\documentclass[a4paper,12pt]{amsart}

\usepackage{amsmath}
\usepackage{amsthm}
\usepackage{amssymb}
\usepackage{amsfonts}
\usepackage{enumerate}
\usepackage{mathrsfs}
\usepackage[shortlabels]{enumitem} 
\usepackage[dvipsnames]{xcolor}
\usepackage[normalem]{ulem}
\usepackage{marginnote}
\usepackage{esint}
\usepackage{hyperref}
\usepackage{pdfpages}
\usepackage{multicol}

\usepackage[alphabetic]{amsrefs}

\calclayout

\theoremstyle{plain}
\newtheorem{theorem}{Theorem}[section]

\newtheorem{lemma}[theorem]{Lemma}
\newtheorem{corollary}[theorem]{Corollary}
\newtheorem{proposition}[theorem]{Proposition}

\newtheorem*{claim*}{Claim}

\newtheorem{maintheorem}{Theorem}

\newtheorem{maincorollary}[maintheorem]{Corollary}

\theoremstyle{definition}
\newtheorem*{definition*}{Definition}
\newtheorem{definition}[theorem]{Definition}
\newtheorem{example}[theorem]{Example}
\newtheorem{question}{Question}

\theoremstyle{remark}
\newtheorem{remark}[theorem]{Remark}

\numberwithin{equation}{section}

\def \B {\mathcal{B}}

\def \R {\mathbb{R}}

\def \E {\mathbb{E}}
\def \H {\mathcal{H}}
\def \M {\mathcal{M}}
\def \N {\mathbb{N}}
\def \P {\mathbb{P}}
\def \U {\mathcal{U}}

\def \Z {\mathbb{Z}}

\def \eps {\varepsilon}

\def \sbs {\subseteq}

\DeclareMathOperator{\dist}{dist}
\DeclareMathOperator{\diam}{diam}

\DeclareMathOperator{\LF}{LF}
\DeclareMathOperator{\TV}{TV}
\DeclareMathOperator{\TC}{TC}

\DeclareMathOperator{\Prob}{Prob}
\DeclareMathOperator{\PART}{PART}

\DeclareMathOperator{\Var}{Var}

\begin{document}
\title{$L_1$ Actions and Embeddings of Property A Spaces}

\begin{abstract}
We provide several new characterizations of Property A for bounded degree graphs. In particular, we show that $(X,d)$ has Property A if and only if there is a proper gauge $\omega$ such that the Lipschitz free space $\operatorname{LF}(X,\omega\circ d)$ is isomorphic to $\ell_1$. As a consequence, all finitely generated groups with Property A admit proper uniformly Lipschitz affine actions on $\ell_1$. Moreover, for groups with finite Nagata dimension, we obtain actions with compression exponent 1. This result applies to higher rank lattices, such as $\operatorname{SL}(3,\mathbb{Z})$. We also show that a countable discrete group coarsely embeds into $L_1$ if and only if it admits a proper uniformly Lipschitz affine action on a subspace of $L_1$.
\end{abstract}


\author{Chris Gartland}
\address{Department of Mathematics and Statistics, University of North Carolina at Charlotte}

\author{Ignacio Vergara}
\address{Departamento de Matemática y Estadística, Universidad de La Frontera}

\author{Tianyi Zheng}
\address{Department of Mathematics, University of California, San Diego}

\thanks{The first named author was supported by the National Science Foundation under Grant Number DMS-2546184. The second named author was supported by the ANID-SIA project 85250079. The third named author was supported by the National Science Foundation under Grant Number DMS-2348143}
	
\keywords{Lipschitz free space, Wasserstein metric, Poincar\'e inequality, expander graphs, Haagerup property}

\subjclass[2020]{20F65 (51F30, 05C48, 46B03)}

\date{June 2026}

\maketitle

\tableofcontents

\section{Introduction} \label{sec:intro}
We say that a connected, bounded degree graph $X$ has {\it Property A} if for every $\eps > 0$, there exist $S<\infty$ and a mapping $\Theta: X \to \Prob(X)$ into probability measures on $X$ such that, for all $x\in X$, the measure $\Theta(x)$ is supported in the ball $B_S(x)$ (where $X$ is equipped with the unweighted shortest path metric), and for all edges $x\sim y\in X$, we have the total variation distance bound $\|\Theta(x)-\Theta(y)\|_{\TV} < \eps$. Property A is a coarse invariant introduced by Yu in \cite{Yu}. He proved that groups with Property A coarsely embed into $L_1$ and that groups coarsely embedding into $L_1$ satisfy the coarse Baum--Connes conjecture\footnote{Yu's results were actually stated for Hilbert space $L_2$ instead of $L_1$, but since $L_2$ and $L_1$ are mutually coarsely embeddable into each other (see \S\ref{ss:Banach}), it is equivalent to use $L_1$.}. The definition of Property A given here is not Yu's original definition but is (by now) well-known to be equivalent to it. We refer the reader to \cite{Willett} for further information on, and many other equivalent characterizations of, this fundamental coarse invariant.

It was an open question for some time whether the converse to Yu's theorem is true -- that is, whether the following question had a positive answer.
\begin{question} \label{q:Yuconverse}
If a connected, bounded degree graph coarsely embeds into $L_1$, must it have Property A?
\end{question}
\noindent It turns out that Question~\ref{q:Yuconverse} has a negative answer. Arzhantseva, Guentner, and Špakula constructed in \cite{Z2homology} a counterexample using a sequence of $\Z_2$-homology covers of graphs. Their example is actually defined as a disjoint union of finite graphs, but it can be modified in order to obtain an infinite connected graph satisfying the same properties. Later, Osajda \cite[\S1.2]{Osajda} constructed a finitely generated group without Property A but with the Haagerup property (a stronger, equivariant version of coarse embeddability into $L_1$). In this article, we are able to recover a complete characterization of Property A in terms of $L_1$-embeddings, in the spirit of Question~\ref{q:Yuconverse}. The idea is to ask for a stronger type of embedding into $L_1$ out of our bounded degree graph. This is facilitated by bringing into the fold the Wasserstein-1 and Lipschitz free spaces. The Wasserstein-1 space $W_1(X)$ is (the completion of) the set of finitely supported probability measures on $X$ equipped with a certain optimal transport metric, see \cite[Part~I, \S6]{Villani} for the precise definition and background. The Lipschitz free space $\LF(X)$ is (the completion of) the vector space of finitely supported 0 total mass signed measures on $X$ equipped with the Kantorovich dual norm $\|\mu\|_{\LF} = \sup\{|\int_Xf d\mu| : f: X\to\R$ is 1-Lipschitz$\}$. The most salient facts to record here about these spaces are that the Dirac delta map $X \ni x \mapsto \delta_x \in W_1(X)$ is an isometric embedding, and, for any fixed reference measure $\mu_0 \in W_1(X)$, the map $W_1(X) \ni \mu \mapsto \mu-\mu_0 \in \LF(X)$ is an isometric embedding. Thus, an embedding of $W_1(X)$ or $\LF(X)$ into $L_1$ is stronger than an embedding of $X$ into $L_1$. Recall that a proper gauge is a concave homeomorphism $\omega: [0,\infty) \to [0,\infty)$. The following theorem is our main result, yielding a complete embedding-theoretic characterization of Property A.

\begin{maintheorem}[Embedding-Theoretic Characterization of Property A] \label{thm:main}
Let $X$ be a connected, bounded degree graph and $d$ the shortest path metric on $X$. Then the following are equivalent:
\begin{enumerate}
    \item $X$ has Property A.
    \item There exists a proper gauge $\omega$ such that $\LF(X,\omega\circ d)$ is isomorphic to $\ell_1$.
    \item There exists a metric space $Y$ coarsely containing $X$ such that $W_1(Y)$ biLipschitzly embeds into $L_1$.
\end{enumerate}
\end{maintheorem}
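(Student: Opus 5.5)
I plan to prove the cycle (1)$\Rightarrow$(2)$\Rightarrow$(3)$\Rightarrow$(1).

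\emph{(2)$\Rightarrow$(3).} Take $Y=(X,\omega\circ d)$. Since $\omega$ is a proper gauge, the identity $(X,d)\to(X,\omega\circ d)$ is a coarse equivalence, so $Y$ coarsely contains $X$. Fix a base point $x_0$. Then $\mu\mapsto\mu-\delta_{x_0}$ is an isometric embedding of $W_1(Y)$ into $\LF(Y)\cong\ell_1$, and $\ell_1$ embeds isometrically into $L_1$. This direction is formal.

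\emph{(3)$\Rightarrow$(1).} Let $f:X\to Y$ be a coarse embedding and $T:W_1(Y)\to L_1$ a biLipschitz embedding. The plan is to exploit the fact that $T\circ\delta\circ f$ extends to a Lipschitz map on all probability measures, so convex combinations of Diracs are controlled. Following Yu's and Nowak's route, I would first pass from $L_1$-valued maps to measures via the cut-cone (or level-set) decomposition: each $L_1$ function $g$ gives the measure $\mathbf 1_{\{(t,s):0<s<g(t)\}}$ on $\Omega\times\R$.

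To verify Property A, one wants, for each scale $S$, maps $x\mapsto\Theta(x)$ that are almost invariant on edges and supported in bounded balls. The idea is to use $W_1$-averages $\nu_{x,R}=\frac1{|B_R(x)|}\sum_{y\in B_R(x)}\delta_{f(y)}$, or heat-kernel-type averages. Their images under $T$ give $L_1$ vectors; since $T^{-1}$ is Lipschitz on the image, distances between these vectors are comparable to $W_1$ distances. These averages alone, however, do not give total-variation almost-invariance.

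So the key idea I expect to need is a characterization of Property A as follows: $X$ has Property A iff for every $\varepsilon$ there exists a map $\Phi:X\to\ell_1$ (or $L_1$) with $\|\Phi(x)\|=1$, $\|\Phi(x)-\Phi(y)\|<\varepsilon$ for edges, and $\Phi(x)$ "localized", i.e.\ lying in the span of features controlled near $x$. I would use the biLipschitz lower bound of $T$ on differences of measures supported far apart, e.g.\ $\delta_{f(x)}$ versus spreads, to force localization. Alternatively, I would argue by contradiction: failure of Property A yields (by known results of Sako/Willett type, or via the ghost-projection/weak-expander characterization) a sequence of finite subsets with a Poincar\'e-type inequality for $L_1$-valued functions that is uniform over measures. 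A biLipschitz embedding of $W_1$ into $L_1$ contradicts this, because $W_1$ of a weak expander-like family has distortion growing (as for Khot--Naor type bounds on $W_1$ of expanders). This contradiction is the main obstacle. Concretely, I would try to show that non-A spaces contain families on which $W_1$ requires unbounded $L_1$ distortion, mimicking the proof that $W_1$ of expanders does not embed into $L_1$, but replacing spectral gap by the measure-valued Poincar\'e inequality that characterizes the failure of A.

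\emph{(1)$\Rightarrow$(2).} I would use Property A to build, for scales $S_k\to\infty$, measures $\Theta_k(x)$ with edge variation $<2^{-k}$. From these I would define a metric $\rho(x,y)=\sum_k c_k\|\Theta_k(x)-\Theta_k(y)\|_{\TV}$ together with small-scale terms. The aim is to show that $\rho$ is comparable to $\omega\circ d$ for a suitable proper gauge, and that $(X,\rho)$ embeds as a tree-like or $\ell_1$-sum structure: $\LF$ of a tree or ultrametric-like space is $\ell_1$ (Godard). Then I would build a complemented embedding into $\ell_1$, using the barycentric maps $\delta_x\mapsto\Theta_k(x)$ to give bounded projections. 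Finally, Pe\l czy\'nski decomposition gives isomorphism with $\ell_1$, since $\LF$ contains complemented $\ell_1$.
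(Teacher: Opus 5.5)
Your cycle is logically fine, and (2)$\Rightarrow$(3) is exactly the paper's formal step. However, both substantive implications have genuine gaps.

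\textbf{(1)$\Rightarrow$(2).} Your $\rho(x,y)=\sum_k c_k\|\Theta_k(x)-\Theta_k(y)\|_{\TV}$ is an $L_1$-metric. So the most this construction yields is a biLipschitz embedding of $(X,\omega\circ d)$ into $L_1$, and nothing in your sketch uses more than that. This cannot suffice. Osajda's non-Property A group has the Haagerup property, hence coarsely embeds into $L_1$, so by the Coarse-to-Lipschitz Dictionary some $(X,\omega\circ d)$ biLipschitzly embeds into $L_1$. Yet by the theorem, no $\LF(X,\omega\circ d)$ is isomorphic to $\ell_1$.

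An ``$\ell_1$-sum of trees'' does not help either: $\LF(\Z\oplus_1\Z)$ does not embed into $L_1$ (Naor--Schechtman). A single tree cannot be comparable to $\omega\circ d$ once the asymptotic dimension exceeds $1$. The tree-likeness of $(X,\rho)$ and the projections induced by $\delta_x\mapsto\Theta_k(x)$ are simply asserted.

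The missing ingredient is \emph{random bounded partitions}: $\Delta(\eps)$-bounded random partitions $P$ with $\P(P(x)\neq P(y))\le\eps\,d(x,y)$. The paper gets these from Elek's local strong hyperfiniteness. Your $\Theta$'s would also work via min-hashing: with i.i.d.\ exponential $E_z$, the map $Z(x)=\operatorname{argmin}_z E_z/\Theta(x)(z)$ has fibres of diameter $\le 2S$ and $\P(Z(x)\neq Z(y))\le 2\|\Theta(x)-\Theta(y)\|_{\TV}$. From there the paper proceeds as follows:
the Rescaling Lemma gives finite separation modulus for some $(X,\omega\circ d)$;
Bartal's hierarchical partitions give a weak-$L_1$ stochastic embedding into ultrametrics;
a further snowflake upgrades this to an $L_1$-stochastic embedding;
finally, Kalton's decomposition, Gartland's complementation result, and $\LF(U)\approx\ell_1^{|U|-1}$ for finite ultrametrics $U$ make $\LF(X,\omega_0\circ d)$ complemented in $\ell_1$, hence isomorphic to $\ell_1$ by Pe\l czy\'nski.

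\textbf{(3)$\Rightarrow$(1).} You guess the right shape (failure of A $\Rightarrow$ a Poincar\'e inequality $\Rightarrow$ unbounded $L_1$-distortion of $W_1$), but you specify neither arrow, and together they are the whole content. Copying the expander argument cannot work. Non-Property A graphs may coarsely embed into $L_1$ (Arzhantseva--Guentner--\v{S}pakula, Osajda), so they contain no expanders, and the inequality must compare measures of large support rather than pairs of points. You are missing the following:
(i) from Elek's theorem (failure of local hyperfiniteness), a F\o{}lner-set-removal argument giving induced subgraphs $H_k$ with $|\partial A|\ge\frac{\eps}{2}|A|$ for $|A|\le k$, and $|\partial A|>|A|^{1-\alpha}$ with $\alpha<\frac12$ for $k<|A|\le\frac12|H_k|$;
(ii) a variance estimate showing $\E|\langle f,u_{\widetilde\Pi(p_k)}-u_{H_k}\rangle|\le\frac{C}{|H_k|}\sum_{x\sim y}|f(x)-f(y)|$ for the Bernoulli$(p_k)$ nonempty point process with $p_k\to0$;
(iii) bounded geometry of the pulled-back semimetric $d_Y(f(x),f(y))$, which forces $\E\|u_{\widetilde\Pi(p_k)}-u_{H_k}\|_{\LF}\ge R_k/3$ with $R_k\to\infty$, because a sparse random set is $R_k$-far from half of $H_k$;
(iv) the Naor--Schechtman linearization, which replaces a biLipschitz map $W_1(H_k)\to L_1$ by a linear $T:\LF(H_k)\to\ell_1^m$ of essentially the same distortion. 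The Poincar\'e inequality can then be applied to the functions representing the coordinate functionals of $T$ and summed, giving $R_k\lesssim\|T\|$.
Without (iv), $T(u_\Pi)-T(u_H)$ is not of the form $\langle f,u_\Pi-u_H\rangle$, and no Poincar\'e inequality applies.
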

In the next subsection, \S\ref{ss:extended}, we state and prove an extended version of Theorem~\ref{thm:main} (Theorem~\ref{thm:mainextended}) that gives many other equivalent properties to the three above, including stochastic embeddings into ultrametric\footnote{Recall that an {\it ultrametric} $d$ on a set is a metric satisfying the stronger triangle inequality $d(x,z) \leq \max\{d(x,y),d(y,z)\}$.} spaces, Poincar\'e inequalities for point processes, and containment of ``subquadratic isoperimetry small-scale expanders". We defer to \S\ref{ss:extended} for further discussion of these terms. Some crucial ingredients in our proof are Elek's alternate characterizations of Property A in terms of hyperfiniteness properties \cite{Elek} (see Theorem~\ref{thm:Elek}). Theorem~\ref{thm:main} follows from Theorem~\ref{thm:mainextended}.

A second motivation for Theorem~\ref{thm:main} comes from group actions on Banach spaces. Every amenable group admits a proper isometric action on $L_p$ for every $p\in[1,\infty)$; see e.g. \cite[Theorem 42]{Now15}. For $1 \leq p \leq 2$, the existence of such an action is known as the Haagerup property or a-T-menability, which has had a tremendous impact on analytic group theory and related areas; see \cite{CCJJV} for details. Since Property A for a group is often thought of as a non-equivariant version of amenability, it is natural to ask what kinds of actions can be obtained from this property. However, as far as we know, this question has not yet been explored in this level of generality. All known results so far assume some additional geometric structure in order to construct actions with a certain desired behavior. Perhaps the most celebrated result in this direction is Yu's construction \cite{Yu2} of proper isometric actions on $\ell_p$ for every hyperbolic group, where $p$ is a sufficiently large number depending on the group; see also \cite{Nic}.

In this paper, we are interested in constructing proper actions on $\ell_1$. Since we want to encompass all groups with Property A, we cannot hope to obtain a general result in terms of isometric actions since this class includes many examples of groups with Property (T), for which all affine isometric actions on a subspace of $L_1$ have a bounded orbit; see \cite[Theorem 1.3]{BaFuGeMo}. However, Theorem \ref{thm:main} has a strong consequence in terms of uniformly Lipschitz actions.

Recall that a uniformly Lipschitz action\footnote{A group action on a metric space is {\it uniformly Lipschitz} if there exists $L<\infty$ such the map $x\mapsto\gamma\cdot x$ is $L$-Lipschitz for every group element $\gamma$.} on a metric space $(X,d_X)$ of a countable discrete group $\Gamma$ equipped with a proper\footnote{A metric on a discrete space is {\it proper} if all its bounded subsets are finite.}, left-invariant metric $d_\Gamma$ is {\it proper} if for some point $x_0\in X$, we have $d_X(x_0,\gamma \cdot x_0) \to \infty$ as $d_\Gamma(e,\gamma)\to\infty$. An immediate consequence of Theorem~\ref{thm:main} is the following result on group actions on $\ell_1$. 

\begin{maintheorem}[Proper Uniformly Lipschitz Affine Action on $\ell_1$]
Let $\Gamma$ be a finitely generated group\footnote{We always equip a finitely generated group $\Gamma$ with a left-invariant word metric $d_\Gamma$ obtained by identifying the group with its Cayley graph with respect to some finite, symmetric generating set.} with Property A. Then $\Gamma$ admits a proper, uniformly Lipschitz affine action on $\ell_1$.
\end{maintheorem}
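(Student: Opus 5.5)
We derive the statement from Theorem~\ref{thm:main}, (1)$\Rightarrow$(2). Take $X$ to be the Cayley graph of $\Gamma$ with word metric $d=d_\Gamma$. Since $\Gamma$ has Property A, there is a proper gauge $\omega$ and an isomorphism $T:\LF(\Gamma,\omega\circ d)\to\ell_1$. The action on $\ell_1$ will be transported by $T$ from a natural affine isometric action of $\Gamma$ on $\LF(\Gamma,\omega\circ d)$.

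\emph{Step 1: the isometric affine action on the free space.} Left multiplication is an isometry of $(\Gamma,\omega\circ d)$, because $d$ is left-invariant. Hence $\pi(\gamma)\mu=(\lambda_\gamma)_*\mu$ is a linear isometry of $\LF(\Gamma,\omega\circ d)$; this follows from the dual description of the norm, since $f\circ\lambda_\gamma$ is $1$-Lipschitz whenever $f$ is. Define the cocycle $b(\gamma)=\delta_\gamma-\delta_e$, which has total mass $0$. Then
\[
\pi(\gamma)b(\eta)+b(\gamma)=\delta_{\gamma\eta}-\delta_\gamma+\delta_\gamma-\delta_e=b(\gamma\eta),
\]
so $\alpha(\gamma)\mu=\pi(\gamma)\mu+b(\gamma)$ is an affine isometric action. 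This is just the action $\mu\mapsto\mu-\delta_e$ conjugating translation on $W_1$ to the free space. The orbit of $0$ satisfies
\[
\|\alpha(\gamma)0\|=\|\delta_\gamma-\delta_e\|_{\LF}=\omega(d(e,\gamma)),
\]
because the Dirac map is isometric. This tends to $\infty$ since $\omega$ is a homeomorphism of $[0,\infty)$, so the action is proper.

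\emph{Step 2: transport to $\ell_1$.} Set $\beta(\gamma)=T\alpha(\gamma)T^{-1}$. This map is affine with linear part $T\pi(\gamma)T^{-1}$ and translation part $Tb(\gamma)$, and it is a group action. Each $\beta(\gamma)$ is $\|T\|\|T^{-1}\|$-Lipschitz, which gives uniform Lipschitzness. For properness, $\|\beta(\gamma)0\|=\|Tb(\gamma)\|\ge\|T^{-1}\|^{-1}\omega(d(e,\gamma))\to\infty$.

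The argument is essentially formal once Theorem~\ref{thm:main} is available; all the difficulty sits in (1)$\Rightarrow$(2). The only points to check are that the isometries and the cocycle extend to the completion and preserve mean-zero measures, and that $\omega\circ d$ is a genuine metric. The latter holds because a concave $\omega$ with $\omega(0)=0$ is subadditive, so $\omega\circ d$ satisfies the triangle inequality.
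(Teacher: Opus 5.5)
Your proof is correct and is the paper's argument. Functoriality of $\LF$ turns the isometric left-translation action on $(\Gamma,\omega\circ d_\Gamma)$ into a proper affine isometric action on $\LF(\Gamma,\omega\circ d_\Gamma)$, which is then conjugated to $\ell_1$ via the isomorphism from (1)$\Rightarrow$(2) of Theorem~\ref{thm:main}; you simply write out the cocycle and conjugation details that the paper delegates to a citation. One small caveat, which the paper shares: (1)$\Rightarrow$(2) needs $\Gamma$ infinite, since a finite-dimensional free space is never isomorphic to $\ell_1$ (compare the hypothesis of Theorem~\ref{thm:mainextended}), but for finite $\Gamma$ properness is vacuous and the trivial action suffices.
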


\begin{proof}
Since the assignment $X \mapsto \LF(X)$ is functorial, whenever $\Gamma$ acts properly and isometrically on $X$, it acts properly by affine isometries on $\LF(X)$ (see \cite[Lemma~6.1]{Ver2} for details). Hence, in this case, if $\LF(X)$ is isomorphic to $\ell_1$, we get a proper, uniformly Lipschitz affine action of $\Gamma$ on $\ell_1$. Since for any proper gauge $\omega$, $\Gamma$ acts properly by isometries on $(\Gamma,\omega\circ d_\Gamma)$, the conclusion follows from Theorem~\ref{thm:main}. 
\end{proof}

At this point, it is natural to ask which groups admit proper, uniformly Lipschitz affine actions on $\ell_1$, or more generally on a subspace of $L_1$. The only obvious obstruction is that the group admits no coarse embedding into $L_1$. Our next theorem shows that this is in fact the only obstruction.

\begin{maintheorem}[Embeddability Implies Equivariant Embeddability] \label{thm:embedding=>equivariantembedding}
Let $\Gamma$ be a countable discrete group equipped with a proper, left-invariant metric. Then $\Gamma$ admits a proper, uniformly Lipschitz affine action on a subspace of $L_1$ if and only if $\Gamma$ coarsely embeds into $L_1$.
\end{maintheorem}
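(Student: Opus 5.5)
The forward direction is formal; the reverse direction is where the work lies. Suppose $\Gamma$ acts on a closed subspace $V\subseteq L_1$ by affine maps $\gamma\cdot v=\pi(\gamma)v+b(\gamma)$, with $\sup_\gamma\|\pi(\gamma)\|\le L$ and $\|b(\gamma)\|\to\infty$ as $d_\Gamma(e,\gamma)\to\infty$. I would show that the orbit map $\gamma\mapsto b(\gamma)=\gamma\cdot 0$ is a coarse embedding. For $\gamma,\eta\in\Gamma$ we have $b(\eta)-b(\gamma)=\pi(\gamma)b(\gamma^{-1}\eta)$, hence
\[
L^{-1}\|b(\gamma^{-1}\eta)\|\le\|b(\eta)-b(\gamma)\|\le L\|b(\gamma^{-1}\eta)\|.
\]
Since $d_\Gamma$ is proper and left-invariant, $\rho_+(t)=\sup\{\|b(g)\|:d_\Gamma(e,g)\le t\}$ is finite, because balls are finite. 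Properness gives $\rho_-(t)=\inf\{\|b(g)\|:d_\Gamma(e,g)\ge t\}\to\infty$. Together with $d_\Gamma(\gamma,\eta)=d_\Gamma(e,\gamma^{-1}\eta)$, these two functions are the required control functions. The choice of basepoint $0$ is harmless, since a uniformly Lipschitz action is proper at one point if and only if it is proper at every point.

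For the converse, let $f:\Gamma\to L_1$ be a coarse embedding. The plan is to replace $\Gamma$ by a Banach space carrying a canonical isometric action, and then to renorm it into $L_1$. Let $\Gamma$ act by left translation on finitely supported zero-mass measures, and let $T$ be the linear map $\delta_x-\delta_y\mapsto f(x)-f(y)$. First, I would replace $d_\Gamma$ by $\omega\circ d_\Gamma$ for a proper gauge $\omega$ dominating the upper control of $f$, so that $f$ becomes $1$-Lipschitz. Then $T$ extends to a bounded map $\LF(\Gamma,\omega\circ d_\Gamma)\to L_1$. The action of $\Gamma$ on $\LF(\Gamma,\omega\circ d_\Gamma)$ by affine isometries, $\gamma\cdot\mu=\gamma_*\mu+(\delta_\gamma-\delta_e)$, is proper, exactly as in the proof of the preceding corollary. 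I would then transport this action to $V=\overline{T(\LF)}$. It suffices to arrange the \emph{uniform translation bound}
\[
\|T(\gamma_*\mu)\|_{L_1}\le C\|T\mu\|_{L_1}\qquad\text{for all }\gamma\in\Gamma\text{ and all finitely supported zero-mass }\mu.
\]
Under this bound, $\pi(\gamma)T\mu=T(\gamma_*\mu)$ is well defined on $T(\LF)$ (apply the bound to $\mu\in\ker T$) and extends to $V$ with norm at most $C$. The cocycle $b(\gamma)=f(\gamma)-f(e)$ is then proper because $f$ is a coarse embedding, so the resulting affine action on the subspace $V$ of $L_1$ is uniformly Lipschitz and proper.

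The main obstacle is producing an $f$ that satisfies the uniform translation bound, since an arbitrary coarse embedding is far from equivariant. The first thing I would try is to symmetrize $f$ at the level of cut measures. A map into $L_1$ corresponds to a measure $\nu$ on cuts of $\Gamma$ with $\|f(x)-f(y)\|=\nu\{A: A\text{ separates }x,y\}$. I would replace $\nu$ by a combination $\sum_g c_g\, g_*\nu$ of its left translates, with weights chosen so that $\gamma_*$ distorts the new measure by at most a bounded factor. Alternatively, I would consider $F(x)=(f(gx))_{g\in\Gamma}$ in an $L_1$-direct sum with suitably chosen weights. The difficulty is that summable weights on an infinite group cannot be uniformly quasi-invariant under all translations. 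I would therefore try to use the freedom to compose with a proper gauge, together with a multi-scale decomposition (a sum over scales $2^k$ of truncated, translated copies of $f$, each with weight $2^{-k}$). The aim is to make the relevant norms comparable uniformly in $\gamma$ at the cost of a worse but still proper compression. This quasi-invariance step is where I expect the real difficulty to lie.
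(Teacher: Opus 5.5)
Your ``only if'' direction is correct and is the direction the paper calls trivial; your cocycle identity $b(\eta)-b(\gamma)=\pi(\gamma)b(\gamma^{-1}\eta)$ supplies the details. The converse, however, is not proved. You reduce it to the uniform translation bound $\|T(\gamma_*\mu)\|_1\le C\|T\mu\|_1$ for all finitely supported zero-mass $\mu$, and then leave that bound open.

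This reduction is essentially equivalent to the statement itself. Suppose $\Gamma$ acts on $E\subseteq L_1$ by $\gamma\cdot v=\pi(\gamma)v+b(\gamma)$ with $\sup_\gamma\|\pi(\gamma)\|\le L$. Then $f:=b$ is a coarse embedding, and $T\mu:=\sum_x\mu(x)b(x)$ satisfies $T(\gamma_*\mu)=\pi(\gamma)T\mu$ for zero-mass $\mu$, so your bound holds with $C=L$. All the content therefore lies in producing such an $f$. The symmetrizations you sketch are not carried out, and they run into the obstruction you yourself identify.

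The bound has to control arbitrary combinations $\sum_i a_if(x_i)$, including their cancellations, and no property of a single embedding of $\Gamma$ gives that. In particular, you cannot get it by making $T$ an isomorphic embedding of $\LF(\Gamma,\omega\circ d_\Gamma)$. By Remark~\ref{rmk:inbetween}, that would force Property~A for finitely generated $\Gamma$, whereas the statement covers, e.g., Osajda's Haagerup groups without Property~A.

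The missing idea is that equivariance is only needed at the level of the kernel, i.e.\ on two-point measures $\delta_x-\delta_y$. Theorem~\ref{thm:AICNDK} (Vergara) turns any proper CND kernel $\Phi$ on $\Gamma$ with $\Phi(gx,gy)\le C\Phi(x,y)$ into a proper uniformly Lipschitz affine action on a subspace of $L_1$. So one never has to define $\pi(\gamma)$ on the span of $f(\Gamma)$.

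Such a kernel comes from the Coarse-to-Lipschitz Dictionary (Proposition~\ref{prop:dictionary}, case $i=3$, $p=1$). The Rescaling Lemma~\ref{lem:rescaling}, applied to $\rho_2\circ\rho_1^{-1}$, gives a proper gauge $\omega$ and a map $f:\Gamma\to L_1$ with $\omega(d_\Gamma(x,y))\le\|f(x)-f(y)\|_1\le C\omega(d_\Gamma(x,y))$. Here the gauge is applied on the target side as well, using that $(L_2,\omega(\|\cdot\|_2))$ biLipschitzly embeds into $L_2\subseteq L_1$. Since $\omega\circ d_\Gamma$ is left-invariant, $\Phi(x,y):=\|f(x)-f(y)\|_1$ is proper, CND, and satisfies $\Phi(gx,gy)\le C\omega(d_\Gamma(x,y))\le C\Phi(x,y)$.

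Your gauge step only yields the upper half of this, namely making $f$ $1$-Lipschitz. The matching lower bound $\|f(x)-f(y)\|_1\gtrsim\omega(d_\Gamma(x,y))$ is exactly what makes the multiplicative comparison possible. Note also that a concave $\omega$ can dominate the upper control of $f$ only if that control grows at most linearly, which need not hold for an arbitrary proper left-invariant metric.
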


One of the main tools we use to prove $(1) \implies (2)$ in Theorem~\ref{thm:main} is a ``coarse-to-Lipschitz dictionary" (Proposition~\ref{prop:dictionary}), by which we use proper gauges to obtain a correspondence between several common notions in coarse and Lipschitz geometry. The proof of Theorem~\ref{thm:embedding=>equivariantembedding} uses this dictionary and a result of Vergara \cite[Theorem 1.1]{Ver} characterizing uniformly Lipschitz affine actions on subspaces of $L_1$ via almost invariant conditionally negative definite kernels (see Theorem~\ref{thm:AICNDK}). We also point out that this result allows one to construct actions with Lipschitz constants arbitrarily close to 1. The proof of Theorem~\ref{thm:embedding=>equivariantembedding} can be found in \S\ref{ss:embedding=>equivariantembedding}.

If $\Gamma$ is a finitely generated group with finite Nagata dimension (see \S\ref{sss:dimension}), a condition stronger than Property A, we can obtain a more quantitative form of properness. Let $(X,d_X)$ be a metric space. Let us say that $\Gamma$ admits uniformly Lipschitz actions on $X$ with {\it compression exponent 1} if for some point $x_0\in X$ and for all $\eps > 0$, there exists a uniformly Lipschitz action of $\Gamma$ on $X$ and $C,K<\infty$ with $d_X(x_0,\gamma \cdot x_0) \geq C^{-1}d_\Gamma(e,\gamma)^{1-\eps}-K$ for all $\gamma\in\Gamma$. If we can find such an action for $\eps=0$, we say that the action has {\it undistorted} or {\it quasi-isometrically embedded} orbit. There are various constructions of uniformly Lipschitz actions with undistorted orbits available in the literature when the group $\Gamma$ exhibits some tree-like geometry (e.g., \cite[\S6]{DrutuMackay}, \cite[Theorem~E]{Gartland}, \cite{Ver2}). It turns out that in the general setting of groups with finite Nagata dimension, the tree-like geometry can be obtained by postcomposing the word metric with a gauge of the form $\omega(t)=t^{1-\eps}$. This tree-like geometry follows from results in \cite{Gartland}, and we are able to obtain the following theorem -- see Corollary~\ref{cor:nearlyundistortedorbit} for the proof.

\begin{maintheorem}[Nearly Undistorted Orbit] \label{thm:nearlyundistortedorbit}
Let $\Gamma$ be a finitely generated group. If $\Gamma$ has finite Nagata dimension, then $\Gamma$ admits uniformly Lipschitz affine actions on $\ell_1$ with compression exponent 1.
\end{maintheorem}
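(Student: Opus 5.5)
The plan is to reproduce the proof of the previous theorem, with the snowflaked gauge $\omega(t)=t^{1-\eps}$ in place of an abstract proper gauge. Fix $\eps\in(0,1)$ and set $\omega_\eps(t)=t^{1-\eps}$, a concave homeomorphism of $[0,\infty)$. The left-invariant word metric $d_\Gamma$ gives a left-invariant metric $d_\eps=\omega_\eps\circ d_\Gamma$. The left translation action of $\Gamma$ on $(\Gamma,d_\eps)$ is isometric.

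By functoriality (as in \cite[Lemma~6.1]{Ver2}), this action induces an affine isometric action of $\Gamma$ on $\LF(\Gamma,d_\eps)$. Its linear part is $\mu\mapsto\gamma_*\mu$ and its cocycle is $\gamma\mapsto\delta_\gamma-\delta_e$. The orbit of $0$ therefore satisfies
\[
\|\gamma\cdot 0\|_{\LF}=\|\delta_\gamma-\delta_e\|_{\LF}=d_\Gamma(e,\gamma)^{1-\eps},
\]
because the Dirac map is isometric. Suppose $T:\LF(\Gamma,d_\eps)\to\ell_1$ is an isomorphism with $\|T\|\|T^{-1}\|\le D_\eps$. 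Conjugating by $T$ gives an affine action on $\ell_1$ with Lipschitz constant at most $D_\eps$, i.e.\ uniformly Lipschitz. Its orbit of $x_0=0$ satisfies $\|\gamma\cdot 0\|\ge \|T^{-1}\|^{-1}d_\Gamma(e,\gamma)^{1-\eps}$. This is exactly the compression-exponent-$1$ requirement, with $K=0$ and $C=\|T^{-1}\|$. The constants may depend on $\eps$, as the definition allows.

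The whole problem therefore reduces to one claim: if $\Gamma$ has finite Nagata dimension, then for every $\eps\in(0,1)$ the space $\LF(\Gamma,d_\Gamma^{1-\eps})$ is isomorphic to $\ell_1$. This is the main obstacle, and I would attack it through the tree-like geometry alluded to after the statement, citing \cite{Gartland}. The expected input is that a metric space of finite Nagata dimension, once snowflaked by $t\mapsto t^{1-\eps}$, admits a stochastic biLipschitz embedding into ultrametrics, or bounded-distortion embeddings into trees. Concretely, one can take a random hierarchical partition with $n$-th level pieces of diameter about $2^n$, constructed from Nagata covers. Padding then holds with Lipschitz-probability of separation, so a pair at distance $t$ is cut at level $n$ with probability $\lesssim t/2^n$. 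In the snowflake metric, the expected ultrametric distortion is
\[
\sum_{n:\,2^n\ge t}\frac{t}{2^n}\,2^{n(1-\eps)}\lesssim_\eps t^{1-\eps},
\]
and the geometric series converges exactly because $\eps>0$. This is the step I would check most carefully.

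Granting this stochastic ultrametric embedding, the Lipschitz free space of each ultrametric space is isometric to a weighted $\ell_1$-sum, hence to $\ell_1$ when the space is countable and discrete. Averaging over the random ultrametrics then yields a complemented embedding of $\LF(\Gamma,d_\eps)$ into an $\ell_1$-sum of copies of $\ell_1$. This is presumably one of the equivalences in the extended Theorem~\ref{thm:mainextended} relating stochastic ultrametric embeddings to $\LF\cong\ell_1$.

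A complemented infinite-dimensional subspace of $\ell_1$ is isomorphic to $\ell_1$ by Pełczyński's theorem. Moreover, $\LF$ of an infinite uniformly discrete space contains complemented $\ell_1$. So $\LF(\Gamma,d_\eps)\cong\ell_1$, which completes the reduction above.

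If the extended theorem instead packages this directly as ``stochastic ultrametric embedding of $(X,\omega\circ d)$ implies $\LF(X,\omega\circ d)\cong\ell_1$'', I would simply cite it with $\omega=\omega_\eps$. In that case the only new input is the $\eps$-dependent geometric series bound from the Nagata-dimension partitions in \cite{Gartland}.
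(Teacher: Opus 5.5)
Your proposal is correct and follows the same route as the paper. The paper takes the isometric affine action on $\LF(\Gamma,d_\Gamma^{1-\eps})$ and cites \cite[Theorem~B]{Gartland} for an $L_1$-stochastic embedding of $(\Gamma,d_\Gamma^{1-\eps})$ into ultrametric spaces. It then applies Theorem~\ref{thm:stochasticembedding=>LF}, which supplies the complementation you asserted (via \cite{Kalton} and \cite{Gartland}, then Pe{\l}czy\'nski), to get $\LF(\Gamma,d_\Gamma^{1-\eps})\cong\ell_1$, and conjugates.

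One small correction: free spaces over ultrametric spaces are only uniformly isomorphic, not isometric, to $\ell_1$-sums; for example, the three-point equilateral space has a hexagonal unit ball. Uniform isomorphism is all your argument needs.
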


This result has the following notable consequence for higher rank lattices such as $\operatorname{SL}(n,\Z)$ ($n\geq 3$).

\begin{maincorollary}\label{cor:higherranklattice}
Let $G$ be a connected, semisimple Lie group of real rank at least 2, and let $\Gamma$ be an irreducible lattice in $G$. Then $\Gamma$ admits uniformly Lipschitz affine actions on $\ell_1$ with compression exponent 1.
\end{maincorollary}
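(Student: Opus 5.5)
The corollary will follow directly from Theorem~\ref{thm:nearlyundistortedorbit}. The plan is to check two facts: that $\Gamma$ is finitely generated, and that $\Gamma$ has finite Nagata dimension with respect to a word metric.

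\emph{Finite generation.} For irreducible lattices in connected semisimple Lie groups of real rank at least $2$, finite generation follows from Kazhdan's Property (T). Strictly speaking, Property (T) needs every simple factor to have rank at least $2$. When some factors have rank one, I would instead invoke Margulis' arithmeticity, together with finite presentability of arithmetic groups (Borel--Serre, Raghunathan).

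\emph{Finite Nagata dimension.} I would follow the standard route that goes through the symmetric space. First, reduce to the case where $G$ has finite center and no compact factors. Passing to the quotient by the center and the compact factors changes $\Gamma$ only by finite kernels and finite-index subgroups. Finite Nagata dimension of a finitely generated group depends only on its quasi-isometry type (Lang--Schlichenmaier), so this reduction is harmless.

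There are then two cases.
\begin{itemize}
\item If $\Gamma$ is cocompact, it is quasi-isometric to the symmetric space $G/K$. This space is a Hadamard manifold of bounded geometry, and such spaces have finite Nagata dimension; more generally, homogeneous Hadamard manifolds have finite Assouad--Nagata dimension (Lang--Schlichenmaier).
\item If $\Gamma$ is non-uniform, Lubotzky--Mozes--Raghunathan show that in higher rank $\Gamma$ is quasi-isometrically embedded in $G$, hence in $G/K$. Finite Nagata dimension passes to subsets, since it is monotone under quasi-isometric embeddings. So $\Gamma$ again has finite Nagata dimension.
\end{itemize}

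Once both facts are in place, applying Theorem~\ref{thm:nearlyundistortedorbit} gives the conclusion.

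The main obstacle is the non-uniform case, because a word metric on $\Gamma$ could a priori be exponentially distorted relative to the metric on $G$. This is exactly where the higher rank hypothesis enters, through Lubotzky--Mozes--Raghunathan, and I would cite that result rather than reprove it. The facts about Nagata dimension of symmetric spaces and its quasi-isometry invariance would be taken from the literature referenced in \S\ref{sss:dimension}.
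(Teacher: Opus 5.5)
Your architecture matches the paper's: embed $\Gamma$ via Lubotzky--Mozes--Raghunathan into an ambient space of finite Nagata dimension, then apply Theorem~\ref{thm:nearlyundistortedorbit}. The gap is in the ambient input, which you treat as off-the-shelf.

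\emph{The general principle you invoke is not established.} The claim that Hadamard manifolds of bounded geometry have finite Nagata dimension is not a theorem. I know of no result giving even finite asymptotic dimension in that generality; it is closely tied to the open problem for CAT(0) groups. Your ``more generally'' also runs backwards, since homogeneous Hadamard manifolds are a special case of bounded geometry.

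\emph{The Lang--Schlichenmaier reference does not cover your case.} This is the only dimension reference in \S\ref{sss:dimension}. Its result treats Hadamard manifolds of \emph{negative} curvature that are homogeneous or pinched, and is proved via Gromov hyperbolicity and the visual boundary. It says nothing about $G/K$ in real rank at least $2$: there $G/K$ contains flat planes and is not hyperbolic. That is exactly the situation of the corollary, so as written your key step is unsupported.

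\emph{The fix, which is what the paper does.} Use Higes--Peng, which gives that the Lie group $G$ itself, with a left-invariant Riemannian metric, has Nagata dimension $\dim(G)<\infty$. Combine this with the Lubotzky--Mozes--Raghunathan biLipschitz embedding $\Gamma\hookrightarrow G$; in the cocompact case this is just \v{S}varc--Milnor. Working directly in $G$ this way needs no case split and no passage to $G/K$.

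\emph{Your preliminary reduction is also misstated.} Take $Z(G)$ infinite, e.g.\ $G=\widetilde{\mathrm{Sp}(4,\R)}$. Borel density makes the normalizer of $\Gamma$ discrete, so $\Gamma\cap Z(G)$ has finite index in $Z(G)$ and is therefore infinite. Then the quotient map $\Gamma\to\Gamma/(\Gamma\cap Z(G))$ has infinite kernel and is not a quasi-isometry. So quasi-isometry invariance of Nagata dimension transfers nothing back to $\Gamma$. Working directly in $G$, as above, sidesteps this.

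\emph{A minor point on finite generation.} A rank-one factor does not by itself destroy Property (T): $\mathrm{Sp}(n,1)$ for $n\geq 2$ and $F_4^{(-20)}$ have it, and only factors locally isomorphic to $\mathrm{SO}(n,1)$ or $\mathrm{SU}(n,1)$ obstruct it. In any case, finite generation of lattices in connected semisimple Lie groups is standard.
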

\begin{proof}
By \cite[Corollary 7.13]{HiPe}, the Nagata dimension of $G$ is exactly $\operatorname{dim}(G)$. Moreover, by \cite[Theorem~A]{LMR}, $\Gamma$ biLipschitzly embeds into $G$. In particular, it has finite Nagata dimension. Thus, $\Gamma$ admits uniformly Lipschitz affine actions on $\ell_1$ with compression exponent 1 by Theorem~\ref{thm:nearlyundistortedorbit}.
\end{proof}

Let $\Gamma$ be a higher rank lattice as in Corollary \ref{cor:higherranklattice}. By Oppenheim \cite{Opp} and de Laat--de la Salle \cite{dLdlS}, $\Gamma$ always has a fixed point when acting by uniformly Lipschitz affine maps on a uniformly convex Banach space, such as $L_p$ for $1<p<\infty$. This should be contrasted with the situation for \emph{isometric} affine actions, where a finitely generated group acts properly on $L_1$ if and only if it acts properly on $L_p$ for $p \in [1,2]$ (see \cite[Theorem 41]{Now15}). In a way, this says that when relaxing the requirement of an affine action from being isometric to being uniformly Lipschitz, $L_p$ retains some rigidity for $p \in (1,\infty)$, while $L_1$ becomes much more flexible.

\subsection{Extended version of Theorem~\ref{thm:main}, proof structure, and outline} \label{ss:extended}

In this subsection, we state the aforementioned extended version of Theorem~\ref{thm:main} and sketch explanations for where the theorems yielding the various implications can be found in the article. In order to understand the theorem statement, the reader might want to review a number of definitions, which we point to next.

Proper gauges are discussed in \S\ref{ss:gauges}.

The definitions of separation modulus and stochastic embeddings into ultrametric spaces can be found in \S\ref{sss:separation} and Definition~\ref{def:stochasticembedding}. They are important notions first developed in theoretical computer science \cite{Bartal} to find approximation algorithms -- see \cite[\S4]{FRT} for some examples.

The Lipschitz free space $\LF(X)$ and Wasserstein-1 space $W_1(X)$ are defined in \S\ref{ss:LF}. The free space $\LF(X)$ is a canonical Banach space into which $X$ isometrically embeds. In the simple case of $X = [0,1]$, it is folklore that $\LF([0,1]) = L_1([0,1])$, and a natural and old problem has been to determine for which $X$ is $\LF(X)$ is isomorphic to or isomorphically embeddable into $L_1$ (for example, \cite[Remark~2]{Pestov}). See \cite{Gartland}, \cite{GO}, and references therein for prior results on this problem and discussion on its importance in theoretical computer science. We believe that Theorem~\ref{thm:mainextended} provides a significant step towards our understanding of this problem.

The Bernoulli$(p)$ nonempty point process $\widetilde{\Pi}(p)$ and its Poincar\'e constant $C(\widetilde{\Pi}(p),H)$ are defined in Definitions~\ref{def:Bernoullinonempty} and \ref{def:Poincare}. In a nutshell, the Bernoulli$(p)$ point process $\Pi(p)$ on a finite graph $H$ is formed by including points into the set $\Pi(p)$ independently and with probability $p$, and $\widetilde{\Pi}(p)$ is obtained from $\Pi(p)$ by conditioning on the event $\Pi(p) \neq \emptyset$. The Poincar\'e constant $C(\widetilde{\Pi}(p),H)$ is defined to be the least constant $C<\infty$ such that
\begin{equation} \label{eq:Poincareintro}
    \E\left[\left|{\textstyle \int_H f du_{\widetilde{\Pi}(p)} -\int_H f du_{H}} \right|\right]\le\frac{C}{|H|}\sum_{x\sim y\in H}\left|f(x)-f(y)\right|
\end{equation}
for all $f:H \to \R$, where $u_X$ denotes the uniform probability measure on a nonempty subset $X\sbs H$, and the sum is over all edges $\{x,y\}\sbs H$, $x\sim y$.

Subquadratic isoperimetry small-scale expander sequences are defined in Definition~\ref{def:almostexpander}. They are defined to be a bounded degree sequence of finite graphs $(H_k)_k$ with linear isoperimetry $|A| \lesssim |\partial A|$ for sets $A\sbs H_k$ up to cardinality $k$ (hence the ``small-scale expander" part of the name), and satisfying $|A| \lesssim |\partial A|^{\beta}$ for some $\beta < 2$ for sets $A\sbs H_k$ of cardinality between $k$ and $\frac{1}{2}|H_k|$ (hence the ``subquadratic isoperimetry" part of the name). This requirement of linear control on the lower part of the isoperimetric profile is precisely {\it non-uniform local amenability}, which was shown to be equivalent to the failure of Property A by Elek \cite{Elek}. The subquadratic control for the larger scale part of the isoperimetric profile is chosen in order to make the implication $(6)\implies(7)$ below hold.

Finally, we may state our extended characterization of Property A.

\begin{theorem}[Extended Characterization of Property A] \label{thm:mainextended}
Let $X$ be an infinite, connected, bounded degree graph and $d$ the shortest path metric on $X$. Then the following are equivalent:
\begin{enumerate}
    \item $X$ has Property A.
    \item There exists a proper gauge $\omega$ such that $(X,\omega\circ d)$ has finite separation modulus.
    \item There exists a proper gauge $\omega$ such that $(X,\omega\circ d)$ admits an $L_1$-stochastic embedding into the class of ultrametric spaces.
    \item There exists a proper gauge $\omega$ such that $\LF(X,\omega\circ d)$ is isomorphic to $\ell_1$.
    \item There exists a metric space $Y$ coarsely containing $X$ such that $W_1(Y)$ biLipschitzly embeds into $L_1$.
    \item $X$ does not contain a sequence of induced subgraphs $(H_k)_k$ such that $|H_k|\to\infty$ and $\sup_k C(\widetilde{\Pi}(p_k),H_k) < \infty$ for some sequence of probabilities $(p_k)_k$ with $p_k\to 0$.
    \item $X$ does not contain a subquadratic isoperimetry small-scale expander sequence of induced subgraphs.
\end{enumerate}
\end{theorem}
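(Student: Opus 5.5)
I will prove the equivalences along the cycle $(1)\Rightarrow(2)\Rightarrow(3)\Rightarrow(4)\Rightarrow(5)\Rightarrow(6)\Rightarrow(7)\Rightarrow(1)$.

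\emph{The Lipschitz-side implications.} For $(1)\Rightarrow(2)$ I will use Elek's characterization (Theorem~\ref{thm:Elek}): Property A gives, for each scale, random partitions into pieces of bounded size whose boundary edges are sparse. Write $n$ for the scale index. Taking a sequence of such partitions at scales $S_n\to\infty$ with error $\eps_n$, I will apply the coarse-to-Lipschitz dictionary (Proposition~\ref{prop:dictionary}). This lets me choose a proper gauge $\omega$ growing slowly enough that, in the metric $\omega\circ d$, the scale-$S_n$ partitions have diameter comparable to $\omega(S_n)$. It should also make the probability that an edge is cut at most $\eps_n \lesssim 1/\omega(S_n)$; Lipschitz-ness along edges suffices because $\omega\circ d$ is a path-type metric up to constants. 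This is exactly a bounded separation modulus at every scale. Then $(2)\Rightarrow(3)$ is the standard construction of hierarchical random ultrametrics from separating decompositions at dyadic scales (Bartal/FRT), with $L_1$-distortion bounded by a sum over scales of the separation bound. For $(3)\Rightarrow(4)$, $\LF$ of an ultrametric (a weighted tree) is isometric to an $\ell_1$ space. The stochastic embedding then gives a complemented embedding of $\LF(X,\omega\circ d)$ into $\ell_1(\ell_1)$ via averaging of the induced linear maps, together with the Lipschitz retractions from the ultrametric back to $X$. Since $X$ is infinite and uniformly discrete, $\LF$ contains $\ell_1$ complementably, and Pełczyński decomposition gives isomorphism to $\ell_1$. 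Next, $(4)\Rightarrow(5)$ takes $Y=(X,\omega\circ d)$, which coarsely contains $X$, because $W_1(Y)$ embeds isometrically into $\LF(Y)$.

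\emph{The obstruction implications.} For $(5)\Rightarrow(6)$ I will argue by contrapositive. Suppose induced subgraphs $H_k$ have uniform Poincaré constants for $\widetilde\Pi(p_k)$. I will build, inside $W_1(Y)$, the measures $u_{\widetilde\Pi(p_k)}$ and $u_{H_k}$, transported to $Y$. A biLipschitz embedding into $L_1$, composed with cut-cone decomposition, lets me apply \eqref{eq:Poincareintro} to each cut function $f$. This bounds the expected $L_1$ distance between $u_{\widetilde\Pi}$ and $u_H$ by $\frac{C}{|H_k|}\sum_{x\sim y}\|\Phi(x)-\Phi(y)\|$, which is $O(\rho(1))$. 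In $W_1$, however, the distance is at least the typical distance from $H_k$ to a sparse random set. Since $p_k\to0$ and the degree is bounded, most points lie at graph distance $\to\infty$ from $\widetilde\Pi$. The coarse containment then makes the $Y$-distance tend to infinity, a contradiction. For $(6)\Rightarrow(7)$, I will show that a subquadratic isoperimetry small-scale expander $(H_k)$ has bounded $C(\widetilde\Pi(p_k),H_k)$ with $p_k\approx 1/k$. Using the co-area formula, I reduce to indicator functions $f=1_A$ and bound $\E|u_{\widetilde\Pi}(A)-|A|/|H|| \lesssim |\partial A|/|H|$. For $|A|\le k$ I use $\P(\Pi\cap A\ne\emptyset)\le p|A|\lesssim |\partial A|/k$ together with the linear isoperimetry. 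For large $A$ I use a variance bound of order $\sqrt{|A|/(p|H|^2)}$, and the exponent $\beta<2$ makes this $\lesssim|\partial A|/|H|$.

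\emph{The main obstacle.} I expect $(7)\Rightarrow(1)$ to be hardest. Failure of Property A gives, by Elek, non-uniform local amenability: subgraphs with linear isoperimetry up to size $k$. I must upgrade these to sets that also have subquadratic isoperimetry at large scales. My first attempt is to take a minimizer of a modified Cheeger-type functional inside a large locally-expanding region. For example, choose $H$ minimizing $|\partial H|/|H|^{1/\beta}$ among sets containing a small-scale expander ball, so that any $A\subseteq H$ violating the large-scale bound could be cut off to improve the ratio. Making this exhaustion argument preserve the small-scale linear isoperimetry, and keep the subgraph induced, is where I anticipate the real work.
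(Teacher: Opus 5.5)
Your cycle of implications is the paper's. $(1)\Rightarrow(2)$, $(3)\Rightarrow(4)$ and $(4)\Rightarrow(5)$ follow its arguments up to small repairs noted at the end, but two links lack a key idea and one is false as stated.

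\textbf{The step $(7)\Rightarrow(1)$ is not carried out.} The suggested route does not work. Minimizing a single ratio such as $|\partial H|/|H|^{1/\beta}$ can be ill-posed: if $X$ also contains large grid-like pieces, adjoining boxes drives the ratio to $0$, since $1/\beta>\frac12$. Even when a minimizer exists, the isoperimetric bounds it inherits degrade with the minimal value. They do not give the uniform bounds $\Phi_H(v)\ge\eps$ for $v\le k$ and $\Phi_H(v)>v^{-\alpha}$ above $k$.

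The missing idea (Lemma~\ref{lem:Folnerremoval}) is to minimize \emph{cardinality} while tracking \emph{non-hyperfiniteness}, which is stable under splitting off sets with small boundary, with a summable scale-dependent loss. Put $\phi(v)=v^{-\alpha}$ and $\delta(v)=\sum_{n\ge0}\phi(2^nv)$, choose $k$ with $\delta(k)<\eps/2$, and take $H_*$ of minimal size among induced subgraphs $H$ with $|H|>k$ that are non-$(\eps-\delta(|H|),k)$-hyperfinite. Minimality and Lemma~\ref{lem:amenable} give $\Phi_{H_*}(v)\ge\eps/2$ for $v\le k$. Now suppose $H\sbs H_*$ had $k<|H|\le\frac12|H_*|$ and $|\partial_{H_*}H|\le\phi(|H|)|H|$. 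Minimality forces $H_*\setminus H$ to be $(\eps-\delta(|H_*|),k)$-hyperfinite. Then Lemma~\ref{lem:nonhyperfinite}, together with $\delta(|H_*|)+\phi(|H|)\le\delta(2|H|)+\phi(|H|)=\delta(|H|)$, puts $H$ in the family, a contradiction.

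\textbf{The step $(5)\Rightarrow(6)$ is missing a linearization.} The Poincar\'e inequality is linear in the measure, whereas a biLipschitz $F\colon W_1(Y)\to L_1$ is not affine. Decomposing $x\mapsto F(\delta_x)$ into cuts only controls $\E\|\sum_x(u_{\widetilde\Pi}-u_H)(x)F(\delta_x)\|_1$. That quantity is not $\E\|F(u_{\widetilde\Pi})-F(u_H)\|_1$, and it has no lower bound in terms of $W_1(u_{\widetilde\Pi},u_H)$. You need the Naor--Schechtman linearization (Lemma~\ref{lem:linearreduction}) to replace $F$ by a noncontractive linear map $T\colon\LF(H_k,d_k)\to\ell_1^m$. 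Then apply the inequality to the functions representing its coordinate functionals.

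\textbf{The step $(2)\Rightarrow(3)$ fails with the same gauge.} The Bartal/FRT sum over dyadic scales diverges: each scale above $d(x,y)$ can contribute order $\sigma d(x,y)$. This cannot be fixed without changing the gauge. Indeed, $(\Z,|\cdot|)$ has finite separation modulus, yet any ultrametric $\rho\ge|\cdot|$ on $\{1,\dots,n\}$ has $\sum_i\rho(i,i+1)\gtrsim n\log n$, because the edges with $\rho\ge r$ cut the path into runs of at most $\lceil r\rceil$ points. So some edge has expected stretch $\gtrsim\log n$. The paper extracts only a \emph{weak}-$L_1$ bound from Bartal's construction (Lemma~\ref{lem:separationmodulusweak}). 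It then passes to the gauge $\sqrt\omega$, which makes the tail integrable (Lemma~\ref{lem:weaksnowflake}).

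\textbf{Smaller, fixable errors.}
In $(6)\Rightarrow(7)$, $p_k\approx1/k$ works only for $\beta\le1$, i.e.\ genuine expanders. The large-set estimate $\sqrt{|A|/p}\lesssim|A|^{1/\beta}$ for all $|A|>k$ needs $p_k\gtrsim k^{1-2/\beta}$, and $p_k\ge8\log|H_k|/|H_k|$ for concentration of $|\widetilde\Pi|$. For $|A|\le k$, the bound $p|A|$ is too large by a factor $p|H|$; the exchangeability identity $\E[u_{\widetilde\Pi}(A)]=|A|/|H|$ suffices instead.
In $(3)\Rightarrow(4)$, $\LF$ of a finite ultrametric is only uniformly isomorphic, not isometric, to $\ell_1^{|U|-1}$. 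Also, stochastic embeddings are given only on finite subsets, so you need Kalton's decomposition of $\LF(X)$ as a complemented subspace of an $\ell_1$-sum of spaces $\LF(X_n)$ with $X_n$ finite.
In $(1)\Rightarrow(2)$, the union bound along geodesics must be done in $d$, not in $\omega\circ d$, which is not quasi-geodesic. The dictionary then transfers the bound via concavity of $\omega$.
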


\begin{proof}~\\
\indent The implication $(1) \implies (2)$ is Theorem~\ref{thm:PropA=>separation}.

The implication $(2) \implies (3)$ is Theorem~\ref{thm:separation=>stochasticembedding}.

The implication $(3) \implies (4)$ is Theorem~\ref{thm:stochasticembedding=>LF}.

The implication $(4) \implies (5)$ follows immediately from the facts that (i) the identity map $(X,d) \to (X,\omega\circ d)$ is a coarse equivalence for any proper gauge $\omega$, (ii) $W_1(Z)$ isometrically embeds into $\LF(Z)$ for any metric space $Z$, and (iii) $\ell_1$ isometrically embeds into $L_1$.

The implication $(5) \implies (6)$ is Theorem~\ref{thm:Poincare=>L1distortion}.

The implication $(6) \implies (7)$ is Theorem~\ref{thm:almostexpander=>Poincare}.

The implication $(7) \implies (1)$ is Theorem~\ref{thm:nonPropA=>almostexpander}.
\end{proof}

\begin{remark} \label{rmk:inbetween}
Any property sitting between any two of the properties $(1)$-$(7)$ above is, of course, equivalent to each property above. For example, the property that there exists a metric space $Y$ coarsely containing $X$ such that $\LF(Y)$ isomorphically embeds into $L_1$ is implied by $(4)$ and implies $(5)$, and is thus equivalent to each of $(1)$-$(7)$.
\end{remark}

\begin{remark}
Theorem~\ref{thm:mainextended} allows us to obtain a new characterization of amenability for finitely generated residually finite groups. Let $\Gamma$ be a residually finite group with finite symmetric generating set $S$. Let $(\Gamma_n)_n$ be a nested sequence of finite index normal subgroups of $\Gamma$ with $\cap_n \Gamma_n = \{1\}$, and equip each group $\Gamma/\Gamma_n$ with the word metric $d_n$ with respect to $S/\Gamma_n$.  Then we obtain the characterization: {\it $\Gamma$ is amenable if and only if there is a proper gauge $\omega$ and constant $C<\infty$ such that $\LF(\Gamma/\Gamma_n,\omega\circ d_n)$ admits a $C$-isomorphic embedding into $L_1$ for every $n$.} Indeed, we first record that $\Gamma$ being amenable is equivalent to a coarse disjoint union $(\square \Gamma,d)$ of $(\Gamma/\Gamma_nd_n)_n$ having Property A (see \cite[Proposition 11.39]{Roe}). By Remark~\ref{rmk:inbetween}, this latter property is equivalent to $\LF(\square \Gamma,\omega \circ d)$ admitting an isomorphic embedding into $L_1$ for some proper gauge $\omega$. It is an elementary fact that $\LF(\square \Gamma,\omega \circ d)$ admits an isomorphic embedding into $L_1$ if and only if there exists $C<\infty$ such that $\LF(\Gamma/\Gamma_n,\omega\circ d_n)$ admits a $C$-isomorphic embedding into $L_1$ for every $n$. This shows the desired equivalence.
\end{remark}

One may compare the equivalence $(1)\iff(6)$ in Theorem~\ref{thm:mainextended} with characterizations of non-coarse embeddability into $L_1$ via Poincar\'e inequalities \cite{Tessera}, \cite[Theorem~7.6]{Ostrovskiibook}.

\subsubsection{Outline}
The outline of the article is as follows:

In \S\ref{sec:prelims}, we review all the preliminary definitions, concepts, and results we need for the article. While the results of this section are generally not new, we often need to synthesize existing results in the literature or slightly modify them, which at times requires a short argument.

In \S\ref{sec:rescaling}, we prove the Rescaling Lemma~\ref{lem:rescaling}, which has the Coarse-to-Lipschitz Dictionary (Proposition~\ref{prop:dictionary}) as a consequence. This dictionary is one of the main ingredients used in both the proof of $(1)\implies(2)$ of Theorem~\ref{thm:mainextended} and in the proof of Theorem~\ref{thm:embedding=>equivariantembedding}.

In \S\ref{sec:equivariantembeddability}, we use the Coarse-to-Lipschitz Dictionary (Proposition~\ref{prop:dictionary}) along with the main result of \cite{Ver} to prove Theorem~\ref{thm:embedding=>equivariantembedding}.

In \S\ref{sec:PropA}, we prove the implications $(1)\implies(2)\implies(3)$ of Theorem~\ref{thm:mainextended}. The first of these implications follows from an application of the Coarse-to-Lipschitz Dictionary (Proposition~\ref{prop:dictionary}) and, importantly, Elek's hyperfiniteness characterization of Property A \cite{Elek} (see Theorem~\ref{thm:Elek}). The second implication follows from a quantitative estimate on the $L_1$-stochastic distortion into ultrametric spaces of snowflakes of finite separation modulus spaces (Theorem~\ref{thm:separation=>stochasticembedding}), a result we believe to be of independent interest.

In \S\ref{sec:nonPropA}, we prove the implications $(5)\implies(6)\implies(7)\implies(1)$ of Theorem~\ref{thm:mainextended}. Elek's hyperfiniteness characterization of Property A again plays an essential role, this time in the proof of $(7)\implies(1)$. The proof of $(5)\implies(6)$ employs a new method of using Poincar\'e inequalities of the form \eqref{eq:Poincareintro} in order to obtain lower bounds on the $L_1$-distortion of Wasserstein-1 spaces.

\section{Preliminaries} \label{sec:prelims}

\subsection{Banach spaces} \label{ss:Banach}
We recall some standard facts about Banach and $L_p$ spaces. The reader may consult \cite{JL,AO} for reference.

Let $L<\infty$. We say that a seminormed\footnote{A {\it seminorm} is a function $\|\cdot\|$ on a vector satisfying all the axioms of a norm except possibly $\|x\|=0 \implies x = 0$.} space $E_1$ {\it $L$-isomorphically embeds} into a normed space $E_2$ if there exists a noncontractive linear map from $E_1$ into $E_2$ with operator norm at most $L$. If the operator is also surjective, we say that $E_1$ is {\it $L$-isomorphic} to $E_2$. When $L=1$, we write {\it isometric} instead of 1-isomorphic. If $E_2$ is a Banach space and $E_1 \sbs E_2$ is a closed subspace, then we say that $E_1$ is {\it $L$-complemented} in $E_2$ if there exists a linear projection $E_2\to E_1$ of norm $\leq L$. In this case, $E_1$ is isomorphic to $E_2 \oplus W$, where $W$ denotes the kernel of the projection.

Let $p\in[1,\infty]$. We write $L_p$ to denote the Lebesgue space $L_p(\R)$ and $\ell_p$ the sequence space. Whenever $\mu$ is a separable purely nonatomic measure, $L_p(\mu)$ is linearly isometric to $L_p$, and so we abuse notation and write $L_p$ for $L_p(\mu)$ in this case. If $\mu$ is purely atomic with countably many atoms, then $L_p(\mu)$ is linearly isometric to $\ell_p$, and so we again abuse notation and write $\ell_p$ for $L_p(\mu)$ in this case. For every measure $\mu$, every separable subspace of $L_p(\mu)$ isometrically embeds into $L_p$. For $m\in\N$, we write $\ell_p^m$ to denote $\R^m$ equipped with the $p$-norm. The spaces $\ell_p$ and $\ell_p^m$ are always isometric to a complemented subspaces of $L_p$. It holds that $L_2$ linearly isometrically embeds into $L_p$ for $p \leq 2$ (e.g., \cite[Corollary~8.8]{BL}), and the metric space $(L_p,\|\cdot\|_p^{p/2})$ isometrically embeds into $L_2$ (e.g., \cite[Remark~5.10]{MNquotients}).

\subsection{Coarse and Lipschitz geometry}
\subsubsection{Embeddings}
Let $(X,d_X)$ and $(Y,d_Y)$ be semimetric\footnote{We define a {\it semimetric} $d$ on a set $Z$ to be a function of two variables satisfying all the axioms of a metric except possibly for $d(x,y) = 0 \implies x=y$.} spaces. A map $f:X\to Y$ is a \emph{coarse embedding} if there are nondecreasing functions $\rho_1,\rho_2:[0,\infty)\to[0,\infty)$ such that $\rho_1(t)\to\infty$ as $t\to\infty$ and
\begin{align*}\label{CE}
	\rho_1(d_X(x,y)) \leq d_Y(f(x),f(y)) \leq \rho_2(d_X(x,y))
\end{align*}
for all $x,y\in X$. If such a map $f$ exists, we say that $Y$ {\it coarsely contains} $X$. The functions $\rho_1$ and $\rho_2$ are referred to as the {\it lower and upper control functions}, respectively. If there are $C<\infty$ and $s\in (0,\infty)$ such that $\rho_1(t) \geq st$ and $\rho_2(t) \leq Cst$ for all $t\geq 0$, then we say that $f$ is a {\it $C$-biLipschitz embedding}, or just {\it biLipschitz embedding} if we don't want to keep track of the constant.

\subsubsection{Dimension} \label{sss:dimension}
Let $n\in\N$. We say that a metric space $X$ has {\it asymptotic dimension $\leq n$}, if there exists a function $R: [0,\infty)\to[0,\infty)$ (which can be taken to be nondecreasing), called the {\it $n$-dimensional control function}, such that for all scales $r<\infty$, there exists a covering $\B$ of $X$ such that
\begin{itemize}
    \item $\diam(B) \leq R(r)$ for every $B\in\B$ and
    \item for every $A \sbs X$ with $\diam(A) < r$, the cardinality of the set $\{B\in\B: B \cap A \neq \emptyset\}$ is at most $n+1$.
\end{itemize}
If the control function can be taken to satisfy $R(r) \leq \gamma r$ for some $\gamma<\infty$, then $X$ is said to have {\it (Assouad)-Nagata dimension $\leq n$}, and the smallest such $\gamma$ is called the {\it Nagata $n$-dimensional modulus} of $X$. The least such $n$ for which $X$ has asymptotic dimension $\leq n$ is called the {\it asymptotic dimension} of $X$, and the least such $n$ for which $X$ has Nagata dimension $\leq n$ is called the {\it Nagata dimension} of $X$. Nagata dimension was introduced in the seminal paper \cite{LS}.

\subsubsection{Separation} \label{sss:separation}
Let $(X,d)$ be a metric space. Given a partition $P$ of $X$, we write $P(x)$ to denote the unique element of $P$ containing $x$. For $\Delta < \infty$, we say that a partition $P$ of $X$ is {\it $\Delta$-bounded} if $\diam(P(x)) \leq \Delta$ for every $x\in X$. By associating a partition with the equivalence relation it generates, we view the set of $\Delta$-bounded partitions of $X$ as a compact subspace of $\{0,1\}^{X\times X}$ in the natural way.

We say that $X$ has \emph{controlled separation} if there is a function $\Delta: (0,\infty) \to [0,\infty)$, called the \emph{separation control function}, such that for every $\eps>0$, there exists a Radon probability measure $\P$ on the set of $\Delta(\eps)$-bounded partitions of $X$ such that, for every $x,y \in M$, $\P_P(P(x)\neq P(y)) \leq \eps d(x,y)$. By a simple compactness argument, it is easy to see that a function $\Delta$ is a separation control function for $X$ if and only if it is a separation control function for every finite subset of $X$. If $\Delta$ can be chosen to satisfy $\Delta(\eps) \leq \frac{\sigma}{\eps}$ for some $\sigma < \infty$, then $X$ is said to have \emph{finite separation modulus}, and the least such $\sigma$ is called the \emph{separation modulus} of $X$. Note that if $X$ is 1-separated, meaning $d(x,y) \geq 1$ for all $x\neq y\in X$, then the inequality is trivially satisfied for $\eps > 1$, and so we can take the domain of $\Delta$ to be $(0,1]$. Also note that if a separation control function exists, it can be taken to be nonincreasing. Separation modulii first appeared in \cite{Bartal}, see \cite[$\S$1.7.3]{Naor} for further discussion and history of this fundamental biLipschitz invariant.

\subsection{Proper gauges} \label{ss:gauges}
Following \cite{Kalton}, we say that a function $\omega: [0,\infty) \to [0,\infty)$ is a {\it gauge} if $\omega$ is concave and $\lim_{t\to0^+}\omega(t)=\omega(0)=0$. Gauges are necessarily continuous, and they satisfy a number of useful inequalities such as
\begin{itemize}
    \item $s \leq t \implies \omega(s) \leq \omega(t)$ (nondecreasing)
    \item $\omega(s+t) \leq \omega(s)+\omega(t)$ (subadditive)
    \item $s\leq 1 \implies \omega(st) \geq s\omega(t)$
    \item $s\geq 1 \implies \omega(st) \leq s\omega(t)$
\end{itemize}
We will be using these inequalities throughout the article. The first two imply that whenever $d$ is a metric on a set $X$ and $\omega$ is a gauge, $\omega \circ d$ is also a metric on $X$ (inducing the same topology as $d$).

We say that a gauge $\omega$ is {\it proper} if $\lim_{t\to\infty}\omega(t) = \infty$. Equivalently, $\omega: [0,\infty)\to[0,\infty)$ is a proper gauge if and only if $\omega$ is a concave homeomorphism. If $(X,d)$ is a metric space $d$ and $\omega$ is a proper gauge, then the identity map between $(X,d)$ and $(X,\omega\circ d)$ is a coarse equivalence (because the map and its inverse are both coarse embeddings). Note that the composition of two (or more) proper gauges is another proper gauge. Examples of proper gauges that we frequently use are the snowflake gauges $t\mapsto t^\alpha$ for $\alpha\in (0,1]$.

\subsection{Property A}
In this subsection, we specialize to the case when $X$ is a connected, bounded degree graph, and the metric $d$ is the shortest path metric on $X$ (in this article, shortest path metrics are always with respect to unit edge lengths). We will employ the following notation and convention throughout the article: a {\it graph} is a set $X$ together with a symmetric relation $\sim$. The {\it edges} of $X$ are the doubleton subsets $\{x,y\}$ with $x\sim y$. The {\it degree} of a vertex $x \in X$ is $|\{y\in X: x\sim y\}|$, and $X$ has {\it bounded degree} if there exists $r\in\N$ such that $x$ has degree at most $r$ for every $x\in X$. The graph $X$ is {\it connected} if the relation $\sim$ generates an equivalence relation with only one equivalence class. The shortest path metric on connected graphs is defined as usual and results in a 1-separated metric space. An {\it induced subgraph} of $X$ is a subset $H\sbs X$ equipped with the restriction of $\sim$ to $H$. A subset $H$ is {\it connected} if the subgraph it induces is connected.

Recall the definition of $X$ having Property A from \S\ref{sec:intro}. When $X$ is the Cayley graph of a group $\Gamma$ with respect to a finite symmetric generating set, we say that $\Gamma$ has Property A if the graph $X$ has Property A. Since any two such Cayley graphs are biLipschitz equivalent and since Property A is inherited coarse embeddings (in particular, under biLipschitz equivalence), this property is well-defined independent of the choice of generating set. Likewise, we say that $\Gamma$ has finite Nagata dimension or $\Gamma$ coarsely embeds into a metric space $Y$ if $X$ has finite Nagata dimension or $X$ coarsely embeds into a metric space $Y$. A connected, bounded degree graph having finite Nagata dimension implies that it has Property A, which in turn implies that it is coarsely embeddable into $L_1$. See \cite{Willett} for these facts (Corollary~2.2.11 and Theorem~1.1.7) and for further background on Property A. There are numerous equivalent characterizations of Property A in the literature. We focus on two that are most relevant for our purposes: local hyperfiniteness and local strong hyperfiniteness. We will generally follow \cite{Elek} in defining these terms. Although the precise definitions we give are not verbatim the same as those given in \cite{Elek}, they are readily seen to be equivalent, see Remark~\ref{rmk:equivalent}.

\begin{definition}[Hyperfiniteness] \label{def:hyperfinite}
Let $\eps>0$, $k\in\N$, and $H$ a finite graph. Let us say that $H$ is {\it $(\eps,k)$-hyperfinite} if there exist subsets $\{A_i\}_i,Y$ of $H$ such that
\begin{itemize}
    \item $\{A_i\}_i$ partitions $H$,
    \item $|A_i| \leq k$ for all $i$,
    \item $|Y| < \eps |H|$, and
    \item for every connected subset $C$ of $H \setminus Y$, there is an $i$ such that $C \sbs A_i$.
\end{itemize}
We say that a bounded degree graph $X$ is {\it locally hyperfinite} if for every $\eps>0$, there exists $k\in\N$ such that every finite induced subgraph of $X$ is $(\eps,k)$-hyperfinite.
\end{definition}

\begin{remark} \label{rmk:equivalent}
Elek uses in \cite{Elek} the following definition of a finite graph $H$ being $(\eps,k)$-hyperfinite: there exists $Y \sbs H$ such that $|Y| < \eps|H|$ and every connected subset of $H\setminus Y$ has cardinality $\leq k$. It is easy to see that this definition of $(\eps,k)$-hyperfinite is implied by the one given in Definition~\ref{def:hyperfinite}.

Conversely, suppose that $H$ is $(\eps,k)$-hyperfinite in the sense used in \cite{Elek} and that every vertex of $H$ has degree at most $r\in\N$. Let $Y \sbs H$ be a subset of minimal cardinality such that every connected subset of $H\setminus Y$ has cardinality $\leq k$. Then it must hold that for every $y \in Y$, there exists $x \in H\setminus Y$ with $x \sim y$. Indeed, we would otherwise have the existence of a $y \in Y$ with $x \not\sim y$ for every $x \in H\setminus Y$, and then the set $Y' := Y \setminus \{y\}$ would have strictly smaller cardinality than $Y$ and would satisfy that every connected subset of $H \setminus Y'$ has cardinality $\leq k$. Let $\{A'_i\}_i$ be the connected components of $H\setminus Y$ (which have cardinality $\leq k$ by assumption), assign to each $y\in Y$ a vertex $x_y \in H\setminus Y$ with $x\sim y$, and set $A_i := \cup_{x_y \in A_i'} \{y\} \cup A_i'$. Then $\{A_i\}_i$ partitions $H$, $|A_i| \leq rk$ for all $i$, $|Y| < \eps|H|$, and every connected subset of $H\setminus Y$ is contained in $A_i$ for some $i$. This shows that $H$ is $(\eps,rk)$-hyperfinite in the sense of Definition~\ref{def:hyperfinite}.

We can conclude that a bounded degree graph is locally hyperfinite in the sense of Definition \ref{def:hyperfinite} if and only if it is locally hyperfinite in the sense used in \cite{Elek}.
\end{remark}

A subset $Y \sbs H$ is called a \emph{$k$-separator} if every connected subset of $H \setminus Y$ has cardinality at most $k$. We say that $H$ is {\it strongly $(\eps,k)$-hyperfinite} if there exists a probability measure $\P$ on the set of $k$-separators of $H$ such that, for every $x \in H$, $\P_Y(x\in Y) \leq \eps$. We say that a bounded degree graph $X$ is {\it locally strongly hyperfinite} if for every $\eps>0$, there exists $k\in\N$ such that every finite induced subgraph of $X$ is strongly $(\eps,k)$-hyperfinite.

The following theorem is the main result of \cite{Elek}.

\begin{theorem}[Hyperfiniteness Characterization of Property A] \label{thm:Elek}
A bounded degree graph has Property A $\iff$ it is locally hyperfinite $\iff$ it is locally strongly hyperfinite.
\end{theorem}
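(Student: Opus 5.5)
The plan is to prove the cycle Property A $\Rightarrow$ locally strongly hyperfinite $\Rightarrow$ locally hyperfinite $\Rightarrow$ locally strongly hyperfinite $\Rightarrow$ Property A. Throughout, $r$ bounds the degree of $X$. The third implication is where the real work lies; the others reduce to averaging, a random-partition construction, or a compactness argument.

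\emph{Strongly hyperfinite $\Rightarrow$ hyperfinite.} Fix a finite induced subgraph $H$ and a random $k$-separator $Y$ with $\P(x\in Y)\le\eps$ for every $x\in H$. Then $\E|Y|\le\eps|H|$, so some $k$-separator has $|Y|\le\eps|H|$. Replacing $\eps$ by $\eps/2$ makes the inequality strict. Remark~\ref{rmk:equivalent} then gives $(\eps,rk)$-hyperfiniteness in the sense of Definition~\ref{def:hyperfinite}.

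\emph{Strongly hyperfinite $\Rightarrow$ Property A.} Apply local strong hyperfiniteness to the balls $B_n(x_0)$. The set of subsets of $X$ is the compact space $\{0,1\}^X$, so a weak$^*$ limit point of the laws of the random separators is a random set $Y\sbs X$ with $\P(x\in Y)\le\eps$. Every connected subset of $X\setminus Y$ has at most $k$ vertices, because being a $k$-separator is witnessed on finite sets and hence is a closed condition. Define
\[
\Theta(x)=\E\big[u_{C_Y(x)}\big],
\]
where $C_Y(x)$ is the component of $x$ in $X\setminus Y$ if $x\notin Y$, and $C_Y(x)=\{x\}$ otherwise. Then $\Theta(x)$ is supported in $B_k(x)$. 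For an edge $x\sim y$, on the event $x,y\notin Y$ the two components coincide, so $\|\Theta(x)-\Theta(y)\|_{\TV}\le 2\P(x\in Y\text{ or }y\in Y)\le 4\eps$.

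\emph{Property A $\Rightarrow$ strongly hyperfinite.} Given $\Theta$ with supports of radius $S$ and edge total-variation distance $<\eps$, I will build a random partition by min-hashing. Take a Poisson process of marks $(z,s)$ on $X\times[0,\infty)$. Let $c(x)$ be the first mark $(z,s)$, in order of $s$, lying under the graph of $\Theta(x)$, i.e.\ with $s\le\Theta(x)(\{z\})\cdot t$ for a time parameter $t$. The standard coupling bound gives $\P(c(x)\neq c(y))\le 2\|\Theta(x)-\Theta(y)\|_{\TV}\le2\eps$ for $x\sim y$.

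The cells $\{x: c(x)=z\}$ lie in $B_S(z)$, so they have at most $r^{S+1}$ points. Let $Y$ be the set of vertices incident to a cut edge. Then $\P(x\in Y)\le 2r\eps$. Moreover each connected subset of $H\setminus Y$ crosses no cut edge, so it lies in a single cell. This gives strong $(2r\eps,r^{S+1})$-hyperfiniteness of every finite induced subgraph.

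\emph{Hyperfinite $\Rightarrow$ strongly hyperfinite (main obstacle).} For a fixed finite $H$, the von Neumann minimax theorem applied to the finite zero-sum game (separator versus vertex) shows the following. $H$ fails to be strongly $(\eps,k)$-hyperfinite if and only if there is a probability measure $\nu$ on $H$ with $\nu(Y)>\eps$ for every $k$-separator $Y$. So it suffices to prove a weighted version of hyperfiniteness: for every probability $\nu$ on a finite induced subgraph there is a $k'$-separator of $\nu$-mass $\le\eps'$.

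The approach I will try first is the layer-cake decomposition $\nu=\int_0^\infty |L_t|\,u_{L_t}\,dt$, with $L_t=\{\nu>t\}$, which reduces to uniform measures on induced subgraphs. I will restrict to dyadic levels $L_j=\{\nu\ge2^{-j}\}$ and the shells $L_j\setminus L_{j+1}$. For each $j$ I will choose an unweighted separator of $L_j$ and take a suitable union, also discarding the shells whose total mass is small.

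The difficulty is that a component of the complement can pass through many shells. To control this I will additionally put the vertex boundaries between consecutive shells into $Y$, or select the cut-levels randomly so as to pay only $\eps$ in expectation. If that fails, the fallback is Elek's original route: from the non-existence of such weighted separators, extract a sequence of finite subgraphs with linear isoperimetry at scale $k$, and show directly that this contradicts local hyperfiniteness.
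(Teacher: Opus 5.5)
The paper gives no proof of this theorem; it quotes it as the main result of Elek \cite{Elek}, so your proposal has to stand on its own. Three of your four implications do:
\begin{itemize}
\item the averaging argument for strongly hyperfinite $\Rightarrow$ hyperfinite;
\item the compactness-plus-component-averaging argument for strongly hyperfinite $\Rightarrow$ Property A;
\item the min-hash argument for Property A $\Rightarrow$ strongly hyperfinite. Here you should state the sampling rule precisely, e.g.\ as the exponential race $c(x)=\arg\min_z E_z/\Theta(x)(\{z\})$ with i.i.d.\ $\mathrm{Exp}(1)$ clocks $E_z$. This gives $\P(c(x)\neq c(y))\le\|\Theta(x)-\Theta(y)\|_{\ell_1}$.
\end{itemize}
Your minimax reduction of strong hyperfiniteness to ``every probability $\nu$ on a finite induced $H$ admits a $k'$-separator of $\nu$-mass $\le\eps'$'' is also correct.

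The gap is that this weighted statement is never proved. It is the whole difficulty, and exactly what \cite{Elek} supplies. You describe an approach you will try and a fallback; the fallback just restates the problem, since extracting unweighted non-amenable subgraphs from a weighted obstruction is the hard part. Moreover, the approach as you set it up does not work:
\begin{itemize}
\item Separators $Y_j$ of super-level sets $L_j$ only satisfy $|Y_j|<\eps|L_j|$. This gives no control of $\nu(Y_j)$, because $Y_j$ may sit on the heaviest vertices.
\item With dyadic shells, neither of your remedies for components crossing shells costs only $O(\eps)$. Let $\nu$ alternate along a long path between two values in ratio $\rho\in(1,2)$. A deterministic dyadic cut can put every vertex on a shell boundary. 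A uniformly random dyadic offset separates each edge with probability $\log_2\rho$, so even the lighter endpoints alone carry expected mass $\log_2\rho/(1+\rho)$. This is a constant independent of $\eps$.
\end{itemize}

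What is missing is a trade-off of scales. Given $\eps$, let $M=e^{r/\eps}$, take $\theta$ uniform in $[0,1)$, and use the coarse shells $S_j=\{x: M^{j+\theta}\le\nu(x)<M^{j+1+\theta}\}$. Let $Y$ consist of:
\begin{itemize}
\item the vertices with $\nu=0$;
\item the lighter endpoint of every edge of $H$ joining two different shells;
\item for each $j$, a separator $Z_j$ of the induced subgraph $S_j$ with $|Z_j|<(\eps/M)|S_j|$ and components of size $\le k:=k(\eps/M)$. Local hyperfiniteness provides this, since $S_j$ is itself a finite induced subgraph of $X$.
\end{itemize}

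A connected subset of $H\setminus Y$ uses no cross-shell edge, so it lies in a single $S_j$ and misses $Z_j$; hence $Y$ is a $k$-separator. Since $\nu$ varies by a factor $<M$ on $S_j$, we get $\nu(Z_j)\le\eps\,\nu(S_j)$. An edge $x\sim y$ with $\nu(y)=\rho\,\nu(x)$, $\rho>1$, is cut with probability $\min\{1,\log_M\rho\}$. Moreover $\nu(x)\min\{1,\log_M\rho\}\le\nu(y)\frac{\ln\rho}{\rho\ln M}\le\frac{\nu(y)}{e\ln M}$. Summing over edges, the expected mass of the removed lighter endpoints is at most $r/(e\ln M)=\eps/e$.

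So some offset yields a $k(\eps e^{-r/\eps})$-separator of $\nu$-mass $<2\eps$, uniformly in $H$ and $\nu$, and your minimax step then gives local strong hyperfiniteness. The two ideas the proposal lacks are:
\begin{itemize}
\item an edge between very different weights is harmless because its lighter endpoint is negligible, while an edge between comparable weights is rarely cut once the shells are coarse;
\item the within-shell variation of $\nu$ is absorbed by invoking hyperfiniteness with parameter $\eps/M$ rather than $\eps$.
\end{itemize}
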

\noindent An exposition and self-contained proofs for other equivalent characterizations of Property A can be found in Elek--Tim\'ar~\cite{Elek-Timar}.

There is a clear relationship between controlled separation and local strong hyperfiniteness. Let $H$ be a finite induced subgraph of a connected  graph $X$. We let $d_X$ denote the shortest path metric on $X$ and equip $H$ with the metric $d_X$. There is a canonical map $\Phi$ from the set of $k$-separators of $H$ to the set of $(k-1)$-bounded partitions of $(H,d_X)$. Indeed, suppose that $Y \sbs H$ is a $k$-separator. Then we define an equivalence relation $\Phi(Y): H \times H \to \{0,1\}$ by $\Phi(Y)(x,y) = 1 \iff$ $x=y$ or $\{x,y\}$ belongs to a connected subset of $H \setminus Y$. Hence, if $x$ is equivalent to $y$ via $\Phi(Y)$, then there exists a path with at most $k-1$ edges connecting $x$ to $y$, which says that the diameter (with respect to $d_X$) of the equivalence class of $x$ is at most $k-1$. This results in the following lemma.

\begin{lemma} \label{lem:controlledseparation}
Let $X$ be a connected, bounded degree graph and $d_X$ the shortest path metric on $X$. If $X$ is locally strongly hyperfinite, then $(X,d_X)$ has controlled separation.
\end{lemma}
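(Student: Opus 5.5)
The plan is to build, for each $\eps\in(0,1]$, a random bounded partition of $X$ from the random $k$-separators that local strong hyperfiniteness gives on finite induced subgraphs. First we work on finite subsets, and then we pass to all of $X$ using the compactness remark in \S\ref{sss:separation}: a function $\Delta$ is a separation control function for $X$ as soon as it is one for every finite subset. So it is enough to produce $\Delta(\eps)$ depending only on $\eps$ and to verify the separation inequality on arbitrary finite $F\sbs X$, with the metric $d_X$ restricted to $F$.

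Fix $\eps\in(0,1]$ and let $r$ bound the degrees of $X$. Local strong hyperfiniteness, applied with parameter $\eps/2$, gives some $k\in\N$. Given a finite $F\sbs X$, enlarge it to a finite induced subgraph $H$ in which pairs of $F$ at bounded distance are joined by geodesics. Let $\P$ be the probability measure on $k$-separators $Y$ of $H$ with $\P_Y(x\in Y)\le \eps/2$, and push it forward by the canonical map $\Phi$ to a measure on $(k-1)$-bounded partitions of $(H,d_X)$. The points of $Y$ themselves are singletons under $\Phi(Y)$, since $\Phi(Y)(x,y)=1$ only if $x=y$ or both lie in a connected subset of $H\setminus Y$. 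Restricting these partitions to $F$ keeps them $(k-1)$-bounded.

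Next we check the separation inequality for $x,y\in F$.
\begin{itemize}
\item If $d_X(x,y)\ge 2/\eps$, then $\eps d_X(x,y)\ge 1$ and there is nothing to prove.
\item If $d_X(x,y)=m<2/\eps$, choose $H$ to contain a geodesic $x=z_0\sim z_1\sim\dots\sim z_m=y$ of $X$; then it is a path in the induced subgraph $H$. If no $z_j$ lies in $Y$, the path is a connected subset of $H\setminus Y$, so $\Phi(Y)(x,y)=1$.
\item A union bound therefore gives $\P(P(x)\ne P(y))\le (m+1)\eps/2\le \eps m$ for $m\ge1$.
\end{itemize}
To arrange the geodesics, take $H$ to be $F$ together with every geodesic between pairs of $F$ at distance $<2/\eps$. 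This set is finite, and the bound on $H$ is uniform because the measure depends only on $\eps$ through $k$. This gives the inequality with $\Delta(\eps)=k-1$, which depends only on $\eps$.

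The main obstacle is the transfer from finite subgraphs to all of $X$. The finite sets are handled by the compactness statement, and its real content is that the partitions live on the compact space of $\Delta$-bounded partitions. On that space a weak-$*$ limit of the measures on an exhausting sequence of finite sets preserves the cylinder-event inequalities $\P(P(x)\neq P(y))\le \eps d(x,y)$. I would state this limiting step explicitly, and then take $\Delta$ nonincreasing by replacing $k(\eps)$ with $\sup_{\eps'\ge\eps}$ over a dyadic grid.
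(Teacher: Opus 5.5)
Your argument is correct and is essentially the paper's proof. You apply local strong hyperfiniteness with parameter $\eps/2$ to get $k$, enlarge the finite set by geodesics, push the separator measure forward under $\Phi$ to $(k-1)$-bounded partitions, and use the union bound $(m+1)\eps/2\le \eps m$ along a geodesic, passing to all of $X$ via the compactness remark. The differences are cosmetic: you add geodesics only for pairs at distance $<2/\eps$, whereas the paper adds them for all pairs (equally finite), and you sketch the weak-$*$ limiting step and the monotonicity of $\Delta$, which the paper takes as standard.
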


\begin{proof}
Assume that $X$ is locally strongly hyperfinite. It suffices to find a function $\Delta: (0,1]\to[0,\infty)$ (as previously mentioned, we may take the domain of $\Delta$ to be $(0,1]$ here since $X$ is 1-separated) such that $\Delta$ is a separation control function on $(H,d_X)$ for every finite subset of $H \sbs X$. Since $X$ is locally strongly hyperfinite, for every $\eps>0$, there exists $k_\eps\in\N$ such that every finite induced subgraph of $X$ is strongly $(\eps/2,k_\eps)$-hyperfinite. We will see that the function $\Delta(\eps) := k_\eps - 1$ works. 

Let $H \sbs X$ be finite. For every $x,y\in H$, choose a geodesic $\gamma_{xy} = \{x = x_0, x_1, x_2, \dots x_n = y\} \sbs X$ with $|\gamma_{xy}| = d_X(x,y)+1$. Let $H' = \cup_{x,y\in H}\gamma_{xy}$. Let $\P$ be a probability measure on the set of $k$-separators of the induced subgraph $H'$ such that $\P_Y(z\in Y) \leq \eps/2$ for every $z \in H'$. Let $\Phi$ be the map from the set of $k_\eps$-separators of $H'$ to the set of $(k_\eps-1)$-bounded partitions of $H'$, as described prior to the statement of the lemma, so that $\Phi_\#\P$ is a probability measure on the set of $\Delta(\eps)$-bounded partitions of $(H',d_X)$. Restricting each partition to $H$ yields a probability measure, that we continue to denote by $\Phi_\#\P$, on the set of $\Delta(\eps)$-bounded partitions of $(H,d_X)$.

Now fix $x\neq y \in H$. Clearly, if $\Phi(Y)(x,y) = 0$, then there exists $z \in \gamma_{xy}$ such that $z \in Y$. Therefore, we have that
\begin{align*}
    (\Phi_\#\P)_P(P(x) \neq P(y)) \leq \P_Y(\cup_{z\in\gamma_{xy}} \{z \in Y\}) \leq |\gamma_{xy}|\frac{\eps}{2} = \frac{\eps}{2}(d(x,y)+1) \leq \eps d(x,y).
\end{align*}
\end{proof}

\subsection{Lipschitz free and Wasserstein-1 spaces} \label{ss:LF}
Let $(X,d)$ be a metric space and $\M_{00}(X)$ the set of finitely supported signed measures on $X$ with vanishing total mass. We define a norm on $\M_{00}(X)$ by $\|\mu\|_{\LF} := \sup\{|\int_X fd\mu|: f:X\to\R$ is 1-Lipschitz$\}$. We call the completion of the resulting normed space the {\it Lipschitz free space} of $X$, denoted by $\LF(X)$. When $\mu,\nu$ are two finitely supported probability measures on $X$, their difference $\mu-\nu$ belongs to $\LF(X)$, and the distance $\|\mu-\nu\|_{\TC}$ is called the {\it Wasserstein-1} distance\footnote{Other common names include {\it Kantorovich metric} and {\it earth mover's distance}.} between $\mu$ and $\nu$, denoted $W_1(\mu,\nu)$. The completion of probability measures under this metric is called the {\it Wasserstein-1 space}, denoted $W_1(X)$. When we wish to emphasize the metric, we will write $\LF(X,d)$ and $W_1(X,d)$. An important fact we shall use is that whenever $Y$ is a metric space and $X \sbs Y$, the natural inclusion $W_1(X) \sbs W_1(Y)$ is an isometric embedding. This is a consequence of the {\it McShane extension theorem} \cite[Theorem~1.33]{Weaver}, which allows one to extend any 1-Lipschitz function $X\to\R$ to a 1-Lipschitz function $Y\to\R$. See \cite[Chapter~3]{Weaver} for more background on free spaces (and note that they are referred to as \emph{Arens--Eels spaces} in that text, and in other places {\it Kantorovich--Rubinstein normed spaces} and {\it transportation cost spaces}).

We also need to consider Lipschitz free and Wasserstein-1 spaces over semimetric spaces. Let $X$ be a set equipped with a semimetric $d$. We define the 1-Lipschitz functions as usual: $f: X \to \R$ is 1-Lipschitz if $|f(x)-f(y)| \leq d(x,y)$ for all $x,y\in X$. The most important examples of 1-Lipschitz functions we need are distance-to-sets functions: $X \ni x \mapsto \dist(A,x) \in \R$, where $A \sbs X$ and $\dist(A,x) := \inf\{d(a,x): a\in A\}$. This function is 1-Lipschitz simply by the triangle inequality of $d$, positive definiteness is not required.

We require the following reduction to linear maps, which was essentially proved by Naor--Schechtman in \cite[Lemma~3.1]{NS}, in order to study embedding of Wasserstein metrics into $L_1$. When $Z$ is a semimetric space, we call the inifimal $L$ for which there exists an $L$-biLipschitz embedding $Z\to L_1$ the {\it $L_1$-distortion} of $Z$. When $E$ is a seminormed space, we call the inifimal $L$ for which there exists $m\in\N$ and an $L$-isomorphic embedding $E\to\ell_1^m$ the {\it linear $\{\ell_1^m\}_{m=1}^\infty$-distortion} of $E$.

\begin{lemma}[Reduction to Linear Maps] \label{lem:linearreduction}
For every finite semimetric space $X$, the $L_1$-distortion of $W_1(X)$ is equal to the linear $\{\ell_1^m\}_{m=1}^\infty$-distortion of $\LF(X)$.
\end{lemma}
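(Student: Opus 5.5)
The statement has two directions, and I will prove each separately. The easy one is that the $L_1$-distortion of $W_1(X)$ is at most the linear $\{\ell_1^m\}$-distortion of $\LF(X)$. Suppose $T:\LF(X)\to\ell_1^m$ is linear, noncontractive, and has norm $\leq L$. Fix a reference measure $\mu_0$. Then $\mu\mapsto T(\mu-\mu_0)$ is an $L$-biLipschitz map from $W_1(X)$ into $\ell_1^m\subset L_1$, because $\mu-\nu\in\LF(X)$ and $\|\mu-\nu\|_{\LF}=W_1(\mu,\nu)$. In the semimetric case nothing changes: the seminorm on $\LF(X)$ and the semimetric on $W_1(X)$ are still related by that same identity.

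For the reverse direction, let $f:W_1(X)\to L_1$ satisfy $W_1(\mu,\nu)\le\|f(\mu)-f(\nu)\|_1\le L\,W_1(\mu,\nu)$. I will use the Naor--Schechtman averaging argument, which builds a linear map out of $f$ in two moves.

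\textbf{Step 1: pass to the cone.} Write $X=\{x_1,\dots,x_n\}$. Then $W_1(X)$ is the simplex $\Delta$ of probability vectors, and $\LF(X)$ is the hyperplane $V=\{v\in\R^n:\sum v_i=0\}$ with norm $\|\cdot\|_{\LF}$. Fix the barycenter $\mu_0$, which lies in the relative interior of $\Delta$, so a small ball $\mu_0+\eps B_V$ is contained in $\Delta$. Define $g_\eps(v)=\eps^{-1}(f(\mu_0+\eps v)-f(\mu_0))$ for $v$ in a bounded set. Each $g_\eps$ is still $L$-biLipschitz with respect to $\|\cdot\|_{\LF}$, by homogeneity of that norm.

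\textbf{Step 2: linearize.} Here there are two options. One is an ultraproduct/Lipschitz-free argument: take a limit of the $g_\eps$ to get a biLipschitz map on all of $V$ into an ultrapower of $L_1$, which is again an $L_1(\mu)$-space, and then linearize it with a differentiation argument. The other, which I expect to follow, is the Naor--Schechtman argument, which relies on the finite dimensionality and convexity of $V$ together with a translation-averaging trick. Concretely, choose a lattice-like grid in $V$ or use an invariant mean: define $T(v)$ as a weak limit of averages over $u$ of $g(u+v)-g(u)$, taken over large balls with normalized Lebesgue measure, with the resulting function space embedded in $L_1(\text{ball}\times\Omega)$. Translation averaging makes $T$ additive, hence linear on the finite-dimensional $V$, so $\|T\|\le L$ holds automatically. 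For the lower bound, $\|T v\|\ge\|v\|$ comes from the fact that each individual difference satisfies $\|g(u+v)-g(u)\|\ge\|v\|$. To be safe, I will average the $L_1$-valued function in the space $L_1(\text{ball}\times\Omega)$ instead of averaging vectors, i.e.\ set $Tv=(u,\omega)\mapsto (g(u+v)-g(u))(\omega)$ normalized. This gives the exact lower bound, with boundary errors vanishing as the ball grows, and linearity only in the limit. The main obstacle is making this limit genuinely linear while preserving both bounds. I will handle it with an ultralimit of these maps: linearity of the ultralimit follows from the boundary-error estimate $O(\|v\|/R)$ of translation-averages over balls of radius $R$.

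\textbf{Step 3: return to $\ell_1^m$.} The result is an $L$-isomorphic embedding of the finite-dimensional space $V$ into an $L_1(\mu)$-space. Every finite-dimensional subspace of $L_1$ $(1+\delta)$-embeds into $\ell_1^m$ for some $m$, which follows by approximating with simple functions. So the linear $\{\ell_1^m\}$-distortion of $\LF(X)$ is at most $L(1+\delta)$ for every $\delta$, hence at most the $L_1$-distortion of $W_1(X)$, and equality follows from the two inequalities. If $d$ is only a semimetric, I will first quotient out the kernel of the seminorm. That kernel is exactly the span of $\delta_x-\delta_y$ with $d(x,y)=0$, and both sides factor through the metric quotient.
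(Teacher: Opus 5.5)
Your easy direction, the blow-up in Step~1, Step~3, and the quotient by $\{d=0\}$ are fine. The quotient reduction is in fact all the paper proves, since it cites Naor--Schechtman and Gartland--Ostrovska for the metric case. Everything therefore rests on your Step~2, and there is a genuine gap there: you conflate two different averages.

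\textbf{The vector average.} The average $A_R(v)=\frac{1}{|B_R|}\int_{B_R}(g(u+v)-g(u))\,du$ is additive up to $O(L\|v\|\|w\|/R)$. However, its lower bound does not follow from $\|g(u+v)-g(u)\|_1\ge\|v\|$. The triangle inequality only gives $\|A_R(v)\|_1\le\frac{1}{|B_R|}\int_{B_R}\|g(u+v)-g(u)\|_1\,du$, which is the wrong direction.

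\textbf{The ``safe'' version.} The map $T_R(v)(u,\omega)=(g(u+v)-g(u))(\omega)$ in normalized $L_1(B_R\times\Omega)$ does have exact two-sided bounds. But the boundary estimate only yields the cocycle relation $T_R(v+w)\approx\tau_wT_R(v)+T_R(w)$, where $\tau_w$ is translation in the $u$-variable. The actual additivity defect is
\[
\|T_R(v+w)-T_R(v)-T_R(w)\|=\frac{1}{|B_R|}\int_{B_R}\|g(u+v+w)-g(u+v)-g(u+w)+g(u)\|_1\,du,
\]
an averaged second difference that nothing in your argument controls. It need not be small. Take the two-point space with $d=1$, so $W_1(X)\cong[0,1]$, and the isometric embedding $f(t)=\mathbf{1}_{[0,t]}\in L_1[0,1]$. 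Then $g_\eps(u+v)-g_\eps(u)=\eps^{-1}\mathbf{1}_{(1/2+\eps u,\,1/2+\eps(u+v)]}$, and for $v\ge w>0$ the defect equals $2w$ for every $\eps$ and $R$. So no ultralimit of your maps is linear.

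\textbf{What is missing.} You need a genuine differentiation step. This is nontrivial precisely because $L_1$ lacks the Radon--Nikod\'ym property, and it is the content of the Naor--Schechtman lemma the paper cites.

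One way to repair your averaging idea is to average derivatives instead of increments:
\begin{enumerate}
\item Regard $L_1[0,1]\subset\mathcal{M}[0,1]=C[0,1]^*$. Then $g$ is weak$^*$-differentiable almost everywhere, by applying Rademacher to $\langle g,\phi\rangle$ for $\phi$ in a countable dense subset of $C[0,1]$. The derivatives $T_z$ are linear with norm at most $L$.
\item Send $h$ to the measure on $B_R\times[0,1]$ with disintegration $|B_R|^{-1}\,T_zh\,dz$. Its total variation is $|B_R|^{-1}\int_{B_R}\|T_zh\|\,dz$, and it lies in an abstract $L$-space.
\item This map is exactly linear in $h$ with norm at most $L$.
\item The lower bound $\|h\|\bigl(1-O(L\|h\|/R)\bigr)$ follows from $\|g(z+h)-g(z)\|_1\le\int_0^1\|T_{z+sh}h\|\,ds$ for almost every $z$, together with your $O(\|h\|/R)$ boundary estimate.
\end{enumerate}
Your Step~3 then finishes the argument.
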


\begin{proof}
Let $(X,d)$ be a finite semimetric space. The case where $X$ is a metric space was essentially proved in \cite[Lemma~3.1]{NS}, and one can find a more general statement and proof in \cite[\S5]{GO}. We will explain how to reduce to the metric case.

Let $\sim$ denote the equivalence relation on $X$ given by $x\sim y$ if $d(x,y)=0$. Then $d$ descends to a metric on $X/\sim$. The canonical projection $X \to X/\sim$ induces surjective isometries $\LF(X) \to \LF(X/\sim)$ and $W_1(X) \to W_1(X/\sim)$, with the former map being linear. Hence, the linear $\{\ell_1^m\}_{m=1}^\infty$-distortion of $\LF(X)$ is the same as that of $\LF(X/\sim)$, and the $L_1$-distortion of $W_1(X)$ is the same as that of $W_1(X/\sim)$. Since these two quantities agree on $X/\sim$ by \cite[\S5]{GO}, they agree on $X$.
\end{proof}

We need one more basic fact. Suppose that $X$ is a finite semimetric space, $\lambda: \LF(X) \to \R$ is a linear functional, and $x_0\in X$ is some chosen basepoint. Define $f: X\to\R$ by $f(x) := \lambda(\delta_x-\delta_{x_0})$. Then for any $\mu \in \LF(X)$, we have that $\lambda(\mu)=\int_X fd\mu$. This follows from the fact the two sides of the equation agree when $\mu = \delta_x-\delta_{x_0}$ for some $x\in X$, and signed measures of this form clearly span $\LF(X)$. We say that $f$ {\it represents} the functional $\lambda$.

\subsection{Stochastic embeddings} \label{ss:stochasticembeddings}
Suppose $(X,x_0)$ and $(Y,y_0)$ are pointed sets. Following \cite[\S3.1]{Gartland}, a \emph{random map} (of finite type) from $X$ to $Y$ is a collection of maps $\phi = \{\phi_i: M \to N\}_{i=1}^k$ equipped with some probability measure $\P$ such that $\phi_i(x_0) = y_0$ for all $i\in\{1,\dots k\}$. If $X,Y$ are equipped with metrics $d_X,d_Y$, then a random map is {\it noncontractive} if, for every $i\in\{1,\dots k\}$ and $x,y \in X$, $d_Y(\phi_i(x),\phi_i(y)) \geq d_X(x,y)$.

We define the \emph{stochastic distortion function} of a random noncontractive map $(\phi,\P)$ to be the function $D_\phi: X\times X \times [1,\infty) \to [0,1]$ defined by
\begin{equation*}
    D_\phi(x,y)(t) := \P(\{\phi_i:d_Y(\phi_i(x),\phi_i(y)) \geq td_X(x,y)\}).
\end{equation*}
For $p \in (0,\infty)$, the \emph{$L_p$-stochastic distortion} of the random noncontractive map $(\phi,\P)$ is the quantity
$$\|\phi\|_p^p := \sup_{x,y\in X}\int_{0}^\infty pt^{p-1}D_\phi(x,y)(t)dt,$$
and the \emph{weak-$L_p$-stochastic distortion} is the quantity
$$\|\phi\|_{w,p}^p := \sup_{x,y\in X}\sup_{t\geq 1}t^pD_\phi(x,y)(t).$$
We say that a random noncontractive map between metric spaces is an {\it $L_p$-stochastic embedding} if it has finite $L_p$-stochastic distortion and a {\it weak-$L_p$-stochastic embedding} if it has finite weak-$L_p$-stochastic distortion.

Clearly, we have that if $(\phi,\P)$ is a random noncontractive map, then $\|\phi\|_1$ is equal to the least constant $L$ such that
\begin{equation} \label{eq:stochasticembedding}
    \E[d_Y(\phi(x),\phi(y))] = \sum_i d_Y(\phi_i(x),\phi_i(y))\P(\phi_i) \leq Ld_X(x,y)
\end{equation}
for all $x,y\in X$. Random noncontractive maps satisfying the above inequality have been called  {\it stochastic embeddings} or {\it stochastic biLipschitz embeddings}, and $L$ is called the {\it stochastic distortion} of $\phi$. Thus, \eqref{eq:stochasticembedding} says that the stochastic distortion of $\phi$ is the same as its $L_1$-stochastic distortion $\|\phi\|_1$. Stochastic embeddings were introduced in \cite{Bartal} (not under this name), and their relationship to embeddings of Wasserstein-1 spaces into $L_1$ was realized in \cite{Charikar} (see also \cite{IT03}).

Before proceeding, it will be useful to reorient our view of random maps into metric spaces. Suppose $\mathcal{C}$ is a class of finite, pointed metric spaces with the following closure property: for any finite collection $\{Y_i\}_{i=1}^k \sbs \mathcal{C}$, there exist $Y \in \mathcal{C}$ and, for each $1\leq i\leq k$, a basepoint-preserving isometric embedding $Y_i \hookrightarrow Y$. If $X$ is a finite metric space, we define a {\it random map into the class $\mathcal{C}$} to be a collection of maps $\phi = \{\phi_i: X \to Y_i\}_{i=1}^k$ equipped with some probability measure $\P$, where each $Y_i$ is an element of $\mathcal{C}$. Note that the target spaces are allowed to be variable and that there is no basepoint-preservation requirement. Noncontractivity of $\phi$ and its stochastic distortion function $D_\phi$ are defined the same way as before. Due to the assumption on the closure properties of $\mathcal{C}$, we can see that for any random noncontractive map $\phi$ from $X$ into $\mathcal{C}$, if we equip $X$ with any basepoint, then there exists $Y\in\mathcal{C}$ and a random (basepoint-preserving) noncontractive map $X \to Y$ with the same distortion function as $\phi$. Hence, in the case when this closure property is satisfied, there is no loss in generality of allowing variable target spaces and disregarding basepoint-preservation. The class of finite, pointed ultrametric spaces obviously has the closure property, because the $\ell^\infty$-product of a finite collection of finite, pointed ultrametric spaces is again a finite, pointed ultrametric space, and it admits the obvious basepoint-preserving isometric embeddings from its factors. The only random maps we will consider in this article are those into the class of ultrametric spaces.

\begin{definition}[$L_1$-Stochastic Embedding into Ultrametric Spaces] \label{def:stochasticembedding}
We say that a metric space $X$ admits an {\it $L_p$-stochastic embedding (resp. weak-$L_p$-stochastic embedding) into the class of ultrametric spaces} if there exists $D<\infty$ such that every finite subset of $X$ admits an $L_p$-stochastic embedding (resp. weak-$L_p$-stochastic embedding) into the class of ultrametric spaces with $L_p$-stochastic distortion (resp. weak-$L_p$-stochastic distortion) $\leq D$.
\end{definition}

The following two results can be quickly deduced by combining results from \cite{Kalton} and \cite{Gartland}).

\begin{lemma} \label{lem:LF1}
Let $X$ be a finite metric space that stochastically embeds into the class of ultrametric spaces with $L_1$-stochastic distortion $\leq D$. Then $\LF(X)$ is $D'$-isomorphic to a $C'$-complemented subspace of $\ell_1$, where $D',C'<\infty$ depend only on $D$.
\end{lemma}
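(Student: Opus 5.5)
The plan is to pass from the random map to a single ultrametric target, use the known structure of free spaces over finite ultrametrics, and then average. First, using the closure property of the class of finite pointed ultrametric spaces discussed in \S\ref{ss:stochasticembeddings}, we may assume all the maps $\phi_i$ land in one finite pointed ultrametric space $(U,\rho)$ and preserve basepoints. Each $\phi_i$ is noncontractive, and
\[
\sum_i \P(\phi_i)\,\rho(\phi_i(x),\phi_i(y)) \leq D\, d(x,y)
\]
for all $x,y\in X$.

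The first ingredient is the standard fact that for a finite ultrametric space (equivalently, a finite weighted tree metric, where the ultrametric is realized as leaves of a rooted tree with edge weights) the free space is isometric to $\ell_1^m$. This comes from \cite{Kalton} / \cite{Gartland}. Concretely, $\mu\mapsto(w_e\,\mu(\text{subtree below } e))_e$ is an isometry of $\LF$ of a tree onto $\ell_1(\text{edges})$, and an ultrametric is biLipschitz, with constant $2$, to such a tree metric on its leaves. Thus $\LF(U)\cong\ell_1^{m}$ with a universal isomorphism constant.

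Next, define $T:\LF(X)\to\big(\bigoplus_i \LF(U)\big)_{\ell_1}$ by $T\mu=(\P(\phi_i)\,(\phi_i)_\#\mu)_i$.

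\begin{itemize}
\item \textbf{Upper bound.} For $\mu=\delta_x-\delta_y$ we get $\|T\mu\|=\E\rho(\phi(x),\phi(y))\le D\,d(x,y)$. Since the free norm is the largest norm with this property on the generators (extend via optimal transport decompositions), $\|T\|\le D$.
\item \textbf{Lower bound.} Given a $1$-Lipschitz $f$ on $X$ norming $\mu$, noncontractivity makes $f\circ\phi_i^{-1}$ $1$-Lipschitz on $\phi_i(X)$. Extend it by McShane to $g_i$ on $U$; then $\sum_i g_i$ paired against $T\mu$ gives $\int f\,d\mu$, so $\|T\mu\|\ge\|\mu\|$.
\end{itemize}

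Hence $T$ is a $D$-isomorphic embedding into an $\ell_1$-sum of $\ell_1^m$'s, which is itself $\ell_1^N$ up to a universal constant.

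The main obstacle is complementation. I would build a projection $P$ onto $T(\LF(X))$ by a left inverse $S$ with $ST=\mathrm{id}$ and $\|S\|\lesssim 1$, so that $P=TS$ has norm $\lesssim D$. The difficulty is that $\phi_i$ need not be injective on $U$ outside $\phi_i(X)$, so $S$ must push measures on $U$ back to $X$. I would first retract $\LF(U)$ onto $\LF(\phi_i(X))$. For ultrametrics there is a $1$-Lipschitz retraction $r_i$ of $U$ onto any subset, namely sending a point to a nearest point of $\phi_i(X)$, which is $1$-Lipschitz in ultrametrics. Then map back by $\phi_i^{-1}$, which is $1$-Lipschitz from $(\phi_i(X),\rho)$ to $(X,d)$ by noncontractivity. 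Set
\[
S(\nu_i)_i=\sum_i(\phi_i^{-1}\circ r_i)_\#\nu_i .
\]
Linearized $1$-Lipschitz maps have norm $\le1$, giving $\|S\|\le1$, and $ST\mu=\sum_i\P(\phi_i)\mu=\mu$. This yields $C'\lesssim D$ and $D'\lesssim D$, with constants depending only on $D$. The point to verify carefully is the claim that the nearest-point retraction is $1$-Lipschitz in an ultrametric, handling ties consistently. If that fails, I would fall back on a tree-level retraction, collapsing subtrees, at the cost of a constant factor.
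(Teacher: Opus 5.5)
Your proof is correct, and it is more self-contained than the paper's. The paper constructs nothing explicitly. It quotes \cite[Corollary~3.2]{Gartland}, which is stated for stochastic embeddings into spaces of finite Nagata dimension, to get $\LF(X)$ as a complemented subspace of $L_1(\Omega;\LF(U))$; this is your $(\bigoplus_i\LF(U))_{\ell_1}$. It then quotes \cite[Corollary~2.5]{DKO} for $\LF(U)\simeq\ell_1^{|U|-1}$ with constant $8$. You instead write down the embedding $T$ and a left inverse $S$ with $\|S\|\le 1$. This proves the ultrametric case of that corollary by hand and gives $D',C'=O(D)$.

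The tie-breaking issue you flag resolves as you expect. Order $A=\phi_i(X)$ and let $r(u)$ be the least nearest point of $A$ to $u$. If $\rho(u,v)\ge\max\{\dist(u,A),\dist(v,A)\}$, the ultrametric inequality gives $\rho(r(u),r(v))\le\rho(u,v)$. If instead, say, $\rho(u,v)<\dist(u,A)$, then $\rho(v,a)=\rho(u,a)$ for every $a\in A$, so $r(u)=r(v)$.

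The retraction is not actually needed, though. Each $\phi_i$ is injective and $\phi_i(X)$ is itself a finite ultrametric space. So you may take $\phi_i(X)$ as the $i$-th target and set $S((\nu_i)_i)=\sum_i(\phi_i^{-1})_\#\nu_i$ directly. This also removes the reduction to a single space $U$. Your McShane lower bound is redundant in either version, since $\|\mu\|=\|ST\mu\|\le\|T\mu\|$.

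The one step whose justification does not work as written is $\LF(U)\simeq\ell_1^m$ with a universal constant. Realizing $U$ as the leaf set of a weighted tree is even isometric: use edge weights $(\Delta(w)-\Delta(w'))/2$, where $\Delta$ is the diameter of the ball a vertex represents. But it only places $\LF(U)$ isometrically inside $\ell_1(\text{edges})$ as a subspace of smaller dimension. It gives neither an isomorphism onto $\ell_1^{|U|-1}$ nor complementation. To reach $\ell_1^{|U|-1}$ you need a comparable tree metric whose vertex set is $U$ itself, i.e.\ a Steiner point removal step, which is what underlies \cite[Corollary~2.5]{DKO}. You should cite that, as the paper does.

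If you prefer to stay elementary, complementation alone suffices for this lemma. Replace $\rho$ by the $2$-equivalent ultrametric $2^{\lceil\log_2\rho\rceil}$. Then each child ball has at most half its parent's diameter, so every edge from a vertex $w$ to a child $w'$ has length $(\Delta(w)-\Delta(w'))/2\ge\Delta(w)/4$. Sending each internal vertex to any leaf below it is therefore a $4$-Lipschitz retraction of the tree onto $U$. This gives a $4$-complemented copy of $\LF(U)$ in $\ell_1(\text{edges})$, which is enough to finish your argument.
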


\begin{proof}
Since ultrametric spaces have Nagata dimension 0 with constant $\gamma = 1$ (this follows quickly from the definitions -- one can take all balls of radius $r$ as the cover $\B$), we have by \cite[Corollary~3.2]{Gartland} that $\LF(X)$ is $D$-isomorphic to a $C$-complemented subspace of $L_1(\Omega;\LF(U))$ for some finite ultrametric space $U$, where $\Omega$ is the finite probability space $(\{1,\dots k\},\P)$ underlying the stochastic embedding, and $C$ depends only on $D$. Since $\LF(U)$ is 8-isomorphic to the finite dimensional space $\ell_1^{|U|-1}$ (see, for example, \cite[Corollary~2.5]{DKO}), the conclusion follows.
\end{proof}

\begin{theorem} \label{thm:stochasticembedding=>LF}
Suppose $X$ is an infinite metric space such that all of its bounded subsets are finite. If $X$ admits an $L_1$-stochastic embedding into the class of ultrametric spaces, then $\LF(X)$ is isomorphic to $\ell_1$.
\end{theorem}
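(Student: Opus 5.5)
We will combine Lemma~\ref{lem:LF1} with a standard finite-dimensional decomposition argument for proper metric spaces, and finish with Pełczyński's decomposition method. Fix a basepoint $x_0\in X$ and let $X_n := B_n(x_0)$. Since bounded subsets of $X$ are finite, each $X_n$ is finite, $X_n\subseteq X_{n+1}$, and $X=\bigcup_n X_n$. By Lemma~\ref{lem:LF1} and Definition~\ref{def:stochasticembedding}, every $\LF(X_n)$, and more generally $\LF(F)$ for every finite $F\sbs X$, is $D'$-isomorphic to a $C'$-complemented subspace of $\ell_1$, with $D',C'$ uniform. Consequently $\LF(X)$ is a $\mathcal{L}_1$-type space: it is the closure of the increasing union of the isometric subspaces $\LF(X_n)$ (using McShane extension), each uniformly isomorphic to a uniformly complemented subspace of $\ell_1^{m}$ for suitable $m$.

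The first main step is to show that $\LF(X)$ itself is isomorphic to a complemented subspace of $\ell_1$. I would use the fact, available in \cite{Gartland} for stochastic embeddings into ultrametrics (or equivalently obtained from the quantitative version of Lemma~\ref{lem:LF1} applied in a compatible way), that the isomorphisms and projections can be produced coherently. Concretely, the plan is to pass to a subsequence of scales $n_1<n_2<\dots$ and use the Kalton-type decomposition: for a proper metric space, $\LF(X)$ is isomorphic to a complemented subspace of $\big(\bigoplus_k \LF(A_k)\big)_{\ell_1}$, where $A_k$ are the (finite) annuli $X_{n_{k+1}}\setminus X_{n_{k-1}}$ together with $x_0$ (cf.\ \cite{Kalton}, Lemma~4.2). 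Each $\LF(A_k)$ is uniformly isomorphic to a uniformly complemented subspace of $\ell_1$ by Lemma~\ref{lem:LF1}, so the $\ell_1$-sum is isomorphic to a complemented subspace of $\big(\bigoplus_k \ell_1\big)_{\ell_1}\cong\ell_1$. Hence $\LF(X)$ is isomorphic to a complemented subspace of $\ell_1$; since $X$ is infinite, $\LF(X)$ is infinite dimensional, and by Pełczyński's theorem every infinite-dimensional complemented subspace of $\ell_1$ is isomorphic to $\ell_1$. This finishes the proof.

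The main obstacle is the first step: verifying that Kalton's decomposition applies with uniform constants. Kalton's lemma needs the scales to grow geometrically (e.g.\ $n_k = 2^k$) so that Lipschitz functions can be cut off by radial bump functions with controlled Lipschitz constants; the resulting operators $\mu\mapsto(\psi_k\mu)_k$ are bounded into $\big(\bigoplus_k\LF(A_k)\big)_{\ell_1}$ with a left inverse given by summation. I expect this to go through verbatim since it uses only the metric structure, not any property of the annuli; the uniformity of $D',C'$ from Lemma~\ref{lem:LF1} over all finite subsets is exactly what is needed to make the $\ell_1$-sum of the complemented embeddings bounded.
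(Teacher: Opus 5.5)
Your proposal is correct and follows essentially the same route as the paper: apply Kalton's decomposition \cite[Lemma~4.2]{Kalton} to exhibit $\LF(X)$ as complemented in an $\ell_1$-sum of $\LF$ of bounded (hence finite) pieces, use Lemma~\ref{lem:LF1} with uniform constants on each piece, and finish with Pe{\l}czy\'nski's theorem on complemented subspaces of $\ell_1$. Whether Kalton's pieces are annuli or balls $B_{2^k}(x_0)$ is immaterial here, since only their boundedness, and hence finiteness, is used.
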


\begin{proof}
Assume that $X$ admits an $L_1$-stochastic embedding into the class of ultrametric spaces. Let $D<\infty$ such that every finite subset of $X$ admits an $L_1$-stochastic embedding into the class of ultrametric spaces with $L_1$-stochastic distortion $\leq D$. By \cite[Lemma~4.2]{Kalton}, there is a sequence of bounded (and hence finite) subsets $(X_n)_{n=1}^\infty$ of $X$ such that $\LF(X)$ is isomorphic to a complemented subspace of the $\ell_1$-sum $\oplus^1_n \LF(X_n)$. By Lemma~\ref{lem:LF1}, $\LF(X_n)$ is $D'$-isomorphic to a $C'$-complemented subspace of $\ell_1$ for each $n\in\N$, where $D',C'<\infty$ depend only on $D$. Thus, the $\ell_1$-sum $\oplus^1_n \LF(X_n)$ is $D'$-isomorphic to a $C'$-complemented subspace of $\ell_1$, and so $\LF(X)$ is isomorphic to a complemented subspace of $\ell_1$. Since every infinite dimensional, complemented subspace of $\ell_1$ is isomorphic to $\ell_1$, (see, for example, \cite[Theorem~15]{AO}), $\LF(X)$ is isomorphic to $\ell_1$.
\end{proof}

Recall the definition of uniformly Lipschitz actions with compression exponent 1 from \S\ref{sec:intro}. The following corollary proves Theorem~\ref{thm:nearlyundistortedorbit}.

\begin{corollary} \label{cor:nearlyundistortedorbit}
Let $\Gamma$ be a finitely generated infinite group with finite Nagata dimension. Then $\Gamma$ admits uniformly Lipschitz actions on $\ell_1$ with compression exponent 1.
\end{corollary}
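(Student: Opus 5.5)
Fix $\eps\in(0,1)$ and set $\omega(t)=t^{1-\eps}$, a proper gauge. Let $d=d_\Gamma$ be the word metric, so that $\Gamma$ acts on $(\Gamma,\omega\circ d)$ by left translation isometrically. The plan has three steps. First, show that $(\Gamma,\omega\circ d)$ admits an $L_1$-stochastic embedding into the class of ultrametric spaces. Then deduce from Theorem~\ref{thm:stochasticembedding=>LF} that $\LF(\Gamma,\omega\circ d)\cong\ell_1$. This applies because $\omega\circ d$ is a proper metric: bounded sets are finite, since $\Gamma$ is finitely generated. Finally, transfer the isometric affine action of $\Gamma$ on $\LF(\Gamma,\omega\circ d)$ through the isomorphism $T:\LF(\Gamma,\omega\circ d)\to\ell_1$. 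Conjugating by $T$ yields an affine action on $\ell_1$ which is uniformly Lipschitz with constant $\|T\|\|T^{-1}\|$.

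For the orbit estimate, take $x_0=T(0)$, corresponding to the basepoint $\delta_e-\delta_e$ under the affine action $\mu\mapsto \gamma_\#\mu+\delta_{\gamma}-\delta_e$ (as in \cite[Lemma~6.1]{Ver2}). Then
\[
\|T(\delta_\gamma-\delta_e)\|_1\ \ge\ \|T^{-1}\|^{-1}\,\|\delta_\gamma-\delta_e\|_{\LF}=\|T^{-1}\|^{-1}\,d_\Gamma(e,\gamma)^{1-\eps}.
\]
This is exactly the compression bound with $K=0$. Since $\eps$ was arbitrary, this gives compression exponent 1. Note that the action may depend on $\eps$, which the definition permits.

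The main obstacle is the first step: producing the stochastic embedding of the snowflake. I would use finite Nagata dimension to obtain finite separation modulus, and then use the snowflake estimate Theorem~\ref{thm:separation=>stochasticembedding} (the implication $(2)\Rightarrow(3)$ of Theorem~\ref{thm:mainextended}). That result gives $L_1$-stochastic embeddings into ultrametrics of snowflakes $(X,d^{1-\eps})$ of finite separation modulus spaces. For the input, it is known (e.g.\ from \cite{Gartland}, or from the Lang--Schlichenmaier partitions) that a space of Nagata dimension $\le n$ has separation modulus bounded in terms of $n$ and the Nagata modulus. Concretely, Nagata covers at every scale $r$ with multiplicity $n+1$ and diameter $\lesssim r$ yield random partitions that separate $x,y$ with probability $\lesssim_n d(x,y)/r$.

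The snowflaking is essential. The raw metric would only give weak or logarithmic control when summing over scales. The exponent $1-\eps$ makes the geometric series over dyadic scales $\sum_k 2^{-k\eps}$ converge, which bounds the expected ultrametric distortion by a constant $C(\eps,n)$. If Theorem~\ref{thm:separation=>stochasticembedding} is stated in exactly this form, the step is a citation. Otherwise I would reprove it directly by building the random hierarchical partition from independent Nagata partitions at scales $2^k$. I would then define the ultrametric $\rho(x,y)=\max\{2^{k(1-\eps)}: x,y \text{ separated at scale } 2^k\}$, which is noncontractive, and bound $\E\rho(x,y)$ via the sum over $k\ge\log_2 d(x,y)$ of $2^{k(1-\eps)}\cdot d(x,y)/2^k$.
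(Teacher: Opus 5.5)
Your proof is correct, and its skeleton matches the paper's. You snowflake the word metric to $d_\Gamma^{1-\eps}$, embed stochastically into ultrametrics, apply Theorem~\ref{thm:stochasticembedding=>LF}, and conjugate the isometric affine action on $\LF(\Gamma,d_\Gamma^{1-\eps})$ by the isomorphism; the orbit bound at $x_0=0$ is exactly as you wrote.

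The only real difference is the key step. The paper gets the $L_1$-stochastic embedding of $(\Gamma,d_\Gamma^{1-\eps})$ into ultrametrics in one line from \cite[Theorem~B]{Gartland}. You instead derive it from finite separation modulus, using the machinery of \S\ref{sec:PropA}.

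Theorem~\ref{thm:separation=>stochasticembedding} is not stated in the form you need. It only yields $\sqrt{\omega}\circ d$, which is exponent $1/2$ when $\omega=\mathrm{id}$. It also cannot be applied with $\omega(t)=t^{2(1-\eps)}$ for $\eps<1/2$, since that function is not a gauge. What you want is Lemma~\ref{lem:separationmodulusweak} followed by Lemma~\ref{lem:weaksnowflake} with general $\eps$, which gives distortion $<64\sigma/\eps$ on every finite subset. Your direct hierarchical computation also works, with two small fixes: use $2^{(k+1)(1-\eps)}$ to get exact noncontractivity, and note that scales $2^k<d(x,y)$ contribute at most $d(x,y)^{1-\eps}$ to the maximum.

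The input that finite Nagata dimension gives finite separation modulus is true and elementary, though you only asserted it. Take a Nagata cover at scale $r$, pick $t$ uniformly in $(0,r/4)$, and send each point to the first cover element (in a fixed order) within distance $t$ of it. Pieces then have diameter at most $(\gamma+\frac12)r$. When $d(x,y)<r/4$, at most $n+1$ cover elements lie within $r/4$ of $\{x,y\}$. The points $x,y$ can only be separated if $t$ falls between $d(x,B)$ and $d(y,B)$ for one of these elements $B$, so a union bound gives separation probability at most $4(n+1)d(x,y)/r$.

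The paper's citation is shorter. Your route is self-contained within the paper and makes the $O(\sigma/\eps)$ dependence explicit. It also shows the corollary holds for any finitely generated infinite group whose word metric has finite separation modulus.
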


\begin{proof}
Let $\eps\in(0,1)$. Since the assignment $X \mapsto \LF(X)$ is functorial, we have that $\Gamma$ acts by affine isometries on $\LF(\Gamma,d_\Gamma^{1-\eps})$ with $\|0 - \gamma\cdot 0\|_{\LF(\Gamma,d_\Gamma^{1-\eps})} = d_\Gamma(e,\gamma)^{1-\eps}$ for all $\gamma\in\Gamma$ (see \cite[Lemma~6.1]{Ver2} for details). By \cite[Theorem~B]{Gartland}, $(\Gamma,d_\Gamma^{1-\eps})$ admits an $L_1$-stochastic embedding into the class of ultrametric spaces, and then by Theorem~\ref{thm:stochasticembedding=>LF}, $\LF(\Gamma,d_\Gamma^{1-\eps})$ is isomorphic to $\ell_1$. Conjugating the action of $\Gamma$ on $\LF(\Gamma,d_\Gamma^{1-\eps})$ with an isomorphism $\LF(\Gamma,d_\Gamma^{1-\eps}) \approx \ell_1$ yields the desired action on $\ell_1$.
\end{proof}

\subsection{Conditionally negative definite kernels and actions on $\boldsymbol{E\subseteq L_1}$}
Let $X$ be a set, and let $\Phi:X\times X\to[0,\infty)$ be a symmetric function such that $\Phi(x,x)=0$ for all $x\in X$. We say that $\Phi$ is a {\it conditionally negative definite (CND) kernel} on $X$ if
\begin{align*}
	\sum_{i,j=1}^n \alpha_i\alpha_j\Phi(x_i,x_j) \leq 0
\end{align*}
for every choice of elements $x_1,\ldots,x_n\in X$ and scalars $\alpha_1,\ldots,\alpha_n\in\R$ satisfying
\begin{align*}
	\sum_{i=1}^n\alpha_i=0.
\end{align*}

The following example will be particularly relevant for our purposes. Let $f:X\to L_1$ be any map. Then the kernel $\Phi:X\times X\to[0,\infty)$ defined by
\begin{align*}
	\Phi(x,y)=\|f(x)-f(y)\|_1
\end{align*}
is CND; see e.g. \cite[Lemma~7]{Now} for a proof.

Let $\Gamma$ be a countable discrete group. Then $\Gamma$ can be endowed with a proper, left-invariant metric $d_\Gamma$, and this metric is unique up to coarse equivalence; see e.g. \cite[Proposition 2.3.3]{Willett}. Let $\Phi:\Gamma\times \Gamma\to[0,\infty)$ be a CND kernel. We say that $\Phi$ is (multiplicatively) almost invariant if there is a constant $C<\infty$ such that, for all $g,x,y\in\Gamma$,
\begin{align*}
	\Phi(gx,gy)\leq C\Phi(x,y).
\end{align*}
We say that such a kernel is proper if $\Phi(e,\gamma)\to\infty$ as $d_\Gamma(e,\gamma)\to\infty$. Here $e$ denotes the identity element of $\Gamma$. The following result was essentially proved in \cite[Theorem 1.1]{Ver}.

\begin{theorem}\label{thm:AICNDK}
Let $\Gamma$ be a countable, discrete group equipped with a proper, left-invariant metric. Then $\Gamma$ admits a proper, uniformly Lipschitz affine action on a subspace of $L_1$ if and only if $\Gamma$ admits a proper, almost invariant CND kernel.
\end{theorem}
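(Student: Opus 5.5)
The plan is to prove the two directions separately. The forward direction is a direct cocycle computation. The reverse direction should be reduced to \cite[Theorem~1.1]{Ver}, whose hypotheses differ from ours only in formulation. So the work is to match conventions (properness with respect to $d_\Gamma$, multiplicative almost invariance, basepoints) and to check that nothing is lost in the translation.

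($\Rightarrow$) Suppose $\Gamma$ acts on a subspace $E\sbs L_1$ by $\gamma\cdot v=\pi(\gamma)v+b(\gamma)$. Here $\pi$ is a linear representation with $\sup_\gamma\|\pi(\gamma)\|\le L$, and $b$ satisfies the cocycle identity $b(gx)=\pi(g)b(x)+b(g)$. First we move the basepoint of the orbit to $0$. Since every map $v\mapsto\gamma\cdot v$ is $L$-Lipschitz, we have $\bigl|\|x_0-\gamma x_0\|-\|0-\gamma\cdot0\|\bigr|\le (1+L)\|x_0\|$. Hence properness of the orbit of $x_0$ is equivalent to $\|b(\gamma)\|_1\to\infty$ as $d_\Gamma(e,\gamma)\to\infty$.

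Next define $\Phi(x,y):=\|b(x)-b(y)\|_1$. This kernel is CND, since $b$ is a map into $L_1$ and \cite[Lemma~7]{Now} applies. It is symmetric and vanishes on the diagonal. By the cocycle identity, $b(gx)-b(gy)=\pi(g)(b(x)-b(y))$, so $\Phi(gx,gy)\le L\,\Phi(x,y)$. Finally $\Phi(e,\gamma)=\|b(\gamma)\|_1$, using $b(e)=0$, so $\Phi$ is proper.

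($\Leftarrow$) Given a proper CND kernel $\Phi$ with $\Phi(gx,gy)\le C\Phi(x,y)$, I first normalize it into the form required by \cite[Theorem~1.1]{Ver}. If Vergara's kernels are required to be, say, $\Phi^{1/2}$-type Hilbert distances or to be bounded on generators, then the following operations preserve all hypotheses with modified constants:
\begin{itemize}
\item Schoenberg's theorem writes $\Phi=\|f(x)-f(y)\|_2^2$, and then $\Phi^{1/2}$ is again CND, because $L_2$ embeds linearly isometrically into $L_1$ (\S\ref{ss:Banach}).
\item $\Phi^{1/2}$ is still proper and almost invariant, now with constant $C^{1/2}$.
\end{itemize}
The conclusion of that theorem is a uniformly Lipschitz affine action on a subspace of $L_1$ whose orbit map at $0$ dominates a proper function of $\Phi(e,\gamma)$. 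Properness of the action then follows exactly as in the forward direction.

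The main obstacle is the reverse direction if one tries to prove it from scratch rather than citing. The natural candidate is the span $E$ of the vectors $F(x)-F(y)$ in $L_1$, where $\|F(x)-F(y)\|_1$ is comparable to $\Phi$, with linear part $\pi(g)(F(x)-F(y)):=F(gx)-F(gy)$. Almost invariance controls $\pi(g)$ only on these elementary differences, not on their linear combinations, so $\pi(g)$ need not be well defined or bounded on $E$. Vergara's argument overcomes this by building the $L_1$-space and the representation together out of the kernel, rather than from an arbitrary embedding. This is exactly the content being imported, so I would cite it rather than reprove it, and carefully verify only the convention-matching described above.
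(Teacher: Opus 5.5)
Your forward direction is correct: $\Phi(x,y)=\|b(x)-b(y)\|_1$ is CND and proper, it satisfies $\Phi(gx,gy)\le L\,\Phi(x,y)$ by the cocycle identity, and your basepoint comparison is right. The gap is in the reverse direction. The mismatch with \cite[Theorem~1.1]{Ver} is not a normalization of the kind you guess (a $\Phi^{1/2}$-type Hilbert distance, or boundedness on generators). That theorem is proved for \emph{additively} almost invariant kernels, meaning $\Phi(gx,gy)\le\Phi(x,y)+C'$ for all $g,x,y$; applying this with $g^{-1}$ as well gives $\sup_{g,x,y}|\Phi(gx,gy)-\Phi(x,y)|<\infty$. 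The statement here instead assumes \emph{multiplicative} almost invariance, $\Phi(gx,gy)\le C\Phi(x,y)$. The best additive bound this yields is $\Phi(gx,gy)-\Phi(x,y)\le (C-1)\Phi(x,y)$, which is unbounded since a proper kernel is unbounded. So Vergara's hypothesis is not verified, and your normalization does not fix this: $\Phi^{1/2}$ is again only multiplicatively almost invariant, now with constant $C^{1/2}$, and no power $\Phi^\theta$ can convert a multiplicative constant into an additive one. As written, you are citing a theorem for a kernel that need not satisfy its hypothesis.

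The missing idea, which is the entire content of the paper's proof, is a logarithmic change of kernel. Set $\Phi_1(x,y):=\log\bigl(1+\Phi(x,y)\bigr)$. This kernel is still CND, because $\log(1+t)=\int_0^\infty(1-e^{-st})\,s^{-1}e^{-s}\,ds$ and each $1-e^{-s\Phi}$ is CND by Schoenberg's theorem; this is \cite[Lemma~2.2]{Ver}. It is proper because $\Phi$ is. Taking $C\ge1$ without loss of generality,
\[
\Phi_1(gx,gy)\le\log\bigl(1+C\Phi(x,y)\bigr)\le\log C+\log\bigl(1+\Phi(x,y)\bigr)=\log C+\Phi_1(x,y).
\]
Hence $\Phi_1$ is additively almost invariant with constant $\log C$, and \cite[Theorem~1.1]{Ver} applies directly to give the proper uniformly Lipschitz affine action on a subspace of $L_1$.
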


\begin{proof}
This equivalence was proved in \cite[Theorem 1.1]{Ver} for a stronger notion of almost invariance. We only need to show that a multiplicatively almost invariant CND kernel gives rise to an additively almost invariant CND kernel, and then apply \cite[Theorem 1.1]{Ver}. Let $\Phi_0$ be a proper multiplicatively almost invariant CND kernel on $\Gamma$, and let us define, for all $x,y\in\Gamma$,
	\begin{align*}
		\Phi(x,y)=\log\left(1+\Phi_0(x,y)\right).
	\end{align*}
It holds that $\Phi$ is a CND kernel too (see \cite[Lemma~2.2]{Ver} for an argument). Moreover, it is proper because $\Phi_0$ is proper. Finally, since $\Phi_0$ is (multiplicatively) almost invariant, there is $C<\infty$ such that, for all $g,x,y\in\Gamma$,
	\begin{align*}
		\Phi(gx,gy) &\leq \log\left(1+C \Phi_0(x,y)\right)\\
		&\leq \log(C) + \log\left(1+\Phi_0(x,y)\right)\\
		&= \log(C) + \Phi(x,y),
	\end{align*}
	which shows that $\Phi$ is additively almost invariant.
\end{proof}

\section{The Rescaling Lemma} \label{sec:rescaling}
In this section, we prove and apply a trick that we call the {\it Rescaling Lemma}. The trick is completely elementary and has appeared in coarse geometry before, for example to provide a simple proof that spaces of finite asymptotic dimension coarsely embed into a finite product of trees \cite{CoarseEmbeddingIntoTrees}. We are able to get a surprising amount of mileage out of the Rescaling Lemma, providing in Proposition~\ref{prop:dictionary} a dictionary between coarse and Lipschitz geometric properties, which in turn powers some of the main results of the article.

\begin{lemma} [The Rescaling Lemma]\label{lem:rescaling}
Let $\rho: [1,\infty) \to [0,\infty)$ be any function mapping bounded sets to bounded sets (for example, if $\rho$ is nondecreasing). Then there exists a proper gauge $\omega: [0,\infty) \to [0,\infty)$ such that $\omega(\rho(t)) \leq 4\omega(t)$ for all $t \in [1,\infty)$.   
\end{lemma}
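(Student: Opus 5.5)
First replace $\rho$ by a nondecreasing majorant. Since $\rho$ maps bounded sets to bounded sets, the function $\bar\rho(t) := \max\{t, \sup_{s\in[1,t]}\rho(s)\}$ is finite, nondecreasing, and satisfies $\bar\rho(t)\geq \max\{t,\rho(t)\}$. Because any gauge is nondecreasing, it suffices to find a proper gauge with $\omega(\bar\rho(t))\le 4\omega(t)$. So assume from now on that $\rho$ is nondecreasing with $\rho(t)\geq t$.

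Next choose scales. Define $t_0 := 1$ and inductively $t_{n+1} := \max\{2t_n, \rho(t_n)+1\}$. Then $(t_n)$ is strictly increasing with $t_n\to\infty$, $t_{n+1}\ge 2t_n$, and $\rho(t_n) < t_{n+1}$.

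Now build $\omega$ as a piecewise linear function that gains one unit on each scale interval. Set $\omega(0)=0$ and $\omega(t_n) = n+1$ for $n\ge 0$, and interpolate linearly on $[0,t_0]$ and on each $[t_n,t_{n+1}]$. The slopes are $1$ on $[0,1]$ and $1/(t_{n+1}-t_n)$ on $[t_n,t_{n+1}]$. Since $t_{n+1}-t_n\ge t_n$ and $t_{n+2}-t_{n+1}\ge t_{n+1}\ge 2t_n$, we must still check that these slopes are nonincreasing, which is exactly concavity. That reduces to $t_{n+2}-t_{n+1}\ge t_{n+1}-t_n$, and this holds because $t_{n+2}-t_{n+1}\geq t_{n+1} > t_{n+1}-t_n$. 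The first slope $1$ is also at least $1/(t_1-t_0)$, since $t_1 - t_0\ge 1$. Hence $\omega$ is continuous, concave, $\omega(0)=0$, and $\omega(t)\to\infty$, so it is a proper gauge.

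Finally verify the inequality. Take $t\ge1$ and choose $n$ with $t\in[t_n,t_{n+1})$. Monotonicity of $\rho$ gives $\rho(t)\le\rho(t_{n+1})<t_{n+2}$, so
\[
\omega(\rho(t))\le\omega(t_{n+2})=n+3 .
\]
On the other hand $\omega(t)\ge\omega(t_n)=n+1$. Since $n+3\le 3(n+1)\le 4(n+1)$ for all $n\ge0$, we get $\omega(\rho(t))\le 4\omega(t)$.

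The only real subtlety is concavity: the scales must grow at least geometrically so that the slopes decrease. The doubling built into $t_{n+1}$ handles this. Everything else is bookkeeping, and the constant $4$ leaves slack over the $3$ actually obtained.
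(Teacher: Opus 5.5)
Your proof is correct and follows the same scheme as the paper's. You majorize $\rho$ by a nondecreasing function, iterate it with forced geometric growth to produce scales, interpolate $\omega$ piecewise linearly between them, verify concavity through decreasing slopes, and trap $t$ and $\rho(t)$ between consecutive scales. The paper instead assigns the values $2^k$ at scales growing by a factor of at least $3$, which yields exactly the factor $4$; your unit increments with doubling give the slightly stronger additive bound $\omega(\rho(t))\le\omega(t)+2$, and hence the constant $3$.
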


\begin{proof}
Define $\tilde{\rho}: [1,\infty) \to [0,\infty)$ by $\tilde{\rho}(t) := \sup_{s\in[1,t]}\{\rho(s),3t\}$. The supremum exists by assumption on $\rho$, and clearly $\rho(t) \leq \tilde{\rho}(t)$ with $\tilde{\rho}$ nondecreasing. Define a sequence of points $\{x_k\}_{k=1}^\infty \sbs [0,\infty)$ recursively by
\begin{align*}
    x_1 := 1, && x_{k+1} := \tilde{\rho}(x_k).
\end{align*}
The definition of $\tilde{\rho}$ implies $x_{k+1} \geq 3x_k$.
In particular, $x_k$ strictly increases to $\infty$. Define a function $\omega: \{0\} \cup \{x_k\}_{k=1}^\infty \to [0,\infty)$ by $\omega(0) := 0$ and $\omega(x_k) := 2^k$. Since $\{x_k\}_{k=1}^\infty$ is increasing, $\omega$ is well-defined and strictly increases to $\infty$ as well. Extend $\omega$ to a function on all of $[0,\infty)$ by linear interpolation on the intervals $[0,x_1]$ and $[x_k,x_{k+1}]$. This extension is continuous and also strictly increases to $\infty$, and thus is a homeomorphism. It will be concave as long as the slope of $\omega$ on adjacent intervals is decreasing. Comparing the slopes on the first two intervals $[0,x_1], [x_1,x_2]$, we see that
$$\frac{\omega(x_2)-\omega(x_1)}{x_2-x_1} = \frac{4-2}{x_2-1} \leq \frac{4-2}{3-1} = 1 < 2 = \frac{\omega(x_1)-\omega(0)}{x_1-0}.$$
Comparing the slopes on the other adjacent intervals, we see that
$$\frac{\omega(x_{k+1})-\omega(x_k)}{x_{k+1}-x_k} = \frac{2^k}{x_{k+1}-x_k} \leq \frac{2^k}{3x_k-x_k}$$
$$=\frac{2^{k-1}}{x_k} < \frac{2^{k-1}}{x_k-x_{k-1}} = \frac{\omega(x_k)-\omega(x_{k-1})}{x_k-x_{k-1}}.$$
It remains to check that $\omega(\rho(t)) \leq 4\omega(t)$.

Let $t \in [1,\infty)$. Choose $m \in \N$ such that $x_{m} \leq t < x_{m+1}$. Using the facts that $\omega$ and $\tilde{\rho}(t)$ are nondecreasing and that $\rho(t) \leq \tilde{\rho}(t)$, we have
\begin{align*}
    \omega(\rho(t)) \leq \omega(\tilde{\rho}(t)) \leq \omega(\tilde{\rho}(x_{m+1})) = \omega(x_{m+2}) = 2^{m+2} = 4\omega(x_m) \leq 4\omega(t).
\end{align*}
\end{proof}

The Rescaling Lemma allows us to provide a dictionary between common notions in coarse and Lipschitz geometry. We do so for three such notions in Proposition~\ref{prop:dictionary}. First, we need a standard lemma about modifying coarse embeddings into Banach spaces.

\begin{lemma} \label{lem:lowergauge}
Let $(X,d)$ be a countable, 1-separated metric space that coarsely embeds into an infinite dimensional Banach space $E$. Then there exists a coarse embedding from $X$ into $E$ that admits a lower control function $\rho_1$ that is a proper gauge satisfying $\rho_1(1) = 1$.
\end{lemma}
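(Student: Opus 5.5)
We start from an arbitrary coarse embedding $f: X\to E$ with lower and upper control functions $\rho_1,\rho_2$, where $\rho_1$ is nondecreasing with $\rho_1(t)\to\infty$. The plan is to fix the lower control in two ways at once: first make it a proper gauge that is very small at small scales, and then add a correction map so that the lower control is at least $1$ at distance $1$. Since $X$ is $1$-separated, only distances $t\geq 1$ matter. So it suffices to find a proper gauge $\omega$ and a coarse embedding $g$ with $\|g(x)-g(y)\|\ge \omega(d(x,y))$ for all $x\ne y$ and $\omega(1)=1$.

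\textbf{Step 1 (concave minorant).} We first build a proper gauge $\omega_0\le \rho_1$ on $[1,\infty)$. Since $\rho_1\to\infty$, choose $1=t_0<t_1<t_2<\cdots$ with $\rho_1(t)\ge k$ for $t\ge t_k$. Next, pick heights $h_k\to\infty$ with $h_k\le k-1$, and define $\omega_0$ piecewise linearly through the points $(0,0)$, $(t_1,h_1)$, $(t_2,h_2),\dots$. We choose the increments $h_{k+1}-h_k$ small enough that the slopes decrease; for instance take increments $\min\{1,(\text{previous slope})(t_{k+1}-t_k)\}/2$. Then $\sum_k$ of the increments still diverges because each is at least a fixed fraction of one, as long as we spread the $t_k$ out. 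Alternatively, we can simply take $\omega_0$ to be the least concave majorant argument applied to the function $\min(\rho_1,\cdot)$, which is less explicit. With either construction, $\omega_0(t)\le\rho_1(t)$ for $t\ge1$ and $\omega_0$ is a proper gauge. We may also rescale so that $\omega_0(1)\le 1$.

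\textbf{Step 2 (fixing scale $1$).} To get $\omega(1)=1$ we need a map that separates distinct points by at least a constant while remaining coarse. Since $X$ is countable, enumerate $X=\{x_n\}$. Using that $E$ is infinite dimensional, pick a normalized basic sequence $(e_n)$ in $E$ with basis constant $\le 2$ (Mazur). Define $h(x_n)=e_n$. Then $\|h(x)-h(y)\|\in[1/2,2]$ for $x\ne y$, because for a basic sequence with constant $K$ we have $\|e_n-e_m\|\ge 1/K$.

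Now set $g=f\oplus h$, realized inside $E$. One way is to use $E\cong E\oplus E$, but that isomorphism need not exist in general. Instead, we split the basic sequence: we choose $(e_n)$ to be far from the range of $f$, which is not possible in general either. The cleaner approach is $g(x)=f(x)+h(x)$. Then the triangle inequality gives an upper control of $\rho_2+2$. For the lower bound we get $\|g(x)-g(y)\|\ge \rho_1(d)-2$, which is useful at large scales. At small scales this lower bound fails, and this is the main obstacle. To handle it, we would replace $f$ by $\lambda f$ with $\lambda$ small and take the $e_n$ from a block basic sequence chosen inductively, so that $e_n$ is almost orthogonal (in the Mazur sense) to the finite-dimensional span of $\{f(x_1),\dots,f(x_n),e_1,\dots,e_{n-1}\}$. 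With that choice the projection estimate gives $\|g(x_n)-g(x_m)\|\ge c\max\{\|f(x_n)-f(x_m)\|,\|e_n-e_m\|\}$ for a universal $c$, and this handles every scale at once.

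\textbf{Step 3 (conclusion).} Taking $\omega(t)=\min\{c,\,c\,\omega_0(t)\}$ would not be proper, so we instead use $\omega(t)=c\max\{\omega_0(t), t\wedge 1\}$. This is concave, being a maximum of a concave function and the concave function $t\wedge 1$ only after adjusting, so we take its least concave majorant, which stays below $c\max\{\rho_1(t),1\}$ for $t\ge1$. We then normalize by dividing $g$ by $\omega(1)$, which gives $\rho_1(1)=1$. The key step is the inductive Mazur-type choice of the $e_n$ in Step 2.
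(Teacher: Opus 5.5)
Your argument is correct in outline, but both halves take a different route from the paper's.

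\emph{Separation.} The paper works locally. It picks $t_0$ with $\rho_1(t_0)\geq 4$ and takes a maximal $t_0$-separated net $N\sbs X$, so that $f(N)$ is $4$-separated. It then uses Riesz's lemma to redefine $f$ on each fiber of a nearest-point projection $X\to N$ as a $1$-separated set inside $B_{4/3}(f(x))$, at an additive cost of $2(\rho_2(t_0)+\frac43)$.

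You instead perturb globally, setting $g(x_n)=f(x_n)+e_n$ with $e_n$ chosen inductively against $F_n=\mathrm{span}\{f(x_1),\dots,f(x_n),e_1,\dots,e_{n-1}\}$. This is valid and avoids nets and projections. The basic-sequence/Mazur language, the rescaling by $\lambda$, and the discarded $E\oplus E$ idea are all unnecessary. Riesz's lemma alone gives a unit vector $e_n$ with $\dist(e_n,F_n)\geq\frac12$. Since $g(x_n)-g(x_m)\in e_n+F_n$ for $m<n$, you get $\|g(x_n)-g(x_m)\|\geq\frac12$ directly. The triangle inequality gives $\|g(x_n)-g(x_m)\|\geq\|f(x_n)-f(x_m)\|-2$. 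Together these yield $\|g(x)-g(y)\|\geq\frac18\max\{\rho_1(d(x,y)),1\}$ for $x\neq y$. Your $c\max\{\cdot,\cdot\}$ estimate is true, but it needs this triangle-inequality step, not just the projection estimate.

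\emph{Gauge.} The paper applies the Rescaling Lemma to the pseudoinverse $\rho_1^{-1}$. Using $\rho_1(d)\geq 1$, it gets $\omega(d)\leq 4\omega(\rho_1(d))\leq 4\omega(1)\rho_1(d)$. Your explicit piecewise-linear concave minorant is essentially a hand-made special case of that lemma, which is fine.

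There are a few slips to fix.
\begin{itemize}
\item In Step 1, the requirement $\omega_0\leq\rho_1$ on $[1,\infty)$ cannot hold when $\rho_1$ vanishes near $1$, because a proper gauge is strictly positive on $(0,\infty)$. Indeed, your prescription $h_k\leq k-1$ forces $h_1\leq 0$. Aim instead for $\omega_0\leq\max\{\rho_1,1\}$, e.g.\ with $h_1\in(0,1]$ and $h_{k+1}\leq k$. That is all you need once Step 2 is in place.
\item The ``least concave majorant'' alternative in Step 1 points the wrong way, since a majorant lies above $\rho_1$.
\item Step 3's detour through $\max\{\omega_0,t\wedge 1\}$ and its least concave majorant is unnecessary. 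That majorant is also only guaranteed to be at most $\omega_0+1\leq 2\max\{\rho_1,1\}$, not $\max\{\rho_1,1\}$. Simply use $\omega_0$ itself and multiply $g$ by $1/(c\,\omega_0(1))$. The lower control function is then $\omega_0/\omega_0(1)$, a proper gauge equal to $1$ at $1$.
\end{itemize}
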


\begin{proof}
Let $f: X \to E$ be a coarse embedding with nondecreasing lower and upper control functions $\rho_1$ and $\rho_2$. Choose $t_0 \in [0,\infty)$ such that $\rho_1(t_0) \geq 4$. Choose a maximal $t_0$-separated subset $N \sbs X$, so that $f(N)$ is 4-separated in $E$. Since $E$ is infinite-dimensional and $X$ is countable and 1-separated, it is possible to modify $f$ on $X\setminus N$ in order to find a new coarse embedding $\tilde{f}: X \to E$ with lower control function $\tilde{\rho}_1$ satisfying $\tilde{\rho}_1(1) \geq 1$. We will quickly sketch the construction: Let $\pi: X \to N$ be any nearest neighbor projection, and form the partition $X = \sqcup_{x\in N} \pi^{-1}(x)$ into the fibers of $\pi$. Each fiber $\pi^{-1}(x)$ is contained in the ball $B_{t_0}(x)$, and hence its image under $f$ is contained in $B_{\rho_2(t_0)}(x)$. For each $x \in N$, Riesz's lemma allows us to choose a mapping $\tilde{f}_x$ from $\pi^{-1}(x)$ to $B_{4/3}(f(x)) \sbs E$ such that $\tilde{f}_x(x) = f(x)$ and $\tilde{f}_x(\pi^{-1}(x))$ is 1-separated. Pasting these maps together, we get a map $\tilde{f}: X \to E$ that agrees with $f$ on $N$, has image that is 1-separated, and satisfies $\|\tilde{f}(x)-\tilde{f}(y)\| \leq \|f(x)-f(y)\| + 2(\rho_2(t_0)+\frac{4}{3})$. This completes the construction. Since $X$ and $\tilde{f}(X)$ are 1-separated, $\tilde{f}$ admits a lower control function $\tilde{\rho}_1$ with $\tilde{\rho}_1(1) \geq 1$. Henceforth, we will keep writing $f$ and $\rho_1$ instead of $\tilde{f}$ and $\tilde{\rho}_1$.

One may easily see how to construct a proper gauge serving as the lower control function. We provide short proof using the Rescaling Lemma.  Let $\rho_1^{-1}: [1,\infty) \to [0,\infty)$ be the pseudoinverse of $\rho_1$ defined by $\rho_1^{-1}(s) := \inf\{t\in[1,\infty): \rho_1(t) > s\}$. This function is well-defined since $\rho_1(t)\to\infty$, is obviously nondecreasing, and satisfies $t \leq \rho_1^{-1}(\rho_1(t))$ for all $t \geq 0$ since $\rho_1$ is nondecreasing. We apply the Rescaling Lemma~\ref{lem:rescaling} to $\rho_1^{-1}$ and obtain a proper gauge $\omega: [0,\infty)\to[0,\infty)$ satisfying $\omega(\rho_1^{-1}(t)) \leq 4\omega(t)$ for every $t \geq 1$. Then, for all $x\neq y\in X$, it holds that $\rho_1(d(x,y)) \geq \rho_1(1) \geq 1$, and thus we have
\begin{align*}
    \omega(d(x,y)) &\leq \omega(\rho_1^{-1}(\rho_1(d(x,y)))) \\
    &\leq 4\omega(\rho_1(d(x,y))) \\
    &\leq 4\omega(1)\rho_1(d(x,y)) \\
    &\leq 4\omega(1)\|f(x)-f(y)\|.
\end{align*}
Hence, the proper gauge $\omega_0 := \omega/\omega(1)$ is a lower control function with $\omega_0(1)=1$ for the coarse embedding $4f$.
\end{proof}

We are now ready to prove our dictionary.

\begin{proposition}[Coarse-to-Lipschitz Dictionary] \label{prop:dictionary}
Let $n$ be a nonnegative integer, $p\in[1,2]$, and consider the following coarse ($C_1$),($C_2$),($C_3$) and Lipschitz ($L_1$),($L_2$),($L_3$) properties of a metric space $Z$:
\begin{multicols}{2}
\begin{itemize}
    \item[($C_1$)] $Z$ has asymptotic dimension $n$.
    \item[($C_2$)] $Z$ has controlled separation.
    \item[($C_3$)] $Z$ coarsely embeds into $L_p$.
    \item[($L_1$)] $Z$ has Nagata dimension $n$.
    \item[($L_2$)] $Z$ has finite separation modulus.
    \item[($L_3$)] $Z$ biLipschitzly embeds into $L_p$.
\end{itemize}
\end{multicols}
\noindent Let $i\in\{1,2,3\}$ and let $(X,d)$ be a countable, uniformly discrete metric space. Then the following are equivalent:
\begin{enumerate}
    \item $(X,d)$ satisfies ($C_i$).
    \item There exists a proper gauge $\omega$ such that $(X,\omega\circ d)$ satisfies ($L_i$).
    \item There exists a metric space $Y$ coarsely containing $(X,d)$ that satisfies ($L_i$).
\end{enumerate}
\end{proposition}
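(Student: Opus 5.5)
The plan is to prove (1)$\Rightarrow$(2)$\Rightarrow$(3)$\Rightarrow$(1) for each $i$. The implication (2)$\Rightarrow$(3) is immediate: take $Y=(X,\omega\circ d)$, since the identity $(X,d)\to(X,\omega\circ d)$ is a coarse equivalence for any proper gauge (\S\ref{ss:gauges}). For (3)$\Rightarrow$(1) I will use two facts. First, each ($L_i$) implies ($C_i$): Nagata dimension $\le n$ gives asymptotic dimension $\le n$ with $R(r)=\gamma r$; finite separation modulus is controlled separation with $\Delta(\eps)=\sigma/\eps$; a biLipschitz embedding is a coarse embedding. Second, each ($C_i$) passes to subsets and pulls back along coarse embeddings $f:X\to Y$. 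For coarse embeddings into $L_p$ this is just composition. For asymptotic dimension, pull back the covers of $Y$ at scale $\rho_2(r)$; their preimages have diameter at most $\rho_1^{-1}(R(\rho_2(r)))$. For controlled separation, pull back a random $\Delta$-bounded partition of $Y$ at parameter $\eps'$; then
\[
\P\bigl(P(x)\ne P(y)\bigr)\le \eps'\rho_2(d(x,y))\le \eps'\rho_2(1)\,d(x,y)
\]
using 1-separation after rescaling $d$ and the fact that $\rho_2$ may be taken subadditive-like on $[1,\infty)$. Here I will first replace $\rho_2$ by a linear bound; this is possible if I assume $Y$ has ($L_2$) and pass through the gauge trick, or I simply note that $d_Y(f(x),f(y))\le \rho_2(1)d(x,y)$ whenever $X$ is geodesic. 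If that fails for general uniformly discrete $X$, I will instead bound $\rho_2(t)\le C t$ after composing with a concave majorant. One then checks the pieces have bounded diameter. To make the dimension exactly $n$ rather than $\le n$, I note that Nagata/asymptotic dimensions are then compared in both directions.

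The main content is (1)$\Rightarrow$(2), and the engine is the Rescaling Lemma~\ref{lem:rescaling}. Assume without loss of generality that $X$ is 1-separated (rescale $d$).

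For $i=1$, let $R$ be the nondecreasing control function, with $R(r)\ge r$. Apply Lemma~\ref{lem:rescaling} to $\rho=R$ to get $\omega$ with $\omega(R(t))\le 4\omega(t)$ for $t\ge1$. Given a scale $s$ for $\omega\circ d$, take $r=\omega^{-1}(s)$ and use the $r$-cover $\B$. Sets of $\omega\circ d$-diameter $<s$ have $d$-diameter $<r$, so they meet at most $n+1$ members of $\B$. Each $B\in\B$ has $\omega\circ d$-diameter at most $\omega(R(r))\le 4\omega(r)=4s$ when $r\ge1$. Small scales are trivial by 1-separation, using singletons. The lower bound dim $\ge n$ transfers because Nagata dimension $\ge$ asymptotic dimension, and asymptotic dimension is invariant under the coarse equivalence.

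For $i=2$, take a nonincreasing $\Delta$ on $(0,1]$ and set $\rho(t):=\Delta(1/t)$, which is nondecreasing on $[1,\infty)$. Get $\omega$ with $\omega(\Delta(1/t))\le4\omega(t)$. For $\eta>0$, use partitions for $\eps=1/t$ where $\omega(t)\approx 1/\eta$. Their $\omega\circ d$-diameter is then $\le 4/\eta$. The main obstacle is the cut probability: I need
\[
\P\bigl(P(x)\ne P(y)\bigr)\le d(x,y)/t \lesssim \eta\,\omega(d(x,y)).
\]
For $d(x,y)\le t$ this follows from the gauge inequality $\omega(d(x,y))\ge (d(x,y)/t)\,\omega(t)$. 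For $d(x,y)>t$ the bound is trivial, since the right side is $\ge1$.

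For $i=3$, apply Lemma~\ref{lem:lowergauge} to get $f$ with a lower control that is a proper gauge $\rho_1$. The upper control $\rho_2$ is nondecreasing. Apply Lemma~\ref{lem:rescaling} to $\rho_2\circ\rho_1^{-1}$ to get $\omega$, and then consider $\omega\circ\rho_1\circ d$ as the new gauge metric and $\omega\circ f$ in a suitable sense. Since $\omega$ is not linear, I will instead use that $(L_p,\|\cdot\|^{\alpha})$-type snowflakes of $L_p$ embed isometrically into $L_p$ for gauges that are CND-compatible. Concretely, I expect to need that $\omega\circ\|\cdot\|_p$ embeds into $L_p$ for suitable $\omega$ (e.g.\ $\log(1+t)$ or a Bernstein-type gauge). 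This is the step I expect to be hardest, and I would try to restrict the Rescaling Lemma's output to gauges of the form $\int(1-e^{-\lambda t})\,d\mu(\lambda)$, which preserve $L_1$-embeddability (and $L_p$, $p\le2$).
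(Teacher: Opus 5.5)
The gap is in case $i=3$ of (1)$\Rightarrow$(2). Your reduction is the right one. With $\rho_1$ from Lemma~\ref{lem:lowergauge} and $\omega$ from the Rescaling Lemma applied to $\rho_2\circ\rho_1^{-1}$, the map $f$ itself is $4$-biLipschitz from $(X,(\omega\circ\rho_1)\circ d)$ into $(L_p,\omega(\|\cdot\|_p))$. Indeed, $\omega(\rho_1(d(x,y)))\le\omega(\|f(x)-f(y)\|_p)\le\omega(\rho_2(d(x,y)))\le 4\,\omega(\rho_1(d(x,y)))$ for $x\ne y$, using $\rho_1(d(x,y))\ge 1$. But the step that turns this into a biLipschitz embedding into $L_p$ is left unproved.

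What is needed is that $(L_2,\omega(\|\cdot\|_2))$ biLipschitzly embeds into $L_2$ for \emph{every} gauge $\omega$. The paper quotes this from Mendel--Naor and proves the case $p=2$ this way. It handles $p\in[1,2]$ by composing the given coarse embedding with $(L_p,\|\cdot\|_p^{p/2})\hookrightarrow L_2$ at the start and using $L_2\hookrightarrow L_p$ at the end. So no restriction on the output of the Rescaling Lemma is needed.

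If you want to prove the fact yourself, set $\phi(s):=\omega(\sqrt{s})^2$. It is nondecreasing with $\phi(s)/s$ nonincreasing, hence within a factor $2$ of a concave function $as+\int\min(s,u)\,d\nu(u)$. That function is within a factor $e/(e-1)$ of the Bernstein function $\psi(s)=as+\int u(1-e^{-s/u})\,d\nu(u)$. Since $\|x-y\|_2^2$ is conditionally negative definite, so is $\psi(\|x-y\|_2^2)$. Hence $\sqrt{\psi(\|x-y\|_2^2)}$, which is comparable to $\omega(\|x-y\|_2)$, is a Hilbertian metric.

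Your own suggestion, applying Bernstein gauges directly to $\|\cdot\|_p$, only shows that $\psi(\|x-y\|_p)$ is conditionally negative definite. That means its \emph{square root} embeds into $L_2$, which is not an $L_p$-embedding of $\psi(\|x-y\|_p)$ itself when $p>1$. Even for $p=1$ it needs a further cut-cone argument. So the parenthetical claim at the end of your proposal is unsupported.

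Otherwise your proof follows the paper: (2)$\Rightarrow$(3) via the identity map, (3)$\Rightarrow$(1) via ($L_i$)$\Rightarrow$($C_i$) plus inheritance, and your cases $i=1,2$ of (1)$\Rightarrow$(2) are correct. Of these the paper writes out only $i=2$. It compresses your two cases into $q\le\eta\,\omega(q\,t)\le\eta\,\omega(d(x,y))$, where $q$ is the cut probability and $\omega(t)=1/\eta$. You should also note that for $1/\eta<\omega(1)$ the partition into singletons works.

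Two side remarks on (3)$\Rightarrow$(1).

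\begin{itemize}
\item Your argument for pulling back controlled separation is wrong as written. For a non-geodesic uniformly discrete $X$, $\rho_2$ can be genuinely superlinear, and no concave majorant yields $\rho_2(t)\le Ct$. The easy fix is that only pairs with $1\le d(x,y)\le 1/\eps$ matter. Pulling back a random partition of $Y$ at parameter $\theta=\eps/\max\{1,\rho_2(1/\eps)\}$ gives cut probability at most $\theta\,\rho_2(d(x,y))\le\eps\le\eps\,d(x,y)$ for those pairs.
\item For $i=1$, (3)$\Rightarrow$(1) only gives asymptotic dimension $\le n$ (think of $\Z\subset\Z^2$). So dimensions are not ``compared in both directions'', and the statement has to be read with ``$\le n$''. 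Your exactness argument is fine for (1)$\Rightarrow$(2).
\end{itemize}
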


\begin{proof}
The implication $(2) \implies (3)$ follows from the fact that the identity map $(X,d) \to (X,\omega \circ d)$ is a coarse equivalence whenever $\omega$ is a proper gauge. The implication $(3) \implies (1)$ follows from the facts that ($L_i$) implies ($C_i$) and that ($C_i$) is inherited under coarse embeddings. We will prove $(1)\implies (2)$. We will only provide details for the cases $i=2,3$, as the case $i=1$ follows from an argument similar to that for $i=2$. By multiplying $d$ by a constant, we may assume that $(X,d)$ is 1-separated.

\fbox{$i=2$}: Assume that $X$ has controlled separation. Let $\Delta: (0,1]\to[0,\infty)$ be a nonincreasing separation control function (as mentioned in \S\ref{sec:prelims}, we may take the domain of $\Delta$ to be $(0,1]$ here since $X$ is 1-separated). Apply the Rescaling Lemma~\ref{lem:rescaling} to the nondecreasing function $\rho(t) = \Delta(t^{-1})$ to obtain a proper gauge $\omega: [0,\infty) \to [0,\infty)$ with $\omega(\Delta(t^{-1})) \leq 4\omega(t)$ for all $t \in [1,\infty)$. We will verify that the separation modulus $\sigma$ of $(X,\omega \circ d)$ is at most 4.

Let $\eps \in (0,1]$. By dividing $\omega$ by $\omega(1)$, we may assume that $\omega(1)=1$. Set $t := \frac{1}{\omega^{-1}(\eps^{-1})}$, and note that $t \leq 1$ since $\eps\leq 1$, $\omega(1)=1$, and $\omega$ is increasing. Choose a Radon probability measure $\P$ on the set of $\Delta(t)$-bounded partitions of $(X,d)$ such that
\begin{equation} \label{eq:randompartition}
    \P_P(P(x)\neq P(y)) \leq t d(x,y)
\end{equation}
for every $x,y \in X$. Then $\P$ is also a Radon probability measure on the set of $D$-bounded partitions of the 1-separated metric space $(X,\omega \circ d)$, where
$$D = \omega(\Delta(t)) \leq 4\omega(t^{-1}) = \frac{4}{\eps}.$$

Fix $x\neq y \in X$. Then, using the facts that $s\omega(t) \leq \omega(st)$ for all $s \leq 1$ and that $\omega$ is increasing, we have
\begin{align*}
    \P_P(P(x)\neq P(y)) &= \eps\P_P(P(x)\neq P(y))\eps^{-1} \\
    &= \eps\P_P(P(x)\neq P(y))\omega(t^{-1}) \\
    &\leq \eps\omega(\P_P(P(x)\neq P(y))t^{-1}) \\
    &\overset{\eqref{eq:randompartition}}{\leq} \eps\omega(d(x,y)).
\end{align*}
This completes the proof for $i=2$.

\fbox{$i=3$}: We first prove the case $p=2$ and then explain how to extend to general $p\in[1,2]$. Assume that $(X,d)$ coarsely embeds into $L_2$. By Lemma~\ref{lem:lowergauge}, there is a function $f: X \to L_2$, a proper gauge $\rho_1: [0,\infty)\to[0,\infty)$ with $\rho_1(1)= 1$, and a nondecreasing function $\rho_2: [0,\infty) \to [0,\infty)$ such that
$$\rho_1(d(x,y)) \leq \|f(x)-f(y)\|_2 \leq \rho_2(d(x,y))$$
for all $x,y \in X$. Apply the Rescaling Lemma~\ref{lem:rescaling} to $\rho_2\circ\rho_1^{-1}$ to obtain a proper gauge $\omega$ with $\omega(\rho_2(\rho_1^{-1}(t))) \leq 4\omega(t)$ for every $t \geq 1$. Then, for every $x\neq y\in X$, we have
\begin{equation*}
    \omega(\rho_1(d(x,y))) \leq \omega(\|f(x)-f(y)\|_2) \leq \omega(\rho_2(d(x,y)))
\end{equation*}
and
\begin{equation*}
    \omega(\rho_2(d(x,y))) = \omega(\rho_2(\rho_1^{-1}(\rho_1(d(x,y))))) \leq 4\omega(\rho_1(d(x,y))).
\end{equation*}
The above inequalities imply that $(X,(\omega\circ\rho_1)\circ d)$ admits a 4-biLipschitz embedding into $(L_2,\omega(\|\cdot\|_2))$, and this latter space itself biLipschitzly embeds into $L_2$ by \cite[Remark~5.4]{MNquotients}. This completes the proof for the case of $p=2$. The proof for general $p\in[1,2]$ follows from the case $p=2$ and the following two facts (see \S\ref{ss:Banach}): (i) $L_2$ isometrically embeds into $L_p$ and (ii) $(L_p,\|\cdot\|_2^{p/2})$ isometrically embeds into $L_2$.
\end{proof}

\section{Coarse Embeddability Implies Equivariant Coarse Embeddability} \label{sec:equivariantembeddability}

In this section, we prove Theorem~\ref{thm:embedding=>equivariantembedding}: a group having a proper, uniformly Lipschitz, affine action on a subspace of $L_1$ is equivalent to the seemingly weaker property of admitting a coarse embedding into $L_1$.

Recall that, if $E$ is a subspace of $L_1$, then the kernel $X \times X \ni (x,y)\mapsto \|f(x)-f(y)\|_1 \in E \sbs L_1$ is CND. If, in addition, this kernel is almost invariant, then one can use Theorem~\ref{thm:AICNDK} in order to construct an affine action. However, one should not expect this strong condition to be satisfied for a general coarse embedding. The way to remedy this situation is to apply the Coarse-to-Lipschitz Dictionary (Proposition~\ref{prop:dictionary}) in order to introduce a sufficient amount of invariance in order to apply Theorem~\ref{thm:AICNDK}. Before proving Theorem~\ref{thm:embedding=>equivariantembedding} in full generality, we give a simplified argument in the case where $\Gamma$ embeds into $L_1$ with positive compression exponent. In this case, the proof is more self-contained (relies only on Theorem~\ref{thm:AICNDK} and not on Proposition~\ref{prop:dictionary}), and one can obtain quantitative lower bounds on the properness of the corresponding CND kernel (Remark~\ref{rmk:orbitdistortion}).

\subsection{A simple argument for groups with positive compression exponent}

We begin by illustrating our argument in a particular case allowing for a simplified proof. Following \cite{NaoPer}, we define the $L_1$-compression exponent of a separable metric space $(X,d)$ as the supremum of all $\alpha\in[0,1]$ such that there is a Lipschitz map $f:X\to L_1$, and a constant $c>0$ such that
\begin{align*}
	\|f(x)-f(y)\|_1\geq c d(x,y)^\alpha
\end{align*}
for all $x,y\in X$. We denote this exponent by $\alpha_1^*(X)$. If $\Gamma$ is a finitely generated group, we define $\alpha_1^*(\Gamma)$ to be $\alpha_1^*(\Gamma,d_\Gamma)$ where $d_\Gamma$ is a word metric.

\begin{proposition}\label{prop:positive_ce}
Let $\Gamma$ be a finitely generated group with $L_1$-compression exponent $\alpha_1^*(\Gamma)>0$. Then $\Gamma$ admits a proper, almost invariant CND kernel.
\end{proposition}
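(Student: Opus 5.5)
Fix $0<\alpha<\alpha_1^*(\Gamma)$, so there is a Lipschitz map $f:\Gamma\to L_1$ and constants $c,L>0$ with
\begin{align*}
 c\,d_\Gamma(x,y)^\alpha \le \|f(x)-f(y)\|_1 \le L\, d_\Gamma(x,y)
\end{align*}
for all $x,y\in\Gamma$. The plan is to snowflake the embedding so that both control functions become comparable powers of the word metric. Left-invariance of $d_\Gamma$ then gives almost invariance for free. For $\beta\in(0,1]$, the metric $\|u-v\|_1^\beta$ on $L_1$ again embeds isometrically into $L_1$; in kernel language, $\Phi^\beta$ is CND whenever $\Phi$ is CND with $\Phi\ge0$ and $0<\beta\le1$. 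This is Schoenberg's theorem, e.g.\ via the integral representation $t^\beta = c_\beta\int_0^\infty (1-e^{-st})s^{-1-\beta}\,ds$ together with positive definiteness of $e^{-s\Phi}$. So the first step is to take $\Phi_0(x,y):=\|f(x)-f(y)\|_1$, which is CND as recalled above.

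The key issue is that the two-sided bound $c\,d^\alpha\le\Phi_0\le L\,d$ is not symmetric enough: $\Phi_0(gx,gy)\le L\,d(x,y)$, but we only know $\Phi_0(x,y)\ge c\,d(x,y)^\alpha$, so the ratio $\Phi_0(gx,gy)/\Phi_0(x,y)$ could be as large as $d^{1-\alpha}$. To fix this, I would not use $\Phi_0$ directly. Instead, I would build an averaged kernel that is sandwiched between two comparable functions of $d_\Gamma$.

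The most promising route is to use a kernel that depends only on $d_\Gamma$ up to constants. Set $\Psi(x,y):=\sup_{g\in\Gamma}\Phi_0(gx,gy)$. This is bounded by $L\,d(x,y)$ and bounded below by $c\,d(x,y)^\alpha$, and it is exactly invariant. However, a supremum of CND kernels need not be CND, so this does not work as it stands. A fix is to replace the supremum by an average along an invariant mean, but $\Gamma$ need not be amenable.

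The alternative I would commit to is an exponent trick. Since $c\,d^\alpha\le\Phi_0\le L\,d$ and $d\ge1$ off the diagonal, we have
\begin{align*}
 \Phi_0(gx,gy)\le L\,d(x,y)\le L\,c^{-1/\alpha}\,\Phi_0(x,y)^{1/\alpha}.
\end{align*}
This is polynomial rather than multiplicative control. Now pass to $\Phi:=\log(1+\Phi_0)$, which is CND as in the proof of Theorem~\ref{thm:AICNDK} and proper because $\Phi_0\ge c\,d^\alpha$. Then
\begin{align*}
 \Phi(gx,gy)\le \log\!\bigl(1+Lc^{-1/\alpha}\Phi_0(x,y)^{1/\alpha}\bigr)\le C'+\alpha^{-1}\Phi(x,y).
\end{align*}
Off the diagonal we have $\Phi(x,y)\ge\log(1+c)>0$, so the additive constant is absorbed, giving $\Phi(gx,gy)\le C\,\Phi(x,y)$ for all $g,x,y$. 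On the diagonal both sides vanish, so the inequality holds there too. Hence $\Phi$ is a proper, multiplicatively almost invariant CND kernel.

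The main obstacle is recognizing that a naive snowflake cannot equalize the two exponents, and that the logarithm converts the polynomial distortion $d^\alpha$ versus $d$ into a bounded multiplicative factor $1/\alpha$. The remaining checks are routine: that $\log(1+\Phi_0)$ is CND, which follows from Schoenberg via $\log(1+t)=\int_0^\infty(1-e^{-st})e^{-s}s^{-1}\,ds$, and properness.
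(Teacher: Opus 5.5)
Your proof is correct and is essentially the paper's argument: the paper also passes to $\Phi(x,y)=\log(1+\|f(x)-f(y)\|_1)$ and uses the polynomial gap between $c\,d_\Gamma^\alpha$ and $C d_\Gamma$ to get multiplicative almost invariance. The only cosmetic difference is how the constant is handled. The paper fixes $\beta$ with $Ct+1\le(ct^\alpha+1)^\beta$ for all $t>0$ and gets $\Phi(gx,gy)\le\beta\Phi(x,y)$ directly, whereas you first obtain $\Phi(gx,gy)\le C'+\alpha^{-1}\Phi(x,y)$ and then absorb $C'$ using $\Phi\ge\log(1+c)$ off the diagonal.
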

\begin{proof}
Let $\alpha\in(0,\alpha_1^*(\Gamma))$, and let $f:\Gamma\to L_1$ be a Lipschitz map such that, for all $x,y\in\Gamma$,
\begin{align*}
	\|f(x)-f(y)\|_1 \geq c d_\Gamma(x,y)^\alpha,
\end{align*}
where $c>0$ is a constant. Since $f$ is Lipschitz, there is another constant $C>0$ such that
\begin{align*}
	\|f(x)-f(y)\|_1 \leq C d_\Gamma(x,y).
\end{align*}
Let us define a kernel $\Phi:\Gamma\times\Gamma\to[0,\infty)$ by
\begin{align*}
	\Phi(x,y)=\log(	\|f(x)-f(y)\|_1+1)
\end{align*}
for all $x,y\in\Gamma$. Since $f$ is a coarse embedding into $L_1$, $(x,y) \mapsto \|f(x)-f(y)\|_1$ is a proper CND kernel, and thus $\Phi$ is also a proper CND kernel (see \cite[Lemma~2.2]{Ver} for details). Moreover, for all $g,x,y\in\Gamma$,
\begin{align} \label{eq:compression}
	\Phi(gx,gy) = \log(\|f(gx)-f(gy)\|_1+1) \leq  \log(C d_\Gamma(x,y)+1).
\end{align}
Now fix $\beta>1$ large enough so that
\begin{align*}
	Ct+1\leq\left(ct^\alpha+1\right)^\beta
\end{align*}
for all $t>0$. Then \eqref{eq:compression} yields
\begin{align*}
	\Phi(gx,gy) &\leq  \log\left(\left(cd_\Gamma(x,y)^\alpha+1\right)^\beta\right)\\
	&\leq \beta\log(\|f(x)-f(y)\|_1+1)\\
	&= \beta\Phi(x,y).
\end{align*}
This shows that $\Phi$ is almost invariant.
\end{proof}

\begin{remark} \label{rmk:orbitdistortion}
Inspecting the proof, one can see that the almost invariant CND kernel $\Phi$ constructed in the proof above satisfies $\Phi(e,\gamma) \gtrsim \log (d_\Gamma(e,\gamma))$.
\end{remark}

\subsection{Proof of Theorem~\ref{thm:embedding=>equivariantembedding}} \label{ss:embedding=>equivariantembedding}
We now prove Theorem~\ref{thm:embedding=>equivariantembedding}, which states that if $\Gamma$ is a countable discrete group, then it admits a proper, uniformly Lipschitz action on a subspace of $L_1$ if and only if it coarsely embeds into $L_1$.

\begin{proof}[Proof of Theorem~\ref{thm:embedding=>equivariantembedding}]
The ``only if" direction is trivial, because if $\Gamma$ admits such an action on $E \sbs L_1$, then the orbit map $\Gamma \ni \gamma \to \gamma\cdot 0 \in E \sbs L_1$ is a coarse embedding.

For the ``if" direction, assume that $(\Gamma,d_\Gamma)$ coarsely embeds into $L_1$. The discreteness of $\Gamma$ and left-invariance of $d_\Gamma$ imply that $(\Gamma,d_\Gamma)$ is uniformly discrete. Thus, by Proposition~\ref{prop:dictionary}, there is a proper gauge $\omega$ and a biLipschitz embedding $f: (\Gamma,\omega \circ d_\Gamma) \to L_1$. By dilating the map, we may assume that there is a $C<\infty$ such that
$$\omega(d(x,y)) \leq \|f(x)-f(y)\|_1 \leq C\omega(d(x,y))$$
for all $x,y\in\Gamma$. Then the proper CND kernel $\Phi(x,y) := \|f(x)-f(y)\|_1$ is almost invariant. Indeed, for any $g,x,y\in \Gamma$, we have
$$\Phi(gx,gy) \leq C\omega(d(gx,gy)) = C\omega(d(x,y)) \leq C\Phi(x,y).$$
Hence, by Theorem~\ref{thm:AICNDK}, $\Gamma$ admits a proper, uniformly Lipschitz affine action on a subspace of $L_1$.
\end{proof}

\section{Property A, Separation Modulus, and Stochastic Embeddings} \label{sec:PropA}

In this section, we prove the implications $(1)\implies(2)\implies(3)$ in Theorem \ref{thm:mainextended}.

\subsection{Property A and separation modulus}
The implication $(1)\implies(2)$ in Theorem~\ref{thm:mainextended} is quite simple with the help of Elek's characterization of Property A (Theorem~\ref{thm:Elek}) and the Coarse-to-Lipschitz Dictionary (Proposition~\ref{prop:dictionary}).

\begin{theorem} \label{thm:PropA=>separation}
Let $X$ be a connected, bounded degree graph and $d$ the shortest path metric on $X$. If $X$ has Property A, then there exists a proper gauge $\omega$ such that $\LF(X,\omega\circ d)$ has finite separation modulus.
\end{theorem}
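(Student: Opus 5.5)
The statement as written says ``$\LF(X,\omega\circ d)$ has finite separation modulus'', but the intended content is item (2) of Theorem~\ref{thm:mainextended}: $(X,\omega\circ d)$ itself has finite separation modulus. I will prove that. The route is three results already established: Elek's Theorem~\ref{thm:Elek}, then Lemma~\ref{lem:controlledseparation}, then the case $i=2$ of the Coarse-to-Lipschitz Dictionary (Proposition~\ref{prop:dictionary}).

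\begin{enumerate}
\item Since $X$ has Property A, Theorem~\ref{thm:Elek} shows that $X$ is locally strongly hyperfinite. So for every $\eps>0$ there is $k_\eps\in\N$ such that every finite induced subgraph of $X$ is strongly $(\eps,k_\eps)$-hyperfinite.
\item Lemma~\ref{lem:controlledseparation} then gives that $(X,d)$ has controlled separation, with control function $\Delta(\eps)=k_\eps-1$. Its proof pushes the random separator forward to a random partition via $\Phi$ and applies a union bound along geodesics, after enlarging a finite set $H$ to the union $H'$ of geodesics between its points.
\item The graph $X$ is connected with bounded degree, so it is countable, and its shortest path metric is $1$-separated, hence uniformly discrete. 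The hypotheses of Proposition~\ref{prop:dictionary} therefore hold.
\item Applying the implication $(1)\Rightarrow(2)$ of that proposition with $i=2$ gives a proper gauge $\omega$ such that $(X,\omega\circ d)$ has finite separation modulus. Its proof in fact bounds the separation modulus by $4$.
\end{enumerate}

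Each step is a direct citation, so there is no genuine obstacle. The only points to check are mild technicalities.

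\begin{itemize}
\item \emph{Domain of $\Delta$.} Since $X$ is $1$-separated, $\Delta$ may be taken to live on $(0,1]$. It can also be chosen nonincreasing, so that $\rho(t)=\Delta(t^{-1})$ is nondecreasing and the Rescaling Lemma~\ref{lem:rescaling} applies.
\item \emph{Finite versus infinite $X$.} Controlled separation was verified only on finite subsets. The definition of controlled separation stipulates that checking finite subsets suffices, by compactness of the space of $\Delta$-bounded partitions.
\end{itemize}

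If one preferred to avoid this compactness step, one could work with finite subsets throughout: the separation modulus is a supremum over finite subsets in the same sense, and the gauge $\omega$ depends only on $\Delta$, not on the subset.
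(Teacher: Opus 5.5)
Your proposal is correct and follows exactly the paper's proof: Theorem~\ref{thm:Elek} gives local strong hyperfiniteness, Lemma~\ref{lem:controlledseparation} then gives controlled separation, and the case $i=2$ of Proposition~\ref{prop:dictionary} produces the proper gauge $\omega$. You are also right that the intended conclusion is that $(X,\omega\circ d)$ itself, not $\LF(X,\omega\circ d)$, has finite separation modulus; this is what the paper's proof establishes and what item (2) of Theorem~\ref{thm:mainextended} requires.
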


\begin{proof}
Assume that $X$ has Property A. Then $X$ is locally strongly hyperfinite by Theorem~\ref{thm:Elek}. Then $(X,d)$ has controlled separation by Lemma~\ref{lem:controlledseparation}. Then by Proposition~\ref{prop:dictionary}, there exists a proper gauge $\omega$ such that $(X,\omega \circ d)$ has finite separation modulus.
\end{proof}

\subsection{Separation modulus and stochastic embeddings}

A strong connection between separation modulii and stochastic embeddings into ultrametric spaces was already clear and used in their first appearance \cite{Bartal}. Using the language of $L_p$- and weak-$L_p$-stochastic distortion, we provide two lemmas that, we believe, makes their connection even more clear. Clearly, by Markov's inequality, for any random noncontractive map $\phi$, we have that $\|\phi\|_{w,p} \leq \|\phi\|_p$. Of course, the reverse inequality $\|\phi\|_{p} \leq C\|\phi\|_{w,p}$ is not true with any uniform constant $C<\infty$. However, it does become true after snowflaking the metrics. This is an elementary and purely measure-theoretic fact which we prove in the next lemma.

\begin{lemma} \label{lem:weaksnowflake}
Suppose $p\in(0,\infty)$ and $(\phi,\P)$ is a random noncontractive map between finite, pointed metric spaces $(X,d_X),(Y,d_Y)$ with weak-$L_p$-stochastic distortion $D <\infty$. Then for each $\eps \in (0,1)$, the same random map $(\phi,\P)$ is a random noncontractive map from $(X,d_X^{1-\eps})$ into $(Y,d_Y^{1-\eps})$ with $L_p$-stochastic distortion at most $D/\eps^{1/p}<\infty$. In particular, if a finite metric space $(X,d_X)$ admits a weak-$L_p$-stochastic embedding into the class of ultrametric spaces with weak-$L_p$-stochastic distortion $\leq D$, then $(X,d_X^{1-\eps})$ admits a stochastic embedding into the class of ultrametric spaces with $L_p$-stochastic distortion $< D/\eps^{1/p}$.
\end{lemma}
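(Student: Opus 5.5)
Noncontractivity is immediate: if $d_Y(\phi_i(x),\phi_i(y)) \geq d_X(x,y)$, then raising to the power $1-\eps$ preserves the inequality. The rest of the plan is to express the stochastic distortion function of the snowflaked map through that of the original map, and then integrate using only the weak-$L_p$ tail bound.

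Fix $x\neq y\in X$ and write $R_i := d_Y(\phi_i(x),\phi_i(y))/d_X(x,y) \geq 1$. For the snowflaked metrics, the event in $D_\phi(x,y)(t)$ is $R_i^{1-\eps} \geq t$, i.e. $R_i \geq t^{1/(1-\eps)}$. Hence the new distortion function satisfies
\[
\widetilde D_\phi(x,y)(t) = D_\phi(x,y)\bigl(t^{1/(1-\eps)}\bigr) \leq \min\bigl\{1,\, D^p t^{-p/(1-\eps)}\bigr\},
\]
using $\|\phi\|_{w,p}\le D$ for $t\ge1$ and the trivial bound $1$. (I take the convention that $D_\phi(x,y)(t)=1$ for $t<1$ by noncontractivity, so the integral over $[0,1]$ contributes at most $1$.)

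Next, integrate:
\[
\int_0^\infty pt^{p-1}\widetilde D_\phi(x,y)(t)\,dt \leq \int_0^{1} pt^{p-1}\,dt + D^p\int_1^\infty p\,t^{p-1-p/(1-\eps)}\,dt .
\]
The exponent is $p-1-p/(1-\eps) = -1 - p\eps/(1-\eps)$, so the second integral equals $D^p(1-\eps)/\eps$. This gives a bound of $1 + D^p(1-\eps)/\eps$.

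To reach the clean constant $D^p/\eps$, I will use that $D\ge 1$. This holds because at $t=1$ noncontractivity gives $D_\phi(x,y)(1)=1$, so $D^p\ge 1$. Then $1 + D^p(1-\eps)/\eps \le D^p + D^p(1-\eps)/\eps = D^p/\eps$. Taking the supremum over $x,y$ and then $p$-th roots gives $L_p$-stochastic distortion at most $D/\eps^{1/p}$.

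The "in particular" statement follows by applying this to each random map into ultrametric targets. A power $d^{1-\eps}$ of an ultrametric is again an ultrametric, since $s\mapsto s^{1-\eps}$ is increasing and so preserves $d(x,z)\le\max\{d(x,y),d(y,z)\}$. So the snowflaked maps still land in the class of ultrametric spaces. The only delicate point is the bookkeeping of the convention on $[0,1)$ and the strictness of the final inequality. If strictness is needed, note that the tail bound is not attained over the whole of $[1,\infty)$ (for instance, it fails for large $t$ on a finite space), so the inequality can be made strict; otherwise the non-strict version suffices.
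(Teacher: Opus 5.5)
Your proposal is correct and follows the paper's proof. Both rewrite the distortion function of the snowflaked map as $D_\phi(x,y)(t^{1/(1-\eps)})$, bound it on $[1,\infty)$ by the weak tail $D^p t^{-p/(1-\eps)}$, and integrate to get $1+D^p(1-\eps)/\eps$; the paper does this via the substitution $t\mapsto t^{1-\eps}$, while you integrate directly. Your explicit use of $D\ge 1$, together with the finiteness argument for strictness, is slightly more careful than the paper, whose final step $1+D^p(1-\eps)/\eps<D^p/\eps$ silently requires $D>1$.
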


\begin{proof}
Let $\eps\in(0,1)$. First, quickly notice that the second conclusion follows from the first and the immediate fact that $d_Y^{1-\eps}$ is an ultrametric whenever $d_Y$ is an ultrametric.

Clearly, each map $\phi_i$ in $\phi = \{\phi_i\}_{i=1}^n$ is noncontractive as map from $(X,d_X^{1-\eps})$ into $(Y,d_Y^{1-\eps})$ since it is noncontractive as map from $(X,d_X)$ into $(Y,d_Y)$. Then we have
\begin{align*}
    \|\{\phi_i: (X,d_X^{1-\eps}) \to (Y,d_Y^{1-\eps})\}_{i=1}^n\|_{p}^p &= \sup_{x,y\in X}\int_0^\infty pt^{p-1}D_\phi(x,y)(t^{\frac{1}{1-\eps}})dt \\
    &= 1+\sup_{x,y\in X}\int_1^\infty pt^{p-1}D_\phi(x,y)(t^{\frac{1}{1-\eps}})dt \\
    &= 1+\sup_{x,y\in X}\int_1^\infty pt^{(1-\eps)(p-1)}D_\phi(x,y)(t)(1-\eps)t^{-\eps}dt \\
    &\leq 1+\int_1^\infty pt^{(1-\eps)(p-1)}t^{-p}D^p(1-\eps)t^{-\eps}dt \\
    &= 1+\int_1^\infty pt^{-1-\eps p}D^p(1-\eps)dt \\
    &= 1+D^p\frac{1-\eps}{\eps} < \frac{D^p}{\eps}.
\end{align*}
\end{proof}

The next lemma is implicit in multiple articles (e.g., \cite{Bartal,FRT}) but seems to evade explicit statement. We provide the complete proof for self-containment.

\begin{lemma} \label{lem:separationmodulusweak}
Suppose $(X,d)$ is a finite metric space with separation modulus $\sigma <\infty$. Then $X$ admits a weak-$L_1$-stochastic embedding into the class of ultrametric spaces with weak-$L_1$-stochastic distortion $\leq 64\sigma$.
\end{lemma}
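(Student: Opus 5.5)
We build the ultrametric by the standard hierarchical random-partition construction of \cite{Bartal}. After rescaling $d$, assume the minimum nonzero distance of $X$ is at least $1$, and let $\Delta_0 := \diam(X)$. For each scale $k\geq 0$ set $\Delta_k := 2^{-k}\Delta_0$, and stop at the first $K$ with $\Delta_K<1$. For $\eps_k := \sigma/\Delta_k$, the definition of separation modulus gives a probability measure $\P_k$ on $\Delta_k$-bounded partitions of $X$ with
\[
\P_k(P(x)\neq P(y))\leq \frac{\sigma}{\Delta_k}\,d(x,y).
\]
When $\sigma/\Delta_k>1$ this bound is trivial, and we may take $P$ to be any $\Delta_k$-bounded partition, for instance the partition into singletons. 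We sample $P_0,\dots,P_K$ independently and form the common refinements $Q_k := P_0\wedge\cdots\wedge P_k$. This gives a nested hierarchy in which $Q_k$ is $\Delta_k$-bounded and $Q_K$ consists of singletons. We then define
\[
\rho(x,y) := 2\Delta_{k(x,y)},
\]
where $k(x,y)$ is the least $k$ with $Q_k(x)\neq Q_k(y)$ (for $x\neq y$ such a $k\le K$ exists). Nestedness makes $\rho$ an ultrametric. It is also noncontractive: if $x,y$ are first separated at level $k$, they lie together in $Q_{k-1}$, so $d(x,y)\leq\Delta_{k-1}=2\Delta_k=\rho(x,y)$, and for $k=0$ we have $\rho=2\Delta_0\ge d$. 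The random map is the identity of $X$ into $(X,\rho)$. It has finitely many outcomes because the partition spaces of a finite set are finite, and its law is the product of the $\P_k$.

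The main step is the weak tail bound $t\,\P(\rho(x,y)\geq t\,d(x,y))\leq 64\sigma$ for all $t\geq1$. Fix $x\neq y$ and $t\ge 1$. The event $\rho(x,y)\geq t\,d(x,y)$ means $2\Delta_{k(x,y)}\geq t\,d(x,y)$, which forces $x,y$ to be separated at some level $j$ with $\Delta_j\geq t\,d(x,y)/2$. A union bound gives
\[
\P\bigl(\rho(x,y)\geq t\,d(x,y)\bigr)\leq \sum_{j:\ \Delta_j\geq t d(x,y)/2}\P_j\bigl(P_j(x)\neq P_j(y)\bigr)\leq \sum_{j:\ \Delta_j\ge t d/2}\frac{\sigma d(x,y)}{\Delta_j}.
\]
The sum on the right is geometric and dominated by its smallest admissible $\Delta_j$, which is at least $t\,d/2$. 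Hence the sum is at most $2\cdot\sigma d\cdot 2/(t d)=4\sigma/t$. This already gives weak distortion $4\sigma\le64\sigma$.

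The one delicate point is the levels where $\sigma/\Delta_j>1$ and the partition is arbitrary. At those levels the bound $\min(1,\sigma d/\Delta_j)$ is still valid, so the union bound is unaffected. The generous constant $64$ leaves room for rounding and for any convention in which the ultrametric is defined by $\rho=\Delta_{k-1}$ or uses dyadic scales aligned with powers of $2$ rather than with $\diam(X)$. Finally, for $1\le t$ with $t<2$ the claim is trivial whenever $4\sigma\ge1$. If $\sigma$ were small, the tail estimate above covers every $t$ anyway, so no separate case is needed.
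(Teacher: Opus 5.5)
Your proof is correct and is essentially the paper's argument: it uses the same Bartal construction, with independent $\Delta_j$-bounded random partitions at dyadic scales and the ultrametric equal to twice the scale of first separation (your $\rho$ coincides with the paper's $\max\{2^{k+1}:P_k(x)\neq P_k(y)\}$ up to the choice of scales), followed by a union bound over the admissible levels. Your direct geometric-sum estimate actually gives the sharper constant $4\sigma$; the closing remark about small $\sigma$ is unnecessary, since any space with two points has $\sigma\geq 1$ and your tail bound holds for every $t\geq 1$ anyway.
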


\begin{proof}
We use the original construction of \cite{Bartal}. In the construction of $(\phi_i: X\to U_i)_i$, it will be the case that each target space $U_i$ is simply equal to $X$ as a set, but equipped with a different metric, and each map $\phi_i$ is the identity map.

Let $k_{min},k_{max} \in \Z$ such that, for all $x\neq y \in M$, $2^{k_{min}} < d(x,y) \leq 2^{k_{max}}$. For each $k \in [k_{min},k_{max}] \cap \Z$, choose a probability measure $\P^k$ on the set of $2^k$-bounded partitions of $(X,d)$, which we denote by $\PART_k$, such that, for each $x,y \in X$, $\P^k_P(P(x) \neq P(y)) \leq \sigma 2^{-k}d(x,y)$. Observe that $\PART_{k_{min}}$ contains only the singleton-set partition. We equip the product set $\Pi_{k=k_{min}}^{k_{max}} \PART_k$ with the product probability measure $\Pi_{k=k_{min}}^{k_{max}} \P^k$, which we abbreviate by $\P$.

There is a map $\Phi$ from $\Pi_{k=k_{min}}^{k_{max}} \PART_k$ to the set of ultrametrics $d_U$ on $X$ with $d_U(x,y) \geq d(x,y)$ for every $x,y \in X$. Let us denote this set by $\U_{\geq d}(X)$. Namely, we define
$$\Phi((P_k)_{k=k_{min}}^{k_{max}})(x,y) := \max\{2^{k+1}: P_k(x) \neq P_k(y)\}.$$
It is clear that $\Phi((P_k)_{k=k_{min}}^{k_{max}}) \in \U_{\geq d}(X)$. We think of the pushforward of the product probability measure $\Phi_\# \P$ on $\U_{\geq d}(X)$ to be the random noncontractive map from $X$ into the class of ultrametric spaces. It remains to calculate the weak-$L_1$-stochastic distortion of this random map.

Choose $\ell \in \N$ with $2^{\ell-1} \leq \sigma \leq 2^\ell$. Let $x\neq y \in X$, and choose $m \in [k_{min},k_{max}] \cap \Z$ with $2^{m-1} \leq d(x,y) \leq 2^{m}$. Let $t \geq 1$, and choose $n \in \N$ with $2^{n-1} \leq t \leq 2^n$. Then we have
\begin{align*}
    t (\Phi_\# \P)_{d_U} \left(d_U(x,y) \geq td(x,y) \right) &= t \P_{(P_k)_k} \left(\max\left\{2^{k+1}: P_k(x) \neq P_k(y)\right\} \geq td(x,y) \right) \\
    &\leq 2^n \P_{(P_k)_k} \left(\max\left\{2^{k+1}: P_k(x) \neq P_k(y)\right\} \geq 2^{n+m-2} \right) \\
    &= 2^n (1-\P_{(P_k)_k} \left(\max\left\{2^{k+1}: P_k(x) \neq P_k(y)\right\} < 2^{n+m-2} \right)) \\
    &= 2^n \left(1-\Pi_{k=n+m-3}^{k_{max}} \P^k_{P_k} (P_k(x) = P_k(y))\right) \\
    &\leq 2^n \left(1-\Pi_{k=n+m-3}^{k_{max}} \max\{0,1-2^{\ell+m-k}\}\right) \\
    &= \begin{cases}
        2^n & n \leq \ell+3 \\
        2^n \left(1-\Pi_{k=n+m-3}^{k_{max}}(1-2^{\ell+m-k})\right) & n > \ell+3
    \end{cases}  \\
    &\leq \begin{cases}
        2^n & n \leq \ell+3 \\
        2^{n+1} \sum_{k=n+m-3}^{\infty} 2^{\ell+m-k} & n > \ell+3
    \end{cases}  \\
    &= \begin{cases}
        2^n & n \leq \ell+3 \\
        2^{\ell+5} & n > \ell+3
    \end{cases}  \\
    &\leq 2^{\ell+5} \leq 2^6\sigma.
\end{align*}
\end{proof}

We now prove $(2) \implies (3)$ in Theorem~\ref{thm:mainextended}.

\begin{theorem} \label{thm:separation=>stochasticembedding}
Let $(X,d)$ be a metric space. If there exists a proper gauge $\omega$ such that $(X,\omega\circ d)$ has finite separation modulus, then there exists a proper gauge $\omega_0$ such that $(X,\omega_0 \circ d)$ admits an $L_1$-stochastic embedding into the class of ultrametric spaces.
\end{theorem}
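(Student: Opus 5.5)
The plan is to combine Lemma~\ref{lem:separationmodulusweak} with Lemma~\ref{lem:weaksnowflake}, taking $\omega_0 := \omega^{1/2}$, i.e.\ $\omega_0(t) = \omega(t)^{1/2}$. This is again a proper gauge: it is the composition of the proper gauges $s\mapsto s^{1/2}$ and $\omega$, and $s\mapsto s^{1/2}$ is concave and nondecreasing, so concavity is preserved. Let $\sigma<\infty$ be the separation modulus of $(X,\omega\circ d)$.

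By the definition in Definition~\ref{def:stochasticembedding}, it suffices to produce a single constant $D<\infty$ such that every finite subset $F\sbs X$, equipped with $\omega_0\circ d$, admits an $L_1$-stochastic embedding into ultrametric spaces with distortion at most $D$. Fix such a finite $F$. Separation modulus passes to subsets: restricting a random partition of $X$ to $F$ preserves the diameter bounds and the separation probabilities. Hence $(F,\omega\circ d)$ has separation modulus at most $\sigma$.

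Lemma~\ref{lem:separationmodulusweak} then gives a weak-$L_1$-stochastic embedding of $(F,\omega\circ d)$ into the class of ultrametric spaces, with weak distortion at most $64\sigma$. Next apply Lemma~\ref{lem:weaksnowflake} with $p=1$ and $\eps=1/2$. The metric $(\omega\circ d)^{1/2}$ is exactly $\omega_0\circ d$, and snowflakes of ultrametrics are ultrametrics. So $(F,\omega_0\circ d)$ admits an $L_1$-stochastic embedding into ultrametric spaces with $L_1$-stochastic distortion at most $2\cdot 64\sigma = 128\sigma$. This bound does not depend on $F$, which finishes the proof.

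There is essentially no obstacle: all the content sits in the two preceding lemmas. The only points that need care are that $\omega^{1/2}$ is a proper gauge and that finite separation modulus is inherited by finite subsets with the same constant.
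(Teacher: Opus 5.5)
Your proposal is correct and is essentially the paper's proof: take $\omega_0=\sqrt{\omega}$, get weak-$L_1$-stochastic distortion $\le 64\sigma$ for finite subsets of $(X,\omega\circ d)$ from Lemma~\ref{lem:separationmodulusweak}, then apply Lemma~\ref{lem:weaksnowflake} with $p=1$, $\eps=1/2$ to get $L_1$-stochastic distortion $\le 128\sigma$ uniformly. Your extra checks, that $\sqrt{\omega}$ is a proper gauge and that the separation modulus passes to finite subsets, are details the paper leaves implicit.
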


\begin{proof}
Assume that there exists a proper gauge $\omega$ such that $(X,\omega\circ d)$ has separation modulus $\sigma<\infty$. Then by Lemma~\ref{lem:separationmodulusweak}, every finite subset of $(X,\omega\circ d)$ admits a weak-$L_1$-stochastic embedding into the class of ultrametric spaces with weak-$L_1$-stochastic distortion $\leq 64\sigma$. Then by Lemma~\ref{lem:weaksnowflake}, every finite subset of $(X,\sqrt{\omega}\circ d)$ admits an $L_1$-stochastic embedding into the class of ultrametric spaces with $L_1$-stochastic distortion $\leq 128\sigma$. By definition, this means that the whole space $(X,\sqrt{\omega}\circ d)$ admits an $L_1$-stochastic embedding into the class of ultrametric spaces. Since $\omega_0 := \sqrt{\omega}$ is a proper gauge, the proof is complete.
\end{proof}

\section{Non-Property A, SQI Small-Scale Expanders, and Poincar\'e Inequalities} \label{sec:nonPropA}
We will say that a graph $X$ is {\it non-Property A} if it fails to have Property A. In this section, we focus on non-Property A graphs and prove by contrapositive the implications $(5) \implies (6) \implies (7) \implies (1)$ from Theorem~\ref{thm:mainextended}.

\subsection{Non-Property A and sqi small-scale expanders}
\label{ss:nonPropA=>almostexpanders}
In this subsection we prove $(7) \implies (1)$ from Theorem~\ref{thm:mainextended} by contrapositive. That is, we show (in Theorem~\ref{thm:nonPropA=>almostexpander}) that non-Property A graphs contain a subquadratic isoperimetry small-scale expander sequence of induced subgraphs.

Let $H$ be a finite graph. Given a subset $F \sbs H$, let $\partial_H F$ denote the {\it internal vertex boundary} of $F$ with respect to $H$. That is, $x\in \partial_H F$ if and only if $x \in F$ and there exists $y \in H \setminus F$ with $x\sim y$. Note that any connected subset of $H$ having nonempty intersection with both $F$ and $H\setminus F$ must contain an element of $\partial_H F$.

Denote by $\Phi_H: \N \to [0,\infty]$ the {\it $\ell_1$-isoperimetric profile} of $H$. That is,
\begin{equation*}
    \Phi_H(v) := \min\left\{ \frac{|\partial_H A|}{|A|}:A\subseteq H,|A|\le v\right\}.
\end{equation*}
The $\ell_1$-isoperimetric profile is sometimes referred to as the conductance profile, see e.g., \cite{GoMonTe}. In graphs with degree bounded by $r$, the $\ell_1$-isoperimetric profile is always bounded by $r$. Also the value of $\Phi_{H}(v)$ can always be made small for $v > \frac{1}{2}|H|$, so it is only interesting to consider $v \leq \frac{1}{2}|H|$. Therefore, for a bounded degree sequence of finite graphs $(G_k)_k$ with $|G_k|\to\infty$ (meaning there exists $r\in\N$ such that the degree of each vertex in each graph $G_k$ is at most $r$), the strongest (qualitative) condition one can ask for is $\inf_n\inf_{v\leq\frac{1}{2}|G_k|} \Phi_{G_k}(v) \geq \eps_0 > 0$. Such a sequence is precisely a sequence of {\it expander graphs} (\cite[Definition~7.2.4]{Willett}, see also \cite{Lubotzky} for background). Any bounded degree graph containing a sequence of induced expander subgraphs $(G_k)_k$ is necessarily non-Property A (\cite[\S7.2]{Willett}), while the converse is not true, as the example of \cite{Z2homology} discussed in \S\ref{sec:intro} is a non-Property A graph that coarsely embeds into $L_1$ and thus contains no expanders (\cite[Proposition~7.2.5]{Willett}). However, in this section we are able to show that non-Property A graphs contain ``subquadratic isoperimetry small-scale expander sequences" (Theorem~\ref{thm:nonPropA=>almostexpander}), which we define below.

\begin{definition}[Subquadratic Isoperimetry Small-Scale Expanders] \label{def:almostexpander}
Let $(H_k)_k$ be a bounded degree sequence of finite graphs with $|H_k|\to\infty$. We say that $(H_k)_k$ is a {\it subquadratic isoperimetry (sqi) small-scale expander sequence} if there exist $\eps>0$ and $\alpha<1/2$ such that the $\ell_1$-isoperimetric profiles satisfy
\begin{align*}
\Phi_{H_k}(v) & \ge\eps \:\mbox{ for }\: v\le k,\\
\Phi_{H_k}(v) & > v^{-\alpha} \:\mbox{ for }\: k<v\le\tfrac{1}{2}|H_k|
\end{align*}
for all $k \in \N$.
\end{definition}

Since non-Property A graphs are non-locally hyperfinite by Theorem~\ref{thm:Elek}, it straightforward to construct induced subgraphs $(H_k)_k$ satisfying the first condition $\Phi_{H_k}(v) \ge\eps \:\mbox{ for }\: v\le k$ on their $\ell_1$-isoperimetric profile. Graphs like this are known as {\it non-uniformly locally amenable graphs}, and it was shown in \cite{Elek} that this condition is in fact equivalent to non-Property A. In order to obtain our main result of this section, Theorem~\ref{thm:nonPropA=>almostexpander}, we require some control on the profile $\Phi_{H_k}(v)$ for all values of $v$, even those above the scale $k$ of local expansion. It turns out that decay slower than $v \mapsto \sqrt{1/v}$ is the critical threshold needed to prove the implication $(6)\implies(7)$ from Theorem~\ref{thm:mainextended}, as will become apparent in the proof of Theorem~\ref{thm:almostexpander=>Poincare}, and hence this motivates our definition of sqi small-scale expander sequences. In the next several lemmas, we build up the necessary machinery in order to construct the sqi small-scale expander sequence of induced subgraphs of a non-Property A graph. We begin by recalling important quantitative notation and terminology.

Let $H$ be a finite graph, $\eps\in\R$, and $k\in\N$. Recall the definition of $(\eps,k)$-hyperfinite from Definition~\ref{def:hyperfinite}. If $H$ fails to be $(\eps,k)$-hyperfinite, we say that it is {\it non-$(\eps,k)$-hyperfinite}. It is convenient for our proofs in this subsection to allow $H = \emptyset$ and $\eps\leq 0$. Having said this, it is important to note that
\begin{itemize}
    \item if $\eps>0$ and $|H| \leq k$, then $H$ is necessarily $(\eps,k)$-hyperfinite, and
    \item if $\eps \leq 0$ and $H \neq \emptyset$, then $H$ is necessarily non-$(\eps,k)$-hyperfinite.
\end{itemize}
We will use these observations throughout the section.

An {\it $(\eps,k)$-F\o{}lner set} is a subset $F\sbs H$ with $|F|\leq k$ and $|\partial_H F| < \eps |F|$. We say that a finite graph $H$ is {\it $(\eps,k)$-amenable} if it contains an $(\eps,k)$-F\o{}lner set. If $H$ fails to be $(\eps,k)$-amenable, we say that is it {\it non-$(\eps,k)$-amenable}. The next lemma was essentially observed by Elek in \cite[Proposition 2.1]{Elek}. Since our definition of $(\eps,k)$-hyperfiniteness is slightly different than that used in \cite{Elek} (see Remark~\ref{rmk:equivalent}), we provide the proof for completeness.

\begin{lemma} \label{lem:amenable}
Let $H$ be a finite graph, $\eps\in\R$, and $k\in\N$. If $H$ is non-$(\eps,k)$-hyperfinite, then there exists an induced subgraph of $H$ that is simultaneously non-$(\eps,k)$-hyperfinite and non-$(\eps,k)$-amenable.
\end{lemma}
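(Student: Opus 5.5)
The plan is to iteratively remove Følner sets. We start from $H_0 := H$. If $H_0$ is non-$(\eps,k)$-amenable, we are done. Otherwise, pick an $(\eps,k)$-Følner set $F_0 \sbs H_0$, so $|F_0|\le k$ and $|\partial_{H_0}F_0| < \eps|F_0|$, and set $H_1 := H_0\setminus F_0$. We repeat with $H_1$, obtaining $F_1 \sbs H_1$ with $|\partial_{H_1}F_1|<\eps|F_1|$, then $H_2 := H_1 \setminus F_1$, and so on. The process runs while $H_j$ is $(\eps,k)$-amenable. Since $H$ is finite and each $F_j$ is nonempty (a Følner set must be nonempty, because $|\partial F|<\eps|F|$ fails for $F=\emptyset$), the process terminates at some $H_m$ that is non-$(\eps,k)$-amenable. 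Here $H_m=\emptyset$ is allowed, and the empty graph is vacuously non-amenable.

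It remains to show that $H_m$ is non-$(\eps,k)$-hyperfinite. We argue by contradiction: suppose $H_m$ is $(\eps,k)$-hyperfinite, witnessed by a partition $\{A_i\}$ and a set $Y_m$ with $|Y_m|<\eps|H_m|$. When $H_m=\emptyset$ this is automatic for $\eps>0$. For $\eps\le 0$ the first step already fails: no Følner set exists, so $m=0$ and $H_0=H$ is non-hyperfinite by assumption.

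We then build a witness for $H$. The partition is $\{A_i\}\cup\{F_j\}_{j<m}$, each piece of size at most $k$. The separator is $Y := Y_m \cup \bigcup_{j<m}\partial_{H_j}F_j$. Its size satisfies
\[
|Y| < \eps|H_m| + \eps\sum_j |F_j| = \eps|H|.
\]
In the edge case $H_m=\emptyset$ we have $|Y_m|=0$, but the sum of the boundary terms is still strict, since $m\ge1$.

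Now let $C$ be a connected subset of $H\setminus Y$. Take the least $j$ with $C\cap F_j\neq\emptyset$, if any such $j$ exists. Then $C\sbs H_j$. If $C$ is not contained in $F_j$, it meets both $F_j$ and $H_j\setminus F_j$. By the remark on internal boundaries, $C$ then contains a point of $\partial_{H_j}F_j\sbs Y$, a contradiction. So $C\sbs F_j$. If no such $j$ exists, then $C\sbs H_m\setminus Y_m$, and hence $C$ lies in some $A_i$. This shows $H$ is $(\eps,k)$-hyperfinite, contradicting the hypothesis.

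The main obstacle is the bookkeeping: keeping the inequality strict, and using the internal boundary relative to $H_j$ rather than $H$. The connectivity argument works only because $C\sbs H_j$ by minimality of $j$.
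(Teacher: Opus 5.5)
Your proof is correct and is essentially the paper's argument. The paper proves the contrapositive by strong induction on $|H|$: it removes a single $(\eps,k)$-F\o{}lner set $F$ and glues $F$ and $\partial_H F$ onto a hyperfinite decomposition of $H\setminus F$, and your iterative removal of F\o{}lner sets is exactly that induction unrolled.
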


\begin{proof}
We prove the contrapositive by strong induction on $|H|$. The base case $|H|=1$ is trivial. Now let $H$ be a finite graph with $|H| \geq 2$, and assume that every induced subgraph of $H$ is $(\eps,k)$-hyperfinite or $(\eps,k)$-amenable. We will prove that $H$ itself is $(\eps,k)$-hyperfinite.

If $H$ is non-$(\eps,k)$-hyperfinite, then, by assumption, $H$ is $(\eps,k)$-amenable. Hence, we can find a subset $F \sbs H$ such that $|F| \leq k$ and $|\partial_H F| < \eps |F|$. Then the induced subgraph $H \setminus F$ is $(\eps,k)$-hyperfinite by the inductive hypothesis. Therefore, we can find subsets $\{B_i\}_i$ of $H \setminus F$ and a subset $Z \sbs H \setminus F$ such that
\begin{itemize}
    \item $\{B_i\}_i$ partitions $H\setminus F$,
    \item $|B_i| \leq k$ for all $i$,
    \item $|Z| < \eps |H\setminus F|$, and
    \item for every connected subset $C$ of $(H \setminus F) \setminus Z$, there is an $i$ such that $C \sbs B_i$.
\end{itemize}

Let $Y := \partial_H F \cup Z$. Then we have
\begin{itemize}
    \item $\{F\} \cup \{B_i\}_i$ partitions $H$,
    \item $|F|,|B_i| \leq k$ for all $i$,
    \item $|Y| < \eps|F| + \eps|H\setminus F| = \eps |H|$, and
    \item for every connected subset $C$ of $H \setminus Y$, we have that $C \sbs F$ or there is an $i$ such that $C \sbs B_i$.
\end{itemize}
This shows that $H$ is $(\eps,k)$-hyperfinite.
\end{proof}

The next lemma and its proof are similar to the previous one. One difference to note is that the constant $\eps$ in non-$(\eps,k)$-hyperfiniteness can change in Lemma~\ref{lem:nonhyperfinite}. This lemma is needed in the proof of the F\o{}lner Set Removal Lemma~\ref{lem:Folnerremoval} (as is Lemma~\ref{lem:amenable}).

\begin{lemma} \label{lem:nonhyperfinite}
Let $X$ be a finite graph, $H \sbs X$, $\eps\in\R$, and $k\in\N$. Suppose that $X$ is non-$(\eps,k)$-hyperfinite while the induced subgraph $X\setminus H$ is $(\eps,k)$-hyperfinite. Then the induced subgraph $H$ is non-$(\eps-\frac{|\partial_X H|}{|H|},k)$-hyperfinite.
\end{lemma}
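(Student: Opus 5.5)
We argue by contrapositive, in the same spirit as the proof of Lemma~\ref{lem:amenable}. Set $\eps' := \eps - \frac{|\partial_X H|}{|H|}$. Suppose $H$ is $(\eps',k)$-hyperfinite. Combining this with the hyperfiniteness of $X\setminus H$, we will build a witness that $X$ is $(\eps,k)$-hyperfinite, which contradicts the hypothesis. If $H=\emptyset$, then $X\setminus H = X$ and there is nothing to prove, so we assume $H\neq\emptyset$ so that $\eps'$ is defined.

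First we take witnesses for the two pieces. For $H$, choose a partition $\{A_i\}_i$ of $H$ with $|A_i|\le k$, and a set $Y_1\sbs H$ with $|Y_1| < \eps'|H| = \eps|H| - |\partial_X H|$, such that every connected subset of $H\setminus Y_1$ lies in some $A_i$. For $X\setminus H$, choose a partition $\{B_j\}_j$ of $X\setminus H$ with $|B_j|\le k$, and a set $Y_2 \sbs X\setminus H$ with $|Y_2| < \eps|X\setminus H|$, such that every connected subset of $(X\setminus H)\setminus Y_2$ lies in some $B_j$. (If $X\setminus H=\emptyset$, take empty families and $Y_2=\emptyset$; the strict inequality is the only point to watch, and it is harmless because the bound for $Y_1$ is already strict.)

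Next we glue. Put $Y := Y_1\cup Y_2\cup \partial_X H$, and use the partition $\{A_i\}_i\cup\{B_j\}_j$ of $X$, all of whose pieces have size at most $k$. For the cardinality,
\[
|Y| < \eps|H| - |\partial_X H| + \eps|X\setminus H| + |\partial_X H| = \eps|X|.
\]
For the connectivity condition, let $C$ be a connected subset of $X\setminus Y$. As noted in the paper, any connected set meeting both $H$ and $X\setminus H$ must contain a point of $\partial_X H$. Since $C$ avoids $\partial_X H$, it lies entirely in $H\setminus Y_1$ or entirely in $(X\setminus H)\setminus Y_2$. Connectivity in the induced subgraphs agrees with connectivity in $X$, so $C$ is contained in some $A_i$ or some $B_j$. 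Hence $X$ is $(\eps,k)$-hyperfinite, which is the desired contradiction.

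The main obstacle is bookkeeping rather than ideas. We must use the internal boundary $\partial_X H \sbs H$, which is what justifies the splitting of connected sets. We must also handle the degenerate cases: $\eps'\le 0$, where $H$ cannot be $(\eps',k)$-hyperfinite unless $H = \emptyset$, so the contrapositive assumption is vacuous; and empty pieces.
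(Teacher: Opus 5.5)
Your proof is correct and is essentially the paper's argument. The paper also assumes $H$ is $(\eps-\frac{|\partial_X H|}{|H|},k)$-hyperfinite, glues that witness to the one for $X\setminus H$ using the union of the two partitions and the separator $Y_1\cup\partial_X H\cup Y_2$, and concludes that $X$ is $(\eps,k)$-hyperfinite, a contradiction. Your explicit handling of the degenerate cases and of connectivity in induced subgraphs fills in details that the paper leaves implicit.
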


\begin{proof}
Suppose towards a contradiction that the induced subgraph $H$ is $(\eps-\frac{|\partial_X H|}{|H|},k)$-hyperfinite. Let $\{B_i\}_i,Z$ be subsets of $H$ such that
\begin{itemize}
    \item $\{B_i\}_i$ partitions $H$,
    \item $|B_i| \leq k$ for all $i$,
    \item $|Z| < (\eps-\frac{|\partial_X H|}{|H|}) |H|$, and
    \item for every connected subset $C$ of $H \setminus Z$, there is an $i$ such that $C \sbs B_i$.
\end{itemize}
Let $\{B'_j\}_j,Z'$ be subsets of $X\setminus H$ such that
\begin{itemize}
    \item $\{B'_j\}_j$ partitions $X\setminus H$,
    \item $|B'_j| \leq k$ for all $j$,
    \item $|Z'| < \eps |X\setminus H|$, and
    \item for every connected subset $C$ of $(X\setminus H) \setminus Z'$, there is a $j$ such that $C \sbs B'_j$.
\end{itemize}

Set $\{A_m\}_m := \{B_i\}_i \cup \{B'_j\}_j$ and $Y:= Z \cup \partial_X H \cup Z'$. Then we have
\begin{itemize}
    \item $\{A_m\}_m$ partitions $X$,
    \item $|A_m| \leq k$ for all $m$,
    \item $|Y| < (\eps-\frac{|\partial_X H|}{|H|}) |H| + |\partial_X H| + \eps|X\setminus H| = \eps|X|$, and
    \item for every connected subset $C$ of $X \setminus Y$, there is an $m$ such that $C \sbs A_m$.
\end{itemize}
This shows that $X$ is $(\eps,k)$-hyperfinite, a contradiction.
\end{proof}

The following F\o{}lner Set Removal Lemma is our main technical tool of the subsection, and we will see that it quickly implies the existence of an sqi small-scale expander sequence of induced subgraphs in non-Property A graphs (Theorem~\ref{thm:nonPropA=>almostexpander}).

\begin{lemma}[F\o{}lner Set Removal] \label{lem:Folnerremoval}
Let $X$ be a finite graph. Let $\phi:\N\to[0,\infty)$ be a nonincreasing function such that $\sum_{n=0}^{\infty}\phi(2^n)<\infty$. If $\eps>0$, then there exists $k_0\in\N$ such that if $k\ge k_0$, then the following holds: if $X$ is non-$(\eps,k)$-hyperfinite, then there exists an induced subgraph $H_*\sbs X$ with $\ell_1$-isoperimetic profile satisfying
\begin{align*}
\Phi_{H_*}(v) & \ge\eps/2 \:\mbox{ for }\: v\le k,\\
\Phi_{H_*}(v) & > \phi(v) \:\mbox{ for }\: k<v\le\tfrac{1}{2}|H_*|.
\end{align*}
\end{lemma}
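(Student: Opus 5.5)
We prove the statement by an iterative removal procedure, with a budget controlled by the summability of $\phi(2^n)$. Choose $k_0$ with $\sum_{n\ge \lfloor\log_2 k_0\rfloor}\phi(2^n)<\eps/8$, where the sum is interpreted suitably to absorb constant factors from dyadic rounding. Fix $k\ge k_0$ and a non-$(\eps,k)$-hyperfinite $X$. Set $X_0:=X$ and $\eps_0:=\eps$.

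\emph{The iteration.} At stage $j$ we hold an induced subgraph $X_j$ that is non-$(\eps_j,k)$-hyperfinite, with $\eps_j\ge \eps/2$.
\begin{enumerate}
\item Apply Lemma~\ref{lem:amenable} to $X_j$. This gives an induced subgraph $H_j\sbs X_j$ that is both non-$(\eps_j,k)$-hyperfinite and non-$(\eps_j,k)$-amenable. The second property gives $\Phi_{H_j}(v)\ge \eps_j\ge\eps/2$ for $v\le k$.
\item If also $\Phi_{H_j}(v)>\phi(v)$ for all $k<v\le\frac12|H_j|$, output $H_*:=H_j$.
\item Otherwise pick $F\sbs H_j$ with $k<|F|\le\frac12|H_j|$ and $|\partial_{H_j}F|\le\phi(|F|)|F|$. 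Decide which side to keep. If $H_j\setminus F$ is $(\eps_j,k)$-hyperfinite, apply Lemma~\ref{lem:nonhyperfinite} with $X=H_j$ and the subset $F$: then $F$ is non-$(\eps_j-\phi(|F|),k)$-hyperfinite. Otherwise $H_j\setminus F$ is non-$(\eps_j,k)$-hyperfinite and we keep it at no cost.
\item Set $X_{j+1}$ to the kept piece and $\eps_{j+1}$ to the new constant.
\end{enumerate}
The process terminates because $|X_j|$ strictly decreases. Moreover each $X_j$ is nonempty, since a non-hyperfinite graph with $\eps_j>0$ has more than $k$ vertices.

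\emph{Losing constant only when passing to $F$.} Passing to the complement $H_j\setminus F$ costs nothing in $\eps$, but it can happen many times. Passing to $F$ costs $\phi(|F|)$ and at least halves the size, since $|F|\le\frac12|H_j|\le\frac12|X_j|$. So the sizes at the moments we pass to some $F$ form a sequence that at least halves each time and stays above $k$.

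\emph{Main obstacle: bounding the total loss.} The key step is showing that the total loss $\sum\phi(|F|)$ over those moments is at most $\eps/2$, which keeps $\eps_j\ge \eps/2$ throughout. List the sizes in increasing order, $s_1<s_2<\dots$. They satisfy $s_{i+1}\ge 2s_i$ and $s_1>k$, so $s_i\ge 2^{i-1}k$. Since $\phi$ is nonincreasing, the loss is at most $\sum_i\phi(2^{i-1}k)$. After rounding $k$ down to a power of two, this is bounded by the dyadic tail $\sum_{n\ge\log_2 k}\phi(2^n)$, which is small by the choice of $k_0$. Hence $\eps_j\ge\eps-\eps/2\ge\eps/2$ at every stage, and the terminal $H_*$ satisfies both profile conditions. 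I expect this accounting to be the delicate part, specifically making sure the complement steps never degrade $\eps_j$ and that the dyadic sum is indexed correctly.
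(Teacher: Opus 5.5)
Your proof is correct and is essentially the paper's argument written as an explicit descent. The paper instead picks a minimal-cardinality induced subgraph $H_*$ among those $H$ with $|H|>k$ that are non-$(\eps-\delta(|H|),k)$-hyperfinite, where $\delta(v)=\sum_{n\ge0}\phi(2^nv)$, and uses the same two lemmas and the same dichotomy (either the complement is non-hyperfinite, or one passes to the F\o{}lner set via Lemma~\ref{lem:nonhyperfinite}); the inequality $\delta(|H_*|)+\phi(|H|)\le\delta(2|H|)+\phi(|H|)=\delta(|H|)$ plays the role of your halving-chain budget, and your iteration in fact maintains the invariant $\eps_j\ge\eps-\delta(|X_j|)$. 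Two small points: extracting $F$ with $|F|>k$ from the failure of $\Phi_{H_j}(v)>\phi(v)$ uses $\phi(v)\le\phi(2^{\lfloor\log_2 k\rfloor})<\eps/8<\eps_j$ for $v>k$, together with the first profile bound and monotonicity of $\phi$ (the paper also leaves this implicit); and no constant factors arise in the dyadic rounding, since $2^{\lfloor\log_2 k\rfloor}\le k$ gives $\sum_i\phi(2^{i-1}k)\le\sum_{n\ge\lfloor\log_2 k_0\rfloor}\phi(2^n)$ directly.
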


\begin{proof}
Assume that $\eps>0$. Define $\delta: \N \to [0,\infty)$ by
\begin{equation} \label{eq:delta}
    \delta(v) := \sum_{n=0}^\infty \phi(2^nv).
\end{equation}
Note that $\delta$ is finite-valued since $\phi$ is nonincreasing and $\sum_{n=0}^{\infty}\phi(2^n)<\infty$, and also $\delta$ is nonincreasing since $\phi$ is nonincreasing. Choose $k_0 \in \N$ such that $\delta(k_0) < \eps/2$. Such a choice of $k_0$ is possible because $\sum_{n=0}^{\infty}\phi(2^n)<\infty$. Assume that $k \geq k_0$, so that $\delta(k) \leq \delta(k_0) < \eps/2$.

Assume that $X$ is non-$(\eps,k)$-hyperfinite. Consider the collection $\H$ of all induced subgraphs $H \sbs X$ such that $|H| > k$ and $H$ is non-$(\eps-\delta(|H|),k)$-hyperfinite. Note that for any $H \in \H$, it holds that $\delta(|H|) \leq \delta(k) < \eps/2$. Thus,
\begin{equation} \label{eq:nonamenable}
    \eps - \delta(|H|) > \eps/2 > 0 \hspace{.2in} \forall H\in\H.
\end{equation}
Note that $\H \neq \emptyset$ since $X \in \H$ (here we use the facts that (1) $G$ being non-$(\eta,k)$-hyperfinite implies that $G$ is non-$(\eta',k)$-hyperfinite for all $\eta' \leq \eta$ and (2) $\eta>0$ and $G$ being non-$(\eta,k)$-hyperfinite implies that $|G| > k$, facts that we will continue to use throughout the proof). Let $H_* \in \H$ with
\begin{equation*}
    |H_*| = \min_{H\in\H} |H|.
\end{equation*}
We claim that the $\ell_1$-isoperimetric profile of $H_*$ satisfies the desired inequalities.

First, we observe that the induced subgraph $H_*$ must be non-$(\eps-\delta(|H_*|),k)$-amenable. Indeed, otherwise, Lemma~\ref{lem:amenable} would imply that there is a proper induced subgraph $H \subsetneq H_*$ which is non-$(\eps-\delta(|H_*|),k)$-hyperfinite. As observed in \eqref{eq:nonamenable}, $\eps-\delta(|H_*|) > 0$, and so the previous sentence implies $|H| > k$. Furthermore, since $\eps-\delta(|H_*|) \geq \eps-\delta(|H|)$, we have that $H$ is non-$(\eps-\delta(|H|),k)$-hyperfinite. Thus, $H\in\H$, contradicting the minimality of $|H_*|$. This proves that $H_*$ is non-$(\eps-\delta(|H_*|),k)$-amenable, and hence by \eqref{eq:nonamenable}, it is non-$(\eps/2,k)$-amenable, which in turn is easily seen to be equivalent to
\begin{align*}
    \Phi_{H_*}(v) & \ge\eps/2 \:\mbox{ for }\: v\le k.
\end{align*}
This verifies the first desired inequality for $\Phi_{H_*}$.

For the second desired inequality for $\Phi_{H_*}$, assume towards a contradiction that it fails. Then we can find $H \sbs H_*$ such that
\begin{align} \label{eq:Folner}
    k < |H| \leq \tfrac{1}{2}|H_*|, && \frac{|\partial_{H_*} H|}{|H|} \leq \phi(|H|).
\end{align}
We will show that $H\in\H$, which again will contradict the minimality of $|H_*|$ and complete the proof. We already have by \eqref{eq:Folner} that $|H|>k$, and so it remains to show that $H$ is non-$(\eps-\delta(|H|),k)$-hyperfinite.

If the induced subgraph $H_* \setminus H$ is non-$(\eps-\delta(|H_*|),k)$-hyperfinite, then since $\eps - \delta(|H_*|) > 0$ (by \eqref{eq:nonamenable}), it must happen that $|H_*\setminus H| > k$, and hence $H\setminus H_* \in \H$, contradicting minimality of $|H_*|$. Therefore, $H_* \setminus H$ must be $(\eps-\delta(|H_*|),k)$-hyperfinite. Then we can apply Lemma~\ref{lem:nonhyperfinite} to conclude that $H$ is non-$(\eps-\delta(|H_*|)-\frac{|\partial_{H_*}H|}{|H|},k)$-hyperfinite. Using \eqref{eq:Folner}, the definition of $\delta$, and monotonicity of $\phi$, it is clear that
$$\delta(|H_*|) + \frac{|\partial_{H_*}H|}{|H|} \leq \delta(2|H|) + \phi(|H|) \overset{\eqref{eq:delta}}{=} \delta(|H|),$$
and hence the previous sentence implies that $H$ is non-$(\eps-\delta(|H|),k)$-hyperfinite.
\end{proof}

We now arrive to the main result of this subsection. It implies the contrapositive of $(7)\implies(1)$ in Theorem~\ref{thm:mainextended}.

\begin{theorem}[SQI Small-Scale Expander Sequence in non-Property A Graph] \label{thm:nonPropA=>almostexpander}
Let $X$ be a bounded degree non-Property A graph. Then $X$ contain an sqi small-scale expander sequence of induced subgraphs.
\end{theorem}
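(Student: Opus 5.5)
The plan is to combine Elek's characterization (Theorem~\ref{thm:Elek}) with the F\o{}lner Set Removal Lemma~\ref{lem:Folnerremoval}, using $\phi(v) := v^{-\alpha}$ for a fixed $\alpha \in (0,1/2)$ (say $\alpha = 1/4$). This $\phi$ is nonincreasing, and $\sum_{n\geq 0}\phi(2^n) = \sum_n 2^{-\alpha n} < \infty$, so the lemma applies.

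First I unpack non-Property A. Since $X$ fails Property A, Theorem~\ref{thm:Elek} says $X$ is not locally hyperfinite. Hence there is $\eps_0>0$ such that for every $k\in\N$ some finite induced subgraph $X_k\sbs X$ is non-$(\eps_0,k)$-hyperfinite. Let $k_0$ be the threshold given by Lemma~\ref{lem:Folnerremoval} for $\eps_0$ and $\phi$.

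For each $k \geq k_0$, I apply the lemma to the finite graph $X_k$. This produces an induced subgraph $H_k := H_* \sbs X_k$ with
\[
\Phi_{H_k}(v)\geq \eps_0/2 \text{ for } v\leq k, \qquad \Phi_{H_k}(v)> v^{-\alpha} \text{ for } k<v\leq \tfrac12|H_k|.
\]
Induced subgraphs of induced subgraphs of $X$ are induced subgraphs of $X$, and their degrees are bounded by the degree bound of $X$. For $k<k_0$ I simply reindex, setting $H_k := H_{k+k_0}$. The first inequality for index $k$ then follows from the inequality at scale $k+k_0 \geq k$, and the second inequality also holds for $v>k$: on $(k, k+k_0]$ it follows from $\eps_0/2 \geq v^{-\alpha}$ once $k_0$ is taken large enough. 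Alternatively, I can just start the sequence at $k_0$.

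It remains to check $|H_k|\to\infty$. The proof of Lemma~\ref{lem:Folnerremoval} shows $|H_*|>k$, since $H_*\in\H$ forces $|H_*| > k$. Alternatively, $H_k$ is non-$(\eps_0/2,k)$-hyperfinite and $\eps_0/2>0$, which also gives $|H_k|>k$.

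With $\eps := \eps_0/2$ and $\alpha<1/2$, the sequence $(H_k)_k$ satisfies Definition~\ref{def:almostexpander}. The only mild obstacle is bookkeeping: the index shift for small $k$, and confirming $|H_*|>k$ from the lemma's construction rather than from its statement alone.
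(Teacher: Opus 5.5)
Your proposal is correct and is essentially the paper's proof: Elek's theorem gives $\eps_0>0$ and non-$(\eps_0,k)$-hyperfinite finite induced subgraphs, and the F\o{}lner Set Removal Lemma~\ref{lem:Folnerremoval} with $\phi(v)=v^{-\alpha}$ yields the sequence indexed from $k_0$. Your check that $|H_k|>k$, hence $|H_k|\to\infty$, is a detail the paper leaves implicit.

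Discard the index-shift option for $k<k_0$. On $(k,k+k_0]$ you only know $\Phi_{H_k}(v)\geq\eps_0/2$. Since $\eps_0\leq 1$, the bound $\eps_0/2\geq v^{-\alpha}$ needs $v\geq 2^{1/\alpha}>4$, so it fails for $v$ just above small $k$ no matter how large $k_0$ is. Simply start the sequence at $k_0$, as the paper does.
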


\begin{proof}
We will show that the exponent $\alpha$ in the definition of sqi small-scale expander can be taken arbitrarily close to 0. Let $\alpha\in(0,\frac12)$. Since $X$ is non-Property A, by Theorem~\ref{thm:Elek}, there exists $\eps>0$ such that for every $k\in\N$, there exists a finite induced subgraph $H_k \sbs X$ that is non-$(\eps,k)$-hyperfinite. Then, by the F\o{}lner Set Removal Lemma~\ref{lem:Folnerremoval}, there exists $k_0\in\N$ such that for all $k\geq k_0$, there exists an induced subgraph $H'_k \sbs H_k$ with $\ell_1$-isoperimetic profile satisfying
\begin{align*}
\Phi_{H'_k}(v) & \ge\eps/2 \:\mbox{ for }\: v\le k,\\
\Phi_{H'_k}(v) & > v^{-\alpha} \:\mbox{ for }\: k<v\le\tfrac{1}{2}|H'_k|.
\end{align*}
Thus, $(H'_k)_{k=k_0}^\infty$ is an sqi small-scale expander sequence of induced subgraphs of $X$.
\end{proof}

\subsection{SQI small-scale expanders and Poincar\'e inequalities}
Let $H$ be a finite graph. When $f: H \to \R$ is a function and $\mu$ is a signed measure on $H$, we denote by $\langle f,\mu \rangle$ the integral $\int_H f d\mu$. Given a nonempty subset $P\subseteq H$, denote by $u_{P}$ the uniform probability measure on $P$. Take a random nonempty subset $\Pi \sbs H$ with law $\P$ on $\{0,1\}^{H} \setminus \{\emptyset\}$. We consider a Poincar\'e inequality of the form 
\begin{equation*}
\E_\Pi\left[\left|\left\langle f,u_{\Pi}-u_{H}\right\rangle \right|\right]\le\frac{C}{|H|}\sum_{x\sim y\in H}\left|f(x)-f(y)\right|,
\end{equation*}
where the sum is over all edges $\{x,y\}\sbs H$. 

\begin{definition}\label{def:Poincare}
For a random nonempty subset $\Pi \sbs H$, define the {\it Poincar\'e constant} of $\Pi$ as
$$C(\Pi,H):=\sup_{f\neq{\rm const.}}\frac{\E_\Pi\left[\left|\left\langle f,u_{\Pi}-u_{H}\right\rangle \right|\right]}{\left\Vert f\right\Vert _{W^{1,1}}},$$
where $\sup$ is taken over non-constant functions $f:H\to\R$, and 
$$\left\Vert f\right\Vert_{W^{1,1}}=\frac{1}{\left|H\right|}\sum_{x\sim y\in H}\left|f(x)-f(y)\right|$$
is the Sobolev seminorm of $f$.
\end{definition}

Next we estimate the Poincar\'e constants of Bernoulli nonempty point processes.

\begin{definition}[Bernoulli Nonempty Point Process] \label{def:Bernoullinonempty}
For $H$ a finite nonempty graph and $p\in(0,1)$, denote by $\P_p$ the product of Bernoulli$(p)$ measures on $\{0,1\}^{H}$, and by $\widetilde{\P}_{p}$ the conditional distribution
\begin{equation*}
    \widetilde{\P}_p(\Pi) :=
    \begin{cases}
    0 & \Pi = \emptyset \\
    \dfrac{\P_p(\Pi)}{1-\P_p(\emptyset)} & \Pi \in \{0,1\}^H\setminus\{\emptyset\}
    \end{cases} = \begin{cases}
    0 & \Pi = \emptyset \\
    \dfrac{p^{|\Pi|}(1-p)^{|H\setminus\Pi|}}{1-(1-p)^{|H|}} & \Pi \in \{0,1\}^H\setminus\{\emptyset\}
    \end{cases}.
\end{equation*}
Denote by $\widetilde{\Pi}(p)$ the random nonempty subset with law $\widetilde{\P}_p$. We call $\widetilde{\Pi}(p)$ the {\it Bernoulli$(p)$ nonempty point process} of $H$.
\end{definition}

\begin{example}[Cheeger Constant]
Let $\widetilde{\Pi}(0^+)$ denote $\lim_{p\to 0^+} \widetilde{\Pi}(p)$. Clearly, we have that $\P(\widetilde{\Pi}(0^+) = \{v\}) = 1/|H|$ for each $v\in H$. In other words, $\widetilde{\Pi}(0^+)$ is a uniformly random singleton subset of $H$. Its Poincar\'e constant $C(\widetilde{\Pi}(0^+),H)$ is
\begin{equation*}
    C(\widetilde{\Pi}(0^+),H) = \sup_{f\neq{\rm const.}}\frac{\E_{\widetilde{\Pi}(0^+)}\left[\left|\left\langle f,u_{\widetilde{\Pi}(0^+)}-u_{H}\right\rangle \right|\right]}{\left\Vert f\right\Vert _{W^{1,1}}} = \sup_{f\neq{\rm const.}}\frac{\sum_{v\in H}|f(v)-\langle f,u_H\rangle|}{\sum_{x\sim y\in H} |f(x)-f(y)|},
\end{equation*}
which is (comparable up to multiplicative constants by) the {\it Cheeger constant} of the graph $H$ \cite[Definition~1.2]{Lubotzky} (this can be seen by plugging in $f = \mathsf{1}_A$ into the ratio above for $A \sbs H$).
\end{example}

The above example shows that a bounded degree sequence of finite graphs $(H_n)_n$ with $|H_n|\to\infty$ is an expander sequence if and only if $\sup_n C(\widetilde{\Pi}(0^+),H_n) \leq C <\infty$. We will show in the main result of this subsection, Theorem~\ref{thm:almostexpander=>Poincare}, that an sqi small-scale expander sequence $(H_n)_n$ satisfies the similar but weaker condition $\sup_n C(\widetilde{\Pi}(p_n),H_n) \leq C <\infty$ for some $p_n\to 0$. Before proving this theorem, we need additional notation and a lemma which essentially yields the theorem when the nonconstant function $f: H \to \R$ in Definition~\ref{def:Poincare} is an indicator function $\mathsf{1}_A$. It is standard that it suffices to consider $f$ of this form (see, e.g., \cite[Theorem~2.3]{GO}).

Denote by $\Pi(p)$ the random nonempty subset with law $\P_p$. For $v\in H$, Let $\Pi_v(p)$ denote the random variable equal to 1 if $v\in\Pi(p)$ and equal to 0 if $v\not\in\Pi(p)$. Hence, $\{\Pi_v(p)\}_{v\in H}$ are iid Bernoulli$(p)$ random variables.

\begin{lemma}\label{lem:moments}
Let $H$ be a finite graph. Assume that $|H| \geq e^4$, and let $p \in [\frac{8\log |H|}{|H|},1]$. Then for all $A\sbs H$, we have
$$\E\left[\left|\left\langle {\bf 1}_{A},u_{\widetilde{\Pi}(p)}-u_{H}\right\rangle \right|\right]\le 9\min\left\{ \frac{p^{-1/2}|A|^{1/2}}{\left|H\right|},\frac{|A|}{|H|}\right\}.$$
\end{lemma}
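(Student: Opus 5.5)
Write $n := |H|$, $a := |A|$, $N := |\widetilde{\Pi}(p)|$ and $S := |A \cap \widetilde{\Pi}(p)|$. Then
$$\langle \mathbf{1}_A, u_{\widetilde{\Pi}(p)} - u_H\rangle = \frac{S}{N} - \frac{a}{n}.$$
The plan is to bound the expectation of the absolute value of this quantity in two ways, one for each term of the minimum.

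\emph{The trivial bound $2a/n$.} Since $\widetilde{\Pi}(p)$ is the conditioning of $\Pi(p)$ on the event $\{\Pi(p)\neq\emptyset\}$, which is invariant under permutations of the vertices, its law is exchangeable. Hence $\E[S/N] = a/n$. The triangle inequality then gives $\E|S/N - a/n| \le 2a/n$, which is at most $9a/n$.

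\emph{The bound $p^{-1/2}a^{1/2}/n$.} We work with the unconditioned Bernoulli process and split $\{0,1\}^H$ into a good event $G := \{N \ge pn/2\}$ and its complement.

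On $G$ we decompose
$$\frac{S}{N} - \frac{a}{n} = \frac{S - pa}{N} + pa\Big(\frac1N - \frac1{pn}\Big),$$
so that
$$\Big|\frac{S}{N}-\frac{a}{n}\Big| \le \frac{2|S-pa|}{pn} + \frac{2a\,|N-pn|}{pn^2}.$$
By Cauchy--Schwarz, $\E|S-pa| \le \sqrt{pa}$, which makes the first term at most $2p^{-1/2}a^{1/2}/n$. Similarly $\E|N-pn|\le\sqrt{pn}$, which makes the second term at most $2a/(n^{3/2}p^{1/2})$. Since $a\le n$, this is at most $2a^{1/2}/(p^{1/2}n)$. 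Conditioning on the event $\{\Pi(p)\neq\emptyset\}$ multiplies these expectations by at most $1/(1-(1-p)^n) \le 1/(1-n^{-8})$, which is close to $1$. So the contribution of $G$ is at most roughly $4.1\,p^{-1/2}a^{1/2}/n$.

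On the complement of $G$ the integrand is at most $1$. A Chernoff bound gives $\P(N < pn/2) \le e^{-pn/8} \le 1/n$, using $p \ge 8\log n/n$. What remains is to absorb $1/n$ into $p^{-1/2}a^{1/2}/n$. This needs $a^{1/2} \ge p^{1/2}$, which holds because $a \ge 1$ whenever $A\ne\emptyset$; the case $A=\emptyset$ is trivial. The hypothesis $n\ge e^4$ ensures that the Chernoff and conditioning constants stay small.

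\emph{Main obstacle.} The hard part is the bad event when $a$ is small. There the bound $1/n$ must be compared against $\min\{p^{-1/2}a^{1/2}, a\}/n$. When the minimum is the trivial term it is handled by the exchangeability bound, which needs no bad-event estimate; otherwise the minimum is at least $p^{-1/2}a^{1/2}/n \ge 1/n$. Collecting constants, the total should be at most $9$ times the minimum.
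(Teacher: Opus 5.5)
Your proof is correct, but both halves take a somewhat different route from the paper's.

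For the bound $a/n$, the paper first passes to the unconditioned process, at the cost of a factor $9/8$. It then splits on the Chernoff event $\{|\Pi(p)|\ge \tfrac12 pn\}$ to get $\E[\chi_{\{\Pi(p)\neq\emptyset\}}S/N]\le 1/n+2a/n$. Your exchangeability observation gives $\E[S/N]=a/n$ exactly under the conditioned law. This yields the constant $2$ with no probabilistic estimate and no restriction on $p$.

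For the bound $p^{-1/2}a^{1/2}/n$, the paper uses a single global Cauchy--Schwarz. It pairs $\E[\chi_{\{\Pi(p)\neq\emptyset\}}N^{-2}]^{1/2}\le 8/(pn)$ with the self-centered quantity $S-\frac{a}{n}N$; the Chernoff tail in the first factor is absorbed via $e^{-x}\le 1/(ex)$. The paper writes $S-\frac{a}{n}N=(1-\frac an)S-\frac an(N-S)$, a difference of two independent variables with equal means. Its second moment is therefore an exact sum of binomial variances, and the bound even carries an extra factor $(1-a/n)^{1/2}$.

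You instead center $S$ and $N$ separately at $pa$ and $pn$ and work only on the good event. You pay with an extra fluctuation term, absorbed via $a\le n$, and a separate bad-event contribution $1/n$, absorbed via $a\ge 1\ge p$. In exchange you need only $\E|S-pa|\le\sqrt{pa}$ and $\E|N-pn|\le\sqrt{pn}$, never the inverse second moment of $N$. Your constants (about $2$ and $5$) still sit well below $9$.

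The case discussion in your final paragraph is superfluous. Both bounds hold for every nonempty $A$, so the minimum follows at once.
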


\begin{proof}
Let $A \sbs H$. Obviously, we may assume that $A \neq \emptyset$. It is easy to see that the hypotheses $p \geq \frac{8\log|H|}{|H|} > \frac{32}{|H|}$ implies that $\widetilde{\P}_p(\Pi) \leq (1-e^{-32})^{-1}\P_p(\Pi) \leq \frac{9}{8}\P_p(\Pi)$ for all $\Pi\sbs \{0,1\}^V$. Hence, it suffices to prove
\begin{equation} \label{eq:bound}
    \E\left[\chi_{\{\Pi(p)\neq\emptyset\}}\left|\left\langle {\bf 1}_{A},u_{\Pi(p)}-u_H\right\rangle \right|\right] \leq 8\min\left\{ \frac{p^{-1/2}|A|^{1/2}}{\left|
    H\right|},\frac{|A|}{|H|}\right\}.
\end{equation}

For future use, let us note that a Chernoff bound yields
\begin{equation} \label{eq:Chernoff}
    \P(|\Pi(p)| < \tfrac{1}{2}p|H|) = \P\left(\sum_{v\in H}\Pi_v(p) < \tfrac{1}{2}p|H|\right) \leq e^{-\frac{1}{8}p|H|} \leq \frac{1}{|H|},
\end{equation}
where the last inequality follows from the hypothesis that $p \geq \frac{8\log|H|}{|H|}$. In particular, this implies
\begin{equation} \label{eq:Chernoffapp}
    \E\left[\frac{\chi_{\{\Pi(p)\neq\emptyset\}}}{|\Pi(p)|^2}\right]^{1/2} \leq \sqrt{e^{-\frac{1}{8}p|H|}} + \frac{2}{p|H|} \leq \frac{16e^{-1}+2}{p|H|} < \frac{8}{p|H|}.
\end{equation}

Returning to \eqref{eq:bound}, observe that the left-hand side is equal to
\begin{equation*}
    (\text{LHS}) = \E\left[\chi_{\{\Pi(p)\neq\emptyset\}}\left|\frac{1}{|\Pi(p)|}\sum_{v\in A}\Pi_v(p) - \frac{|A|}{|H|}\right|\right].
\end{equation*}

On the one hand, we can bound this by
\begin{align*}
    (\text{LHS}) &\leq \E\left[\frac{\chi_{\{\Pi(p)\neq\emptyset\}}}{|\Pi(p)|}\sum_{v\in A}\Pi_v(p)\right] + \frac{|A|}{|H|} \\
    &\overset{\eqref{eq:Chernoff}}{\leq} \frac{1}{|H|} + \frac{2}{p|H|}\E\left[\sum_{v\in A}\Pi_v(p)\right] + \frac{|A|}{|H|} \\
    &= \frac{1}{|H|} + \frac{2|A|}{|H|} + \frac{|A|}{|H|} \leq 4\frac{|A|}{|H|}.
\end{align*}
This establishes one of the desired inequalities in \eqref{eq:bound}.

On the other hand, we can also use the bound 
\begin{align*}
    (\text{LHS}) &= \E\left[\left|\frac{\chi_{\{\Pi(p)\neq\emptyset\}}}{|\Pi(p)|}\left(\sum_{v\in A}\Pi_v(p) - \frac{|A|}{|H|}|\Pi(p)|\right)\right|\right] \\
    &\leq \E\left[\frac{\chi_{\{\Pi(p)\neq\emptyset\}}}{|\Pi(p)|^2}\right]^{1/2} \E\left[\left(\sum_{v\in A}\Pi_v(p) - \frac{|A|}{|H|}|\Pi(p)|\right)^2\right]^{1/2} \\
    &\overset{\eqref{eq:Chernoffapp}}{\leq} \frac{8}{p|H|} \E\left[\left(\sum_{v\in A}\Pi_v(p) - \frac{|A|}{|H|}|\Pi(p)|\right)^2\right]^{1/2} \\
    &= \frac{8}{p|H|} \E\left[\left(\left(1-\frac{|A|}{|H|}\right)\sum_{v\in A}\Pi_v(p) - \frac{|A|}{|H|}\sum_{v\in H\setminus A}\Pi_v(p)\right)^2\right]^{1/2}.
\end{align*}
The term inside the square root is of the form $\E[(X-Y)^2]$ where $X$ and $Y$ are independent variables with $\E[X] = \E[Y]$. Hence, this term is equal to $\Var(X) + \Var(Y)$. Therefore we have
\begin{align*}
    \frac{1}{8}(\text{LHS}) &\leq \frac{1}{p|H|}\E\left[\left(\left(1-\frac{|A|}{|H|}\right)\sum_{v\in A}\Pi_v(p) - \frac{|A|}{|H|}\sum_{v\in H\setminus A}\Pi_v(p)\right)^2\right]^{1/2} \\
    &= \frac{1}{p|H|}\left( \left(1-\frac{|A|}{|H|}\right)^2 \Var\left(\sum_{v\in A}\Pi_v(p)\right) + \left(\frac{|A|}{|H|}\right)^2 \Var\left(\sum_{v\in H\setminus A}\Pi_v(p)\right) \right)^{1/2} \\
    &= \frac{1}{p|H|}\left( \left(1-\frac{|A|}{|H|}\right)^2 |A|p(1-p) + \left(\frac{|A|}{|H|}\right)^2 |H\setminus A|p(1-p) \right)^{1/2} \\
    &\leq \frac{p^{-1/2}}{|H|}\left( \left(1-\frac{|A|}{|H|}\right)^2 |A| + \left(\frac{|A|}{|H|}\right)^2 |H\setminus A| \right)^{1/2} \\
    &= \frac{p^{-1/2}|A|^{1/2}}{|H|}\left(1-\frac{|A|}{|H|}\right)^{1/2} \leq \frac{p^{-1/2}|A|^{1/2}}{|H|}.
\end{align*}
This establishes the other desired inequality in \eqref{eq:bound}.
\end{proof}

We now bound the Poincar\'e constant $C(\widetilde{\Pi}(p_k),H_k)$ for an sqi small-scale expander sequence $(H_k)_k$. This will follow quickly from Lemma~\ref{lem:moments} and the previously mentioned fact that in the definition of $C(\Pi,H)$, one can take the nonconstant function $f: H \to \R$ to be an indicator. Theorem~\ref{thm:almostexpander=>Poincare} yields the contrapositive of $(6)\implies(7)$ in Theorem~\ref{thm:mainextended}.

\begin{theorem} \label{thm:almostexpander=>Poincare}
Let $(H_k)_k$ be an sqi small-scale expander sequence of graphs. Then there exist a constant $C<\infty$ and a sequence of probabilities $(p_k)_k$ with $p_k\to 0$ such that the Poincar\'e constants satisfy $\sup_k C(\widetilde{\Pi}(p_k),H_k) \leq C$.
\end{theorem}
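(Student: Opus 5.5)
The plan is to reduce to indicator functions and then apply Lemma~\ref{lem:moments} in two regimes, matching the two conditions in Definition~\ref{def:almostexpander}. Fix $\eps>0$ and $\alpha<1/2$ witnessing that $(H_k)_k$ is an sqi small-scale expander sequence. I will take
$$p_k := \max\Bigl\{k^{2\alpha-1},\ \tfrac{8\log|H_k|}{|H_k|}\Bigr\}.$$
Since $2\alpha-1<0$ and $|H_k|\to\infty$, we get $p_k\to0$. For all large $k$ we also have $|H_k|\ge e^4$ and $p_k\le 1$, so Lemma~\ref{lem:moments} applies. The finitely many remaining indices can be discarded, or handled individually: each $H_k$ is connected, since otherwise a union of components of size at most $\frac12|H_k|$ would have empty boundary, contradicting the profile bounds. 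Hence each has a finite Poincar\'e constant.

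\emph{Reduction to indicators.} For nonconstant $f$, write $f=\min f+\int_{\min f}^{\max f}\mathbf 1_{\{f>t\}}\,dt$. The constant term is annihilated by $u_\Pi-u_H$, so the triangle inequality gives
$$\E|\langle f,u_\Pi-u_H\rangle|\le\int\E|\langle \mathbf 1_{A_t},u_\Pi-u_H\rangle|\,dt,\qquad A_t=\{f>t\}.$$
By the coarea formula,
$$\|f\|_{W^{1,1}}=\frac1{|H|}\int |E(A_t,H\setminus A_t)|\,dt,$$
where $E(\cdot,\cdot)$ denotes the set of edges between the two sets. Also $|\partial_H A|\le|E(A,H\setminus A)|$. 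So it suffices to show
$$\E|\langle \mathbf 1_A,u_{\widetilde\Pi(p_k)}-u_{H_k}\rangle|\le C\,\frac{|\partial_{H_k}A|}{|H_k|}$$
for all $\emptyset\ne A\subsetneq H_k$. Since $\langle\mathbf 1_A,\mu\rangle=-\langle\mathbf 1_{H\setminus A},\mu\rangle$ for zero-mass $\mu$, and the edge boundaries of $A$ and $H\setminus A$ coincide, I may further assume $|A|\le\frac12|H_k|$. I will work with edge boundary throughout, bounding it below by the vertex boundary when invoking the profile.

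\emph{Small sets, $|A|\le k$.} Lemma~\ref{lem:moments} bounds the left side by $9|A|/|H_k|$. The first profile condition gives $|A|\le\eps^{-1}|\partial_{H_k}A|$, so this case yields the constant $9/\eps$.

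\emph{Large sets, $k<|A|\le\frac12|H_k|$.} Lemma~\ref{lem:moments} bounds the left side by $9p_k^{-1/2}|A|^{1/2}/|H_k|$. The second profile condition gives $|\partial A|>|A|^{1-\alpha}$, so it suffices that $p_k^{-1/2}|A|^{1/2}\le |A|^{1-\alpha}$, that is, $p_k\ge |A|^{2\alpha-1}$. This holds because $|A|>k$ and $2\alpha-1<0$ give $|A|^{2\alpha-1}<k^{2\alpha-1}\le p_k$. So this case yields the constant $9$, and overall $C=9\max\{1,\eps^{-1}\}$.

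The only delicate point is making sure the two requirements on $p_k$ coexist: we need $p_k\ge 8\log|H_k|/|H_k|$ for the lemma, and $p_k\ge k^{2\alpha-1}$ for large sets. Taking the maximum handles this, and the only thing left to check is $p_k\to0$, which was noted above. The rest is the standard coarea reduction, which the paper already cites as \cite[Theorem~2.3]{GO}.
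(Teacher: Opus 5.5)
Your proposal is correct and follows the paper's proof essentially verbatim. It uses the same choice $p_k=\max\{k^{2\alpha-1},\,8\log|H_k|/|H_k|\}$, the same constant $C=9\max\{1,1/\eps\}$, and the same split into the case $|A|\le k$ (first profile condition with the $|A|/|H_k|$ bound from the moment lemma) and the case $k<|A|\le\frac12|H_k|$ (second profile condition with the $p_k^{-1/2}|A|^{1/2}/|H_k|$ bound). The paper only cites the reduction to indicators of sets with $|A|\le\frac12|H_k|$, whereas you spell out the coarea argument and run the complement symmetry through the edge boundary. That is the right bookkeeping: the paper's display writes the edge sum as equal to $|\partial_{H_k}A|$, but only the inequality $\ge$ holds, and that inequality is all the argument needs.
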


\begin{proof}
Let $\eps>0$ and $\alpha\in(0,\frac{1}{2})$ such that the $\ell_1$-isoperimetric profiles satisfy
\begin{align*}
\Phi_{H_k}(v) & \ge\eps \:\mbox{ for }\: v\le k,\\
\Phi_{H_k}(v) & > v^{-\alpha} \:\mbox{ for }\: k<v\le\tfrac{1}{2}|H_k|.
\end{align*}
Since $|H_k|\to\infty$, we may disregard an initial segment and assume that $|H_k| \geq e^4$ for all $k$. We will show that $C := 9\max\{1,1/\eps\}$ and $p_k := \max\{k^{2\alpha-1},\frac{8\log |H_k|}{|H_k|}\}$ works. By \cite[Theorem~2.3]{GO}, it suffices to prove that
\begin{equation} \label{eq:Poincare}
\E\left[\left|\left\langle \mathsf{1}_A,u_{\widetilde{\Pi}(p_k)}-u_{H_k}\right\rangle \right|\right] \le \frac{C}{|H_k|}\sum_{x\sim y\in H_k}\left|\mathsf{1}_A(x)-\mathsf{1}_A(y)\right| = C\frac{|\partial_{H_k}A|}{|H_k|}
\end{equation}
for every $k$ and every $A \sbs H_k$ with $|A| \leq \frac{1}{2}|H_k|$.

Let $A \sbs H_k$ with $|A| \leq \frac{1}{2}|H_k|$. First assume that $|A|\le k$. Then by Lemma~\ref{lem:moments} and $\Phi_{H_k}(v)\ge\eps$ for $v\le k$, we have 
$$\E\left[\left|\left\langle {\bf 1}_{A},u_{\widetilde{\Pi}(p_k)}-u_{H_k}\right\rangle \right|\right]\le 9\frac{|A|}{|H_k|}\le\frac{9}{\eps}\frac{|\partial_{H_k}A|}{|H_k|}.$$
This establishes inequality \eqref{eq:Poincare} in this case.

The other case is $|A|>k$. since $p_k\ge k^{2\alpha-1}$ and $\alpha < \frac12$, we have that $p_k^{-1/2}|A|^{1/2}<|A|^{1-\alpha}$, and thus Lemma~\ref{lem:moments} and $\Phi_{H_k}(v)>v^{-\alpha}$ for $k < v \leq \frac{1}{2}|H_k|$ yields
\begin{align*}
\E\left[\left|\left\langle  \mathsf{1}_{A},u_{\widetilde{\Pi}(p_k)}-u_{H_k}\right\rangle \right|\right] & \le \frac{9p_k^{-1/2}|A|^{1/2}}{|H_k|}\le \frac{9|A|^{1-\alpha}}{|H_k|} \le \frac{9|\partial_{H_k}A|}{|H_k|}.
\end{align*}
This establishes \eqref{eq:Poincare} and completes the proof.
\end{proof}

\subsection{Poincar\'e inequalities and $L_1$-distortion}
In this subsection, we will see how an upper bound on the Poincar\'e constant $C(\Pi,H)$ and a lower bound $\E[\|u_{\Pi}-u_H\|_{\LF}]$ results in a lower bound on the $L_1$-distortion of $\LF(H)$ and $W_1(H)$. Poincar\'e inequalities have found great utility in the past to prove $L_1$-distortion lower bounds of various classes of discrete metric spaces (see, for example, \cite[Chapter~4]{Ostrovskiibook} and \cite{NY}). Their application to lower bound the $L_1$-distortion of Lipschitz free spaces only recently appeared in \cite{GO}.

We begin by lower bounding $\E[\|u_{\widetilde{\Pi}(p)}-u_H\|_{\LF}]$ for $\widetilde{\Pi}(p)$ the Bernoulli$(p)$ nonempty point process of a semimetric space $H$. In order to do so, we need two lemmas.

\begin{lemma} \label{lem:sparse1}
Let $(H,d)$ be a finite, nonempty semimetric space and $\Pi \sbs H$ a (deterministic) nonempty subset. Let $R<\infty$ be such that $\max_{x\in H}|B_R(x)| < \frac{|H|}{2|\Pi|}$, where $B_R(x) = \{y\in H: d(x,y)<R\}$. Then there exists $Y \sbs H$ such that $|Y| > \frac{1}{2}|H|$ and $\dist(\Pi,Y) \geq R$.
\end{lemma}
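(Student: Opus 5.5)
The natural candidate is $Y := H \setminus \bigcup_{x\in\Pi} B_R(x)$, the set of points not within distance $R$ of $\Pi$. I will check the two required properties directly.

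\emph{Distance.} For every $y\in Y$ and every $x\in\Pi$ we have $y\notin B_R(x)$, which by definition of the open ball means $d(x,y)\geq R$. Since $\Pi$ is finite, taking the minimum over $x\in\Pi$ gives $\dist(\Pi,y)\geq R$ for each $y\in Y$, and hence $\dist(\Pi,Y)\geq R$. If $Y$ turns out to be nonempty this is an honest minimum; emptiness is excluded by the next step anyway. Only the definition of $B_R$ is used here, so working with a semimetric rather than a metric causes no difficulty. Note that zero-distance points are automatically inside the balls, since $d(x,y)=0<R$ as soon as $R>0$.

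\emph{Cardinality.} By a union bound,
\begin{equation*}
\Bigl|\bigcup_{x\in\Pi}B_R(x)\Bigr| \leq \sum_{x\in\Pi}|B_R(x)| \leq |\Pi|\max_{x\in H}|B_R(x)| < |\Pi|\cdot\frac{|H|}{2|\Pi|} = \frac{|H|}{2}.
\end{equation*}
Hence $|Y| > |H|-\frac{|H|}{2} = \frac{1}{2}|H|$. In particular $Y$ is nonempty.

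There is no real obstacle in this argument. The only points needing care are the strictness of the inequality, which comes directly from the strict hypothesis, and the convention that $B_R$ is the open ball; this convention is exactly what yields $d\geq R$ rather than $d>R$. If $R\leq 0$, every ball $B_R(x)$ is empty. Then $Y=H$ and the distance conclusion holds trivially, since distances are nonnegative.
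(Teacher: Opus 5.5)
Your proposal is correct and matches the paper's proof: the paper takes $Y := \{y\in H: \dist(\Pi,y)\geq R\}$, which is exactly your $H\setminus\bigcup_{x\in\Pi}B_R(x)$. It then bounds $|H\setminus Y|$ by the same union bound over the balls $\{B_R(x)\}_{x\in\Pi}$.
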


\begin{proof}
 Set $Y := \{y\in H: \dist(\Pi,y) \geq R\}$. We will show that $|H\setminus Y| < \frac{1}{2}|H|$, from which the conclusion will follow. Note that $H \setminus Y$ can be covered by $\{B_{R}(x)\}_{x\in \Pi}$. Using this, together with the definition of $R$, we get
$$|H\setminus Y| \leq \sum_{x\in \Pi}|B_{R}(x)| < |\Pi|\tfrac{|H|}{2|\Pi|} = \tfrac{1}{2}|H|.$$
\end{proof}

\begin{lemma} \label{lem:sparse2} 
Let $H,\Pi,R$ be as in the statement of Lemma~\ref{lem:sparse1}. Then
$$\|u_\Pi-u_H\|_{\LF} \geq \tfrac{1}{2}R.$$ 
\end{lemma}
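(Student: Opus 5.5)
The plan is to lower bound the free-space norm by testing $u_\Pi-u_H$ against a single 1-Lipschitz function, namely the distance-to-set function $f(x):=\dist(\Pi,x)$. As recorded in \S\ref{ss:LF}, this function is 1-Lipschitz even on a semimetric space, so by the definition of $\|\cdot\|_{\LF}$ we get
$$\|u_\Pi-u_H\|_{\LF} \geq \left|\langle f,u_\Pi-u_H\rangle\right|.$$

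Next we evaluate the two terms. Since $f$ vanishes on $\Pi$, we have $\langle f,u_\Pi\rangle = 0$, and so $|\langle f,u_\Pi-u_H\rangle| = \langle f,u_H\rangle = \frac{1}{|H|}\sum_{x\in H}\dist(\Pi,x)$.

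To bound this average from below, apply Lemma~\ref{lem:sparse1} to obtain $Y\sbs H$ with $|Y|>\frac12|H|$ and $\dist(\Pi,y)\geq R$ for every $y\in Y$. Since $f\geq 0$ everywhere, we may discard the points of $H\setminus Y$, which gives
$$\langle f,u_H\rangle \geq \frac{1}{|H|}\sum_{y\in Y}\dist(\Pi,y) \geq \frac{|Y|}{|H|}R > \tfrac12 R.$$

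There is no real obstacle here. The only points needing care are that distance-to-set functions remain 1-Lipschitz in the semimetric setting, and that $f\geq0$, which justifies dropping the terms off $Y$. Both are immediate.
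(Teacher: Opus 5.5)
Your proposal is correct and is essentially the paper's own argument: test $u_\Pi-u_H$ against the 1-Lipschitz function $\dist(\Pi,\cdot)$, which vanishes on $\Pi$, is nonnegative, and is at least $R$ on the set $Y$ supplied by Lemma~\ref{lem:sparse1}. The only nit is that your final strict inequality $\frac{|Y|}{|H|}R>\frac12 R$ uses $R>0$; when $R\le 0$ the claim holds trivially because the norm is nonnegative.
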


\begin{proof}
By Lemma~\ref{lem:sparse1}, we can find $Y \sbs H$ such that $|Y| > \frac{1}{2}|H|$ and $\dist(\Pi,Y) \geq R$. Let $f: H \to \R$ be the $1$-Lipschitz function given by $f(y) = \dist(\Pi,y)$ for all $y\in H$. Then $f\geq0$, $f\big|_{\Pi}=0$, and $f\big|_{Y}\geq R$. Therefore, we have
$$\|u_\Pi-u_H\|_{\LF} \geq \int f du_H - \int fdu_\Pi \geq \int_Y f du_H \geq Ru_H(Y) \geq \tfrac{1}{2}R.$$ 
\end{proof}

We can now obtain our desired lower bound on $\E[\|u_{\widetilde{\Pi}(p)}-u_H\|_{\LF}]$.

\begin{proposition} \label{prop:LF(Bernoulli-u)}
Let $H$ be a finite, nonempty semimetric space with $|H|\geq e^4$ and $p\in [\frac{8\log|H|}{|H|},1]$. Let $\widetilde{\Pi}(p)$ be the Bernoulli$(p)$ nonempty point process of $H$, and let $R<\infty$ be such that $\max_{x\in H}|B_R(x)| < \frac{1}{4p}$. Then
$$\E\|u_{\widetilde{\Pi}(p)}-u_H\|_{\LF} \geq \tfrac{1}{3}R.$$
\end{proposition}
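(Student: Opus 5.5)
The plan is to combine Lemma~\ref{lem:sparse2} with a bound showing that $|\widetilde{\Pi}(p)|$ is not too large with reasonable probability. Lemma~\ref{lem:sparse2} says that whenever the (deterministic) nonempty set $\Pi$ satisfies $\max_{x}|B_R(x)| < \frac{|H|}{2|\Pi|}$, we have $\|u_\Pi-u_H\|_{\LF}\geq \frac12 R$. By hypothesis $\max_x|B_R(x)|<\frac{1}{4p}$, so the condition of Lemma~\ref{lem:sparse2} holds on the event $\{|\Pi|\leq 2p|H|\}$, since then $\frac{|H|}{2|\Pi|}\geq \frac{1}{4p}$. Hence
\[
\E\|u_{\widetilde{\Pi}(p)}-u_H\|_{\LF} \geq \tfrac12 R\,\widetilde{\P}_p\bigl(|\widetilde{\Pi}(p)|\leq 2p|H|\bigr),
\]
and it suffices to show that this probability is at least $\frac23$.

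To estimate it, I would pass to the unconditioned process $\Pi(p)$. We have $\widetilde{\P}_p(|\widetilde\Pi|>2p|H|) = \P_p(|\Pi(p)|>2p|H|)/(1-(1-p)^{|H|})$, because the event excludes the empty set. For the numerator I would use a Chernoff upper-tail bound: $\P(|\Pi(p)|>2p|H|)\leq e^{-p|H|/3}$, which is the standard $(1+\delta)$ bound with $\delta=1$. Since $p|H|\geq 8\log|H|\geq 32$ (using $|H|\geq e^4$), this is at most $e^{-32/3}$, which is tiny. Even Markov's inequality would give only $\frac12$, which is not enough, so Chernoff or Chebyshev is needed. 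Chebyshev works as well: the probability is at most $\frac{p(1-p)|H|}{(p|H|)^2}\leq \frac{1}{p|H|}\leq\frac1{32}$. The denominator is at least $1-e^{-32}$, as already noted in the proof of Lemma~\ref{lem:moments}. Altogether the bad event has $\widetilde{\P}_p$-probability at most $\frac{1}{32}\cdot\frac{9}{8}<\frac13$, so the good event has probability at least $\frac23$. This gives $\E\|u_{\widetilde\Pi(p)}-u_H\|_{\LF}\geq \frac12 R\cdot\frac23=\frac13 R$.

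The only delicate step is keeping the constants aligned. The hypothesis uses $\frac{1}{4p}$, while Lemma~\ref{lem:sparse2} needs $\frac{|H|}{2|\Pi|}$; these match exactly at the threshold $|\Pi|\leq 2p|H|$. On the semimetric side nothing changes, since Lemma~\ref{lem:sparse2} is already stated for semimetric spaces.
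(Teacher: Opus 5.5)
Your proposal is correct and follows essentially the same route as the paper. Restrict to the event $|\widetilde{\Pi}(p)|\le 2p|H|$, where the hypothesis of Lemma~\ref{lem:sparse2} holds; bound the complementary event via $\widetilde{\P}_p\le(1-e^{-32})^{-1}\P_p$ and the Chernoff bound $e^{-p|H|/3}$; and conclude with $\frac23\cdot\frac12 R=\frac13 R$. Your Chebyshev alternative, which bounds the bad event by $\frac{1}{p|H|}\le\frac{1}{32}$, also works.
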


\begin{proof}
It is easy to see that the hypotheses $p \geq \frac{8\log|H|}{|H|} > \frac{32}{|H|}$ implies that $\widetilde{\P}_p(\Pi) \leq (1-e^{-32})^{-1}\P_p(\Pi)$ for all $\Pi\sbs \{0,1\}^H$. Using this together with a Chernoff bound yields
\begin{align*}
    \P(\widetilde{\Pi}(p) > 2p|H|) &\leq (1-e^{-32})^{-1} \P(\Pi(p) > 2p|H|) \\
    &\leq (1-e^{-32})^{-1} e^{-\frac{1}{3}p|H|} \\
    &\leq (1-e^{-32})^{-1} e^{-\frac{32}{3}} < \tfrac13.
\end{align*}
Hence, $\P\left(\frac{|H|}{2|\widetilde{\Pi}(p)|} > \tfrac{1}{4p}\right) > \tfrac23$. Thus, by Lemma~\ref{lem:sparse2}, we get 

\begin{align*}
    \E\|u_{\widetilde{\Pi}(p)}-u_H\|_{\LF} &\geq (\tfrac23)\tfrac{1}{2}R = \tfrac13 R.
\end{align*}
\end{proof}

We now arrive at the main result of this subsection, which proves $(5)\implies(6)$ from Theorem~\ref{thm:mainextended} by contrapositive.

\begin{proposition} \label{prop:c1(LF(almostexpanders))}
Let $(H_k)_k$ be a bounded degree sequence of graphs and $(d_k)_k$ a sequence of semimetrics on $H_k$ such that
\begin{enumerate}
    \item $|H_k|\to\infty$,
    \item there exists a sequence of probabilities $(p_k)_k$ such that $p_k\to 0$ and $\sup_k C(\widetilde{\Pi}(p_k),H_k) < \infty$,
    \item there exists $L<\infty$ such that $\sup_k \max_{x\sim y \in H_k}d_k(x,y) \leq L$, and
    \item for all $R<\infty$, we have $\sup_{k}\max_{x\in H_k}|\{y\in H_k: d_k(x,y)< R\}| <\infty$.
\end{enumerate}
Then the linear $\{\ell_1^m\}_{m=1}^\infty$-distortion of $\LF(H_k,d_k)$ tends to $\infty$ as $k\to\infty$.
\end{proposition}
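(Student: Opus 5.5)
Take a linear $D$-isomorphic embedding $T:\LF(H_k,d_k)\to\ell_1^m$ and aim for a lower bound on $D$ that tends to infinity with $k$. Rescale $T$ so that it is noncontractive with $\|T\|\le D$. In coordinates, $T\mu=(\lambda_j(\mu))_{j=1}^m$ and $\|T\mu\|_1=\sum_j|\lambda_j(\mu)|$. By the representation fact at the end of \S\ref{ss:LF}, each functional $\lambda_j$ is represented by some $f_j:H_k\to\R$, so that $\lambda_j(\mu)=\langle f_j,\mu\rangle$.

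The first step uses the upper bound. For every edge $x\sim y$ we have
$$\sum_j|f_j(x)-f_j(y)|=\|T(\delta_x-\delta_y)\|_1\le D\,d_k(x,y)\le DL$$
by hypothesis (3). Summing over edges and using bounded degree $r$ gives
$$\sum_j\|f_j\|_{W^{1,1}}\le \frac{DL\cdot r|H_k|/2}{|H_k|}=\tfrac12 rLD.$$

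The second step uses noncontractivity together with the Poincaré inequality for the random measures $\mu=u_{\widetilde\Pi(p_k)}-u_{H_k}$. Let $C_0=\sup_k C(\widetilde\Pi(p_k),H_k)$. Then
$$\E\|u_{\widetilde\Pi}-u_H\|_{\LF}\le \E\|T(u_{\widetilde\Pi}-u_H)\|_1=\sum_j\E|\langle f_j,u_{\widetilde\Pi}-u_H\rangle|\le C_0\sum_j\|f_j\|_{W^{1,1}}\le \tfrac12 C_0rLD.$$
(If some $f_j$ is constant, its term vanishes.)

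The last step lower bounds the left-hand side with Proposition~\ref{prop:LF(Bernoulli-u)}. Its hypotheses hold for large $k$: first, $|H_k|\ge e^4$ by (1). Second, we need $p_k\ge 8\log|H_k|/|H_k|$; if this fails, replace $p_k$ by $\max\{p_k,8\log|H_k|/|H_k|\}$. This still tends to $0$, but one must check that the Poincaré constant stays bounded under this change. The alternative is to argue that the hypothesis already forces it, since for $p_k|H_k|$ small the process is essentially a singleton process and a bounded Poincaré constant would give expanders. This point is the main technical obstacle, and the place where I would first try to prove monotonicity in $p$ of the Poincaré constant, or simply note that the sequence produced in Theorem~\ref{thm:almostexpander=>Poincare} already satisfies the lower bound.

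Given this, fix $R$. By (4), $M_R:=\sup_k\max_x|B_R(x)|<\infty$, and since $p_k\to0$ we have $M_R<1/(4p_k)$ for all large $k$. Proposition~\ref{prop:LF(Bernoulli-u)} then gives $\E\|u_{\widetilde\Pi}-u_H\|_{\LF}\ge R/3$. Hence $D\ge 2R/(3C_0rL)$ for all large $k$, and since $R$ was arbitrary, the linear distortion tends to infinity.
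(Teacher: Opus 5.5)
Your chain of inequalities is the same as the paper's. You represent the coordinate functionals by $f_j$, combine noncontractivity with the Poincar\'e inequality for $u_{\widetilde{\Pi}(p_k)}-u_{H_k}$, bound $\sum_j\|f_j\|_{W^{1,1}}\le\frac12 rLD$ using (3) and bounded degree, and lower bound $\E\|u_{\widetilde{\Pi}(p_k)}-u_{H_k}\|_{\LF}$ via Proposition~\ref{prop:LF(Bernoulli-u)}.

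The gap is the step you flag but do not close. Proposition~\ref{prop:LF(Bernoulli-u)} needs $p_k\ge 8\log|H_k|/|H_k|$, and this hypothesis is essential. If $p<1/(4|H|)$, every $R$ satisfies $\max_x|B_R(x)|<\frac{1}{4p}$, while $\|u_{\Pi}-u_H\|_{\LF}\le\diam H$. The paper's own proof also applies the proposition at this point ``for all sufficiently large $k$'' without checking this condition.

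Your suggestion that the hypotheses force the lower bound is false. The expander observation you make is consistent with (1)--(4). For an expander sequence with the graph metric, permutation invariance of $\widetilde{\P}_p$ gives $\E u_{\widetilde{\Pi}(p)}=u_H$. Hence $\E|\langle f,u_{\widetilde{\Pi}(p)}-u_H\rangle|\le\langle|f-\langle f,u_H\rangle|,u_H\rangle\lesssim\|f\|_{W^{1,1}}$ for every $p$, so $p_k=|H_k|^{-2}$ is admissible. Appealing to the $p_k$ of Theorem~\ref{thm:almostexpander=>Poincare} proves only a weaker statement, which would not give $(5)\Rightarrow(6)$ as stated. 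Monotonicity of $p\mapsto C(\widetilde{\Pi}(p),H)$ would need its own proof, which you do not supply.

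The simplest repair leaves $p_k$ untouched and replaces the Chernoff step in Proposition~\ref{prop:LF(Bernoulli-u)} by Markov's inequality, which works for every $p\in(0,1)$. With $n=|H|$, one has $\E|\widetilde{\Pi}(p)|=np/(1-(1-p)^n)\le2\max\{1,np\}$. This holds because $1-(1-p)^n\ge1-e^{-np}$, which is at least $np/2$ when $np\le1$ and at least $\frac12$ when $np\ge1$. Hence with probability at least $\frac12$ we have $|\widetilde{\Pi}(p)|<4\max\{1,np\}$, so $\frac{n}{2|\widetilde{\Pi}(p)|}>\frac18\min\{n,1/p\}$. Lemma~\ref{lem:sparse2} then yields $\E\|u_{\widetilde{\Pi}(p)}-u_H\|_{\LF}\ge R/4$ whenever $\max_x|B_R(x)|\le\frac18\min\{n,1/p\}$.

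For fixed $R$, hypotheses (1), (4) and $p_k\to0$ make this hold for all large $k$. Your chain then gives $D\ge R/(2C_0rL)$, so the distortion tends to $\infty$. This also covers the intermediate regime $1\lesssim p_k|H_k|<8\log|H_k|$, where neither the original proposition nor an expander argument applies directly.
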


\begin{proof}
Let $(m_k)_k$ be a sequence in $\N$ and $(T_k: \LF(H_k,d_k) \to \ell_1^{m_k})_k$ a sequence of noncontractive linear maps, with coordinate functionals $T_k = (T_k^i)_{i=1}^{m_k}$. We will show that the operator norm $\|T_k\|$ tends to $\infty$ as $k\to\infty$, which will prove the proposition.

For each $k$, let $R_k<\infty$ be such that $\max_{x\in H_k}|\{y\in H_k: d_k(x,y) < R_k\}| < \frac{1}{4p_k}$. Since $p_k\to 0$, hypothesis (4) implies that $R_k$ can be chosen to tend to $\infty$, and we make this choice. Let $r\in\N$ such that for all $k$, each vertex of $H_k$ has degree at most $r$.

For each $k$ and $i \leq m_k$, let $f_k^i: H_k \to \R$ be a function representing the linear coordinate functional $T_k^i: \LF(H_k,d_k) \to \R$ (see \S\ref{ss:LF}). By hypothesis (2), we have that there exists $C<\infty$ such that
\begin{equation} \label{eq:Poincare2}
    \E|\langle f_k^i, u_{\widetilde{\Pi}(p_k)}-u_{H_k}\rangle| \leq \frac{C}{|H_k|}\sum_{x\sim y\in H_k} |f_k^i(y)-f_k^i(x)|
\end{equation}
for all $k$ and all $i$. Then for all sufficiently large $k$, we have that
\begin{align*}
    \tfrac{1}{3}R_k &\overset{\text{Prop }\ref{prop:LF(Bernoulli-u)}}{\leq} \E\|u_{\widetilde{\Pi}(p_k)}-u_{H_k}\|_{\LF} \\
    &\leq \E\|T_k(u_{\widetilde{\Pi}(p_k)}-u_{H_k})\|_1 \\
    &= \E\sum_i|T_k^i(u_{\widetilde{\Pi}(p_k)}-u_{H_k})| \\
    &= \sum_i\E|T_k^i(u_{\widetilde{\Pi}(p_k)}-u_{H_k})| \\
    &= \sum_i\E|\langle f_k^i, u_{\widetilde{\Pi}(p_k)}-u_{H_k}\rangle| \\
    &\overset{\eqref{eq:Poincare2}}{\leq} \sum_i \frac{C}{|H_k|}\sum_{x\sim y\in H_k} |f_k^i(y)-f_k^i(x)| \\
    &= \frac{C}{|H_k|}\sum_{x\sim y\in H_k} \sum_i|f_k^i(y)-f_k^i(x)| \\
    &= \frac{C}{|H_k|}\sum_{x\sim y\in H_k} \sum_i|T_k^i(\delta_y-\delta_x)| \\
    &= \frac{C}{|H_k|}\sum_{x\sim y\in H_k}\|T_k(\delta_y-\delta_x)\|_1 \\
    &\leq \frac{C}{|H_k|}\sum_{x\sim y\in H_k}\|T_n\|\|\delta_y-\delta_x\|_{\LF} \\
    &= \frac{C}{|H_k|}\sum_{x\sim y\in H_k}\|T_k\|d_k(x,y) \\
    &\leq \frac{CL}{|H_k|}\sum_{x\sim y\in H_k}\|T_k\|L \\
    &\leq \frac{CLr}{2}\|T_k\|.
\end{align*}
This inequality shows that $\|T_k\| \gtrsim R_k \to \infty$ as $k\to\infty$.
\end{proof}

With the help of Lemma~\ref{lem:linearreduction}, we can upgrade the previous proposition from nonembeddability of Lipschitz free spaces to nonembeddability of Wasserstein-1 spaces.

\begin{theorem} \label{thm:Poincare=>L1distortion}
Let $X$ be a connected, bounded degree graph equipped with the shortest path metric. If $X$ contains a sequence of induced subgraphs $(H_k)_k$ such that $|H_k|\to\infty$ and $\sup_k C(\widetilde{\Pi}(p_k),H_k) < \infty$ for some sequence of probabilities $(p_k)_k$ with $p_k\to 0$, then for any metric space $Y$ coarsely containing $X$, the Wasserstein space $W_1(Y)$ fails to biLipschitzly embed into $L_1$.
\end{theorem}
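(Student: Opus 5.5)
Argue by contradiction. Suppose $f: X \to Y$ is a coarse embedding with controls $\rho_1,\rho_2$, and suppose $W_1(Y)$ admits a $D$-biLipschitz embedding into $L_1$. The plan is to transfer the problem to semimetrics on the finite graphs $H_k$ and then apply Proposition~\ref{prop:c1(LF(almostexpanders))} together with Lemma~\ref{lem:linearreduction}.

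\textbf{Step 1: pull back the metric.} On each $H_k$ define the semimetric $d_k(x,y) := d_Y(f(x),f(y))$. This is only a semimetric, because $f$ need not be injective, which is why the paper allows semimetrics. The pushforward map $\delta_x \mapsto \delta_{f(x)}$ extends to an isometry from $W_1(H_k,d_k)$ onto $W_1(f(H_k)) \sbs W_1(Y)$. For this one uses that the quotient $H_k/\!\sim$ is isometric to $f(H_k)$, and that McShane extension makes the inclusion $W_1(f(H_k)) \sbs W_1(Y)$ isometric. Hence every $W_1(H_k,d_k)$ has $L_1$-distortion at most $D$. By Lemma~\ref{lem:linearreduction}, the linear $\{\ell_1^m\}_{m=1}^\infty$-distortion of $\LF(H_k,d_k)$ is then at most $D$ for every $k$.

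\textbf{Step 2: check the hypotheses of Proposition~\ref{prop:c1(LF(almostexpanders))}.}
\begin{itemize}
\item Hypothesis (1) is given.
\item Hypothesis (2) is given. The Poincar\'e constant depends only on the graph structure of $H_k$, not on the metric.
\item Hypothesis (3): edges $x\sim y$ of $H_k$ satisfy $d_X(x,y)=1$, so $d_k(x,y)\le \rho_2(1)=:L$.
\item Hypothesis (4): if $d_k(x,y)<R$, then $\rho_1(d_X(x,y))<R$. Since $\rho_1\to\infty$, this forces $d_X(x,y)\le S(R)$ for some $S(R)<\infty$. By bounded degree (degree at most $r$), the $d_X$-ball of radius $S(R)$ has at most $r^{S(R)+1}$ points. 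This gives a bound uniform in $k$ and $x$.
\end{itemize}

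\textbf{Step 3: conclude.} Proposition~\ref{prop:c1(LF(almostexpanders))} now says the linear distortion of $\LF(H_k,d_k)$ tends to $\infty$. This contradicts the uniform bound $D$ from Step 1.

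The main subtlety is Step 1: making sure the isometry between $W_1(H_k,d_k)$ and a subset of $W_1(Y)$ is legitimate in the semimetric setting. It follows from the quotient argument in the proof of Lemma~\ref{lem:linearreduction} together with McShane extension. Everything else is routine bookkeeping with the coarse control functions.
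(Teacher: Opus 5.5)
Your proposal is correct and follows essentially the same route as the paper. You pull back $d_Y$ to semimetrics $d_k$ on the $H_k$, check the hypotheses of Proposition~\ref{prop:c1(LF(almostexpanders))} from the coarse control functions and bounded degree, and transfer to $W_1(Y)$ via Lemma~\ref{lem:linearreduction} and the isometric embedding $W_1(H_k,d_k)\hookrightarrow W_1(Y)$. The only differences are cosmetic: you argue by contradiction, and you spell out the quotient/McShane justification of that isometric embedding, which the paper leaves implicit.
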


\begin{proof}
Assume that $X$ contains a sequence of induced subgraphs $(H_k)_k$ such that $|H_k|\to\infty$ and $\sup_k C(\widetilde{\Pi}(p_k),H_k) < \infty$ for some sequence of probabilities $(p_k)_k$ with $p_k\to 0$. Let $f: X \to Y$ be a coarse embedding into a metric space $(Y,d_Y)$. Consider the pullback semimetric $d_f$ on $X$ given by $d_f(x,y) := d_Y(f(x),f(y))$. Since $X$ is a bounded degree graph and $f$ is a coarse embedding, it can be seen that $d_f$ satisfies the properties
\begin{enumerate}
    \item there exists $C<\infty$ such that $\sup_{x\sim y \in X}d_f(x,y) \leq C$ and
    \item for all $R<\infty$, we have $\sup_{x\in X}|\{y\in X: d_f(x,y)< R\}| <\infty$.
\end{enumerate}
Note that, by construction, $f: (X,d_f) \to Y$ is an isometric embedding.

Let $d_k$ denote the restriction of $d_f$ to $H_k$. Then the above two properties of $d_f$ imply that the hypotheses of Proposition~\ref{prop:c1(LF(almostexpanders))} are satisfied for $(d_k)_k$, and hence the linear $\{\ell_1^m\}_{m=1}^\infty$-distortion of $\LF(H_k,d_k)$ tends to $\infty$. Since the linear $\{\ell_1^m\}_{m=1}^\infty$-distortion of $\LF(H_k,d_k)$ is the same as the biLipschitz $L_1$-distortion of $W_1(H_k,d_k)$ (Lemma~\ref{lem:linearreduction}) and since each $(H_k,d_k)$ isometrically embeds into $Y$, we get that $W_1(Y)$ does not biLipschitzly embed into $L_1$.
\end{proof}

\bibliographystyle{alpha}

\end{document}